\documentclass[journal]{IEEEtran}
\usepackage[numbers,sort&compress]{natbib}

\usepackage{algorithm}
\usepackage{algpseudocode}

\usepackage{pgf}
\usepackage{tikz}
\usetikzlibrary{arrows, decorations.pathmorphing, backgrounds, positioning, fit, petri, automata}
\definecolor{yellow1}{rgb}{1,0.8,0.2}

\usepackage{makecell}

\usepackage{authblk}
\usepackage[T1]{fontenc}
\usepackage{inputenc}

\usepackage{amsmath,amsthm,amsfonts,amssymb,amsbsy,amsopn,amstext}
\usepackage{graphicx,color}
\usepackage{mathbbol,mathrsfs}
\usepackage{float,ccaption}
\usepackage{indentfirst}
\usepackage{subfigure}

\usepackage{cuted}

\usepackage{booktabs}
\usepackage{threeparttable}
\usepackage{multirow}
\usepackage{diagbox}

\usepackage{epstopdf}

\ifCLASSINFOpdf
\else
\fi
\newtheorem{thm}{Theorem}

\newtheorem{lem}{Lemma}

\newtheorem{defn}{Definition}

\newtheorem{ass}{Assumption}

\begin{document}
%

\title{Distributed TD Tracking with Linear Function Approximation over Directed Communication Networks}
%
%
%

\author{Haocheng Yang${}^\dagger$,~\IEEEmembership{}\thanks{}
	Shengchao Zhao${}^\dagger$,~\IEEEmembership{}\thanks{${}^\dagger$Haocheng Yang and Shengchao Zhao contributed equally to this work.\\ ${~~~}^*$Corresponding author.} 
	Yongchao Liu${}^*$
	\thanks{Haocheng Yang is with the School of Mathematical Sciences, Dalian University of Technology, Dalian 116024, China (e-mail: yhc@mail.dlut.edu.cn)

Shengchao Zhao is with the School of Mathematics, China University of Mining and Technology, Xuzhou, 221116, China (e-mail: zhaosc@cumt.edu.cn). 

Yongchao Liu is with the School of Mathematical Sciences, Dalian University of Technology, Dalian 116024, China (e-mail: lyc@dlut.edu.cn). }}
\maketitle

\begin{abstract}
We study the policy evaluation problem in multi-agent reinforcement learning (MARL)  over directed communication networks, where  agents cooperate with each other to explore an unknown environment and accomplish a specific task. We propose a Push-Pull-type distributed algorithm, named PP-DTD, for policy evaluation in MARL within the framework of temporal difference (TD) learning with linear function approximation.  PP-DTD integrates TD learning with the Push-Pull mechanism to accommodate directed communication networks, and further utilize variance reduction techniques to enhance both algorithmic stability and convergence rate. We show that PP-DTD achieves linear convergence to a neighborhood of the optimum under constant step-sizes and a convergence rate of $\mathcal{O}({T^{-1}})$ under decaying step-sizes when the sample is independent and identically distributed  or  Markovian.  
To the best of our knowledge, PP-DTD is the first distributed algorithm for  policy evaluation in MARL over directed graphs that achieves a comparable convergence rate to single-agent TD. The numerical experiments on cooperative navigation tasks demonstrate the robustness and effectiveness of PP-DTD.
\end{abstract}


%
\IEEEpeerreviewmaketitle

\section{Introduction}

\IEEEPARstart{r}{e}inforcement Learning (RL) is a standard paradigm for solving sequential decision-making problems, which  learns an optimal policy to maximize expected cumulative rewards through interactions with an environment. RL has achieved widespread success in  high-dimensional visual control \cite{mnih2015human}, strategic game playing \cite{silver2016mastering}, energy management \cite{sivamayil2023systematic}, etc. The increasing complexity of real-world applications, such as decentralized coordination of cooperative robotics \cite{huttenrauch2019deep}, evaluation of complex strategies in game theory \cite{vinyals2019grandmaster}, and smart factories \cite{bahrpeyma2022review}, has motivated researchers to consider Multi-Agent Reinforcement Learning (MARL). In MARL, multiple agents interact within a shared environment and execute actions simultaneously based on local policies, with global state transitions determined by joint actions.  

A fundamental issue in MARL is to evaluate the agents' policies in terms of their long-term discounted cumulative reward. Temporal-difference (TD) learning \cite{sutton1988learning} provides an efficient and practical framework for solving this problem. 
Among TD-based algorithms for MARL, distributed algorithms that employ TD learning with linear function approximation \cite{sutton1998reinforcement} can effectively handle large state spaces in practice and have been  extensively studied \cite{mathkar2016distributed,stankovi2016multi,doan2019finite,sun2020finite,chen2022multi,wang2020decentralized}.  Mathkar and Borkar \cite{mathkar2016distributed} propose the  distributed gossiping TD(0) algorithm, and provide the almost sure convergence for the proposed algorithm by 
ODE approach.  Stanković and Stanković \cite{stankovi2016multi} propose distributed variants of the TD-learning algorithms GTD2 and TDC \cite{sutton2009fast} over time-varying communication networks, and establish weak convergence of the proposed algorithms under Independent and Identically Distributed (i.i.d.) setting. To reduce sampling variance and communication frequency, Chen et al. \cite{chen2022multi} further develop a mini-batch sampling variant of the distributed TDC algorithm, which achieves a near-optimal sample complexity of $\widetilde{\mathcal{O}}(\varepsilon^{-1})$ for obtaining an $\varepsilon$-accurate solution under the Markovian setting. Doan et al. \cite{doan2019finite} propose a distributed TD(0) algorithm with projection technology over time-varying networks, and provide its convergence rate of $\tilde{\mathcal{O}}(T^{-1})$ under  i.i.d. setting. Subsequently, Sun \cite{sun2020finite} et al. further analyze distributed TD(0) algorithm and prove its asymptotic and nonasymptotic convergence under both i.i.d. and Markovian settings, even without the projection step. To address data heterogeneity, Wang et al. \cite{wang2020decentralized} incorporate Gradient Tracking (GT) into decentralized TD(0) and establish linear convergence to a neighborhood of the
optimum  under both i.i.d. and Markovian settings. Additionally, recent works explore TD($\lambda$) learning with linear function approximation for MARL \cite{stankovic2023distributed,doan2021finite,zhu2024decentralized,wu2021byzantine}.  Doan et al. \cite{doan2021finite} propose a distributed consensus-based variant of the popular TD($\lambda$) algorithm, which achieves linear convergence to a neighborhood of the optimum for constant step-sizes, as well as the convergence rate of $\mathcal{O}(T^{-1})$  for decaying step-sizes under the Markovian setting. Stanković et al. \cite{stankovic2023distributed} develop distributed variants of the  algorithms TD($\lambda$) and ETD($\lambda$), and show that the proposed algorithms converge weakly to the solutions of a derived mean ODE. 
Zhu et al. \cite{zhu2024decentralized} propose  a distributed adaptive TD($\lambda$) algorithm by incorporating AMSGRAD \cite{Sashank2018Adam} into the distributed
variant of TD($\lambda$) learning, which achieves linear convergence to  a neighborhood of the
optimum. To address robustness issues in the presence of malicious agents, Wu et al. \cite{wu2021byzantine} propose a trimmed mean‑based Byzantine‑resilient decentralized TD($\lambda$) algorithm, which converges to a neighborhood of a stationary point.

Indeed, the works mentioned above focus on  policy evaluation in MARL over undirected communication networks.  In a realistic network, especially with mobile agents such as autonomous vehicles, drones, or robots \cite{lin2024finite}, asymmetric transmission capabilities among agents often dictate directed network topologies for MARL. More recently, Lin et al.~\cite{lin2024finite} propose a push-sum-type distributed TD algorithm, Push-SA, over directed communication networks, and establish a finite-time bound that asymptotically converges to zero when the sample is Markovian. We contribute to policy evaluation in MARL over directed communication networks by developing a Push-Pull-type distributed TD algorithm and showing that it may achieve a comparable convergence rate   to single-agent TD \cite{bhandari2018finite} under both i.i.d. and Markovian settings. As far as we are concerned, our main contributions are summarized as follows:
\begin{itemize}
	\item [($i$)] We propose a Push-Pull-type distributed TD algorithm, PP-DTD, for evaluating the policy of MARL over directed communication networks, which integrates TD learning with the Push-Pull mechanism \cite{pu2020push} and a hybrid variance reduction technique \cite{cutkosky2019momentum}. Compared with Push-SA \cite{lin2024finite}, PP-DTD uses a row-stochastic matrix for mixing the parameter vector to be learned, and a column-stochastic matrix for tracking the average gradients, which eliminates the need to know the in-degree information of neighboring agents, thereby offering enhanced flexibility. Moreover, PP-DTD employs the hybrid variance reduction technique to improve both algorithmic stability and convergence rate.
	\item [($ii$)] For both i.i.d. and Markovian settings, PP-DTD achieves linear convergence to a neighborhood of the optimum under constant step-sizes, and a convergence rate of $\mathcal{O}(T^{-1})$ to the optimum under decaying step-sizes. The convergence rate of $\mathcal{O}(T^{-1})$   matches that of the standard TD learning algorithm in single-agent RL \cite{bhandari2018finite}.
	We evaluate the performance of our algorithm through extensive numerical simulations on cooperative navigation tasks under both \text{i.i.d.} and Markovian settings. The empirical results  demonstrate the robustness and effectiveness of PP-DTD.
\end{itemize}

The rest of this paper is organized as follows. Section \ref{sec:problem-alg} introduces the problem formulation of  policy evaluation in MARL and describes the PP-DTD algorithm. Section \ref{sec:main_results} establishes the convergence rate of the proposed algorithms under both i.i.d. and Markovian sampling settings. Finally, Section \ref{sec:numerical} presents numerical results that validate the effectiveness of the proposed algorithm.

$\mathbf{Notation.}$ Throughout this paper, we use the following notation. Denote $\mathbb{R}^d$, $\mathbf{1}$ and $\mathbf{I}$ as the $d$ dimensional Euclidean space, the vector of ones and the identity matrix respectively. 
$B= \text{diag}(\mathbf{b}) \in \mathbb{R}^{l \times l}$ denotes the diagonal matrix whose elements are given by the entries of vector $\mathbf{b}\in \mathbb{R}^{l}$. $\|\cdot\|_2$ denotes the $\ell_2$-norm for vector and matrix; $\|\cdot\|$ represents the Frobenius norm of matrix. For any two positive sequences $\{a_k\}$ and $\{b_k\}$, we say $a_k=\mathcal{O}(b_k)$ if there exists a positive constant $c$ such that $a_k\leq cb_k$. $\mathcal{G}=(\mathcal{V},\mathcal{E})$ denotes a directed communication graph, where $\mathcal{V}=\{1,2,\cdots,n\}$ is the vertex set and $\mathcal{E}\subseteq \mathcal{V}\times \mathcal{V}$ is the set of edges $(i,j)$ such that node $j$ can send information to node $i$. A directed communication graph is said to be strongly connected if there exists a directed path between any two nodes.

\section{Problem formulation and algorithm}\label{sec:problem-alg}
Consider MARL that a group of $n$ agents cooperate to evaluate the value function over a strongly connected directed communication graph $\mathcal{G}$ and each agent $i$ locally follows a stationary policy $\pi^i$. Mathematically, MARL can be modeled as a multi-agent Markov decision process (MDP) with the 6-tuple \cite{wang2020decentralized}:
$$(\mathcal{S}, \{ \mathcal{A}^i \}_{i=1}^n, p, \{ r^i \}_{i=1}^n, \gamma, \mathcal{G}),$$
where 
$\mathcal{S}$ denotes the finite global state space shared by all agents, $\mathcal{A}^i$ is a finite set of actions available to agent $i$, $p:\mathcal{S}\times\mathcal{A}\rightarrow \Delta(\mathcal{S})$ is the state transition model with $ \mathcal{A}:=\mathcal{A}^1\times \mathcal{A}^2\times\cdots\times\mathcal{A}^n$, $r^i:\mathcal{S}\times\mathcal{A}\times\mathcal{S}\rightarrow[0,r_{\max}]$ is the reward function of agent $i$, $\gamma$  is the discount factor.
With global state $s \in \mathcal{S}$, each agent $i \in \mathcal{V}$ selects an action $a^i$ from its private set $\mathcal{A}^i$ according to its policy $\pi^i(\cdot \mid s)$. Based on the joint action $\mathbf{a}:=(a^1,a^2,\cdots,a^n)$, the environment moves to $s'$, and agent $i$ receives reward $r^i(s,\mathbf{a},s')$, where the policy $\pi^i$, action set $\mathcal{A}^i$, and reward $r^i$ are private to agent $i$. Then the value function can be defined as 
\begin{align*}
	V(s) = \mathbb{E} \left[ \frac{1}{n} \sum_{i \in \mathcal{V}} \sum_{t=0}^{\infty} \gamma^t r_{t+1}^i \mid s_0 = s \right].
\end{align*}
Equivalently, the value function can  be reformulated as the Bellman equation
\begin{align}\label{Bel-eq}
	&V(s) \notag\\
	&= \frac{1}{n}\sum_{i\in\mathcal{V}}\sum_{s'\in\mathcal{S}}\sum_{\mathbf{a}\in\mathcal{A}}\pi(\mathbf{a}|s)p(s'|\mathbf{a},s)\left[ r^i(s,\mathbf{a},s')+\gamma V(s') \right],
\end{align}
where $\pi(\mathbf{a}|s):=\prod_{i=1}^n\pi^i(a^i|s)$ is the joint policy.
In practice, computing the exact value function from the above equation is difficult due to the curse of dimensionality. A standard approach is to consider linear function approximation \cite{bhandari2018finite}:
\begin{align*}
	\tilde{V}_{\theta}(s) = \phi^{\top}(s) \theta, \quad \forall s \in \mathcal{S},
\end{align*}
where $\phi(s)\in \mathbb{R}^d$ denotes the feature vector of state $s$ and $\theta \in \mathbb{R}^d$ is a parameter vector. When the state space $\mathcal{S}=\{1,2,\cdots,S\}$ is finite, $\tilde{V}_{\theta}\in\mathbb{R}^S$ can be expressed compactly as
\begin{align*}
	\tilde{V}_{\theta} = \Phi\theta,
\end{align*}
where $\Phi:=\left[\phi(1),\phi(2),\cdots,\phi(S)\right]^{\top}$. 
Note that the Bellman equation may not have solution if we simply replace $V$ with its linear approximation $\tilde{V}_{\theta}$. A classic method is to consider the projected Bellman equation
\begin{equation}\label{projected-BE}
	\Phi\theta=\Pi_\phi\left[\mathbf{r}_\pi+\gamma P_\pi\Phi\theta\right],
\end{equation}
where $\Pi_\phi:=\Phi(\Phi^{\top}\text{diag}(\mathbf{d}_\pi)\Phi)^{-1}\Phi^{\top}\text{diag}(\mathbf{d}_\pi)$, $\mathbf{d}_\pi=\left[d_\pi(1),d_\pi(2),\cdots,d_\pi(S)\right]^\top$ is the unique positive stationary probability distribution of the MDP,\footnote{Throughout the paper, we assume  that $\Phi$ has full column rank and  the finite-state Markov chain induced by the MDP is irreducible and aperiodic under the given policy  to  guarantee the well-definedness of of $\Pi_\phi$ and $\mathbf{d}_\pi$.}
\begin{equation}\label{def:r&P}
	\mathbf{r}_\pi:=[r_\pi(1),r_\pi(2),\cdots,r_\pi(S)]^{\top}, ~P_\pi:=[p_{ss'}]_{S\times S}
\end{equation}
with
\begin{equation}\label{def:r&P-1}
	\begin{aligned}
	&r_\pi(s)=\frac{1}{n}\sum_{i\in\mathcal{V}}\sum_{s'\in\mathcal{S}}\sum_{\mathbf{a}\in\mathcal{A}}\pi(\mathbf{a}|s)p(s'|\mathbf{a},s) r^i(s,\mathbf{a},s'),\\
	&p_{ss'}=\sum_{\mathbf{a}\in\mathcal{A}}\pi(\mathbf{a}|s)p(s'|\mathbf{a},s)	
	\end{aligned}
\end{equation}
for all $1\le s,s'\le S$. 
By \cite[Lemma 6]{Tsitsiklis1997TD},  there exists a unique solution  $\theta^*\in\mathbb{R}^d$ to the projected Bellman equation.

We propose a Push-Pull-type distributed TD algorithm, PP-DTD, for solving the projected Bellman equation (\ref{projected-BE}) over directed networks, which reads as follows.
\begin{algorithm}[H] 
	\caption{Push-Pull based Decentralized TD learning Algorithm (PP-DTD)}\label{alg:PP-DTD}
	\label{1}
	\begin{algorithmic}[1] 
		\Require step-sizes $\alpha_t$, parameters $\beta_t$, weight matrices $\mathbf{W}=[ w_{ij} ]$, $\mathbf{M}=[m_{ij}]$, initial values $\{y_0^i\}_{i \in \mathcal{V}}$, $\{q_{-1}^i\}_{i \in \mathcal{V}}$, $\{\theta^{i}_0\}_{i \in \mathcal{V}}$, $\{q^{i}_0\}_{i \in \mathcal{V}}$, where $\theta^{i}_0=\theta_0$ and $q^{i}_0=q_{-1}^i=0$ for any $i\in\mathcal{V}$.
		\Statex For each agent $i$, compute:
		\For {$t=0,1,2,\cdots$}
		\State{\footnotesize     $\tilde{\theta}_{t+1}^i=       (\theta_t^i + \alpha_t {y}_t^i)$,}
		\State{\footnotesize
			\begin{equation*}
				\theta_{t+1}^i	=\left\{ 
				\begin{aligned}
					\sum_{j=1}^n {w}_{ij}\tilde{\theta}_{t+1}^j~~~~~~~~~~~&\text{when $\xi_t^i:=\{s_t,r_t^i,s_{t+1}'\}$ are \text{i.i.d.},}\footnotemark[2]\\
					\mathbf{\Pi}_{\mathbf{\mathcal{X}}} \left[ \sum_{j=1}^n {w}_{ij} \tilde{\theta}_{t+1}^j \right]~&\text{when $\xi_t^i$ are Markovian}.\footnotemark[2]
				\end{aligned}
				\right.
			\end{equation*}
			
		}  
		\State{\footnotesize Observes $\xi_{t+1}^i=\{s_{t+1},r_{t+1}^i,s_{t+2}'\}$, compute stochastic \textit{semigradients} \cite{sutton1998reinforcement}
			$$g(\theta;\xi^i_{t+1}):=\phi(s_t) \left[  r_t^i + \gamma \phi(s'_{t+1})^\top \theta - \phi(s_t)^\top \theta \right]$$
			at points $\theta_{t+1}^i$ and $\theta_{t}^i$;  updates the estimate of the exact semigradient via}
		{\footnotesize \begin{align*}
				q_{t+1}^i &= (1 - \beta_t) \left( q_{t}^i - g(\theta_t^i; \xi_{t+1}^i) \right) + g(\theta_{t+1}^i; \xi_{t+1}^i).
		\end{align*}}
		\State
		{\footnotesize Updates a semigradient tracking variable
			\begin{align*}
				y_{t+1}^i = \sum_{j=1}^n m_{ij}( y_t^j + q_{t+1}^j - q_t^j).
		\end{align*}}
		\EndFor
	\end{algorithmic}
\end{algorithm}
\footnotetext[2]{``i.i.d.'' means that the tuples $\bigl(s_t, \{r_t^i\}_{i\in\mathcal{V}}, s_{t+1}'\bigr)$ are drawn independently from the stationary distribution of the underlying MDP; ``Markovian'' means that the tuples are collected along a single trajectory of a Markov chain.}

In Algorithm 1, $\mathbf{Step}$ ${\mathbf{2}}$ executes a gradient ascent update  on $\theta_t^i$ along the direction of the global semigradient tracker $y_t^i$ and obtains an intermediate variable $\tilde{\theta}^i_{t+1}$.   
$\mathbf{Step~3}$ updates $\theta_{t+1}^i$ via a neighborhood-weighted sum to guarantee consensus among agents. Moreover, when the samples are Markovian, an additional projection step onto the ball $\mathcal{X}=\{\theta:\|\theta\|\le\mathcal{R}\}$ is incorporated into $\mathbf{Step~3}$ to improve algorithmic stability, where the projection radius $\mathcal{R}$ follows the rule given in  \cite[Lemma 7]{bhandari2018finite}.
$\mathbf{Step}$ ${\mathbf{4}}$ employs the hybrid variance reduction technique \cite{cutkosky2019momentum} to estimate the local exact semigradient, which ensures that the semigradient estimation error vanishes asymptotically. $\mathbf{Step}$ ${\mathbf{5}}$ performs a consensus update for $y_{t+1}^i$ based on the standard dynamic consensus framework \cite{nedic2017achieving}, which can track the current global semigradient information consisting of local semigradient estimates.  $\mathbf{Step}$ ${\mathbf{3}}$ and $\mathbf{Step}$ ${\mathbf{5}}$ only require matrices $\mathbf{W}:=[w_{ij}]$ and $\mathbf{M}:=[m_{ij}]$ to be row stochastic and   column stochastic respectively, which decouples the parameter mixing and semigradient tracking communication steps to adapt to directed communication networks.

To simplify the presentation, we introduce the following notation
\begin{align*}
	&{\Theta}_{t+1} := [\theta_{t+1}^1, \cdots, \theta_{t+1}^n]^\top,\quad  \mathbf{Y}_{t+1} := [y_{t+1}^1, \ldots, y_{t+1}^n]^\top, \\
	&\mathbf{Q}_{t+1} := [q_{t+1}^1, \cdots, q_{t+1}^n]^\top,\\ 
	& \mathbf{G}_{t+1} := [g(\theta_{t+1}^1; \xi_{t+1}^1), \cdots, g(\theta_{t+1}^n; \xi_{t+1}^n)]^\top, \\
	&\mathbf{\tilde{G}}_{t+1} := [g(\theta_t^1; \xi_{t+1}^1), \cdots, g(\theta_t^n; \xi_{t+1}^n)]^\top.
\end{align*}
Consequently, Algorithm \ref{alg:PP-DTD} can be expressed in the following compact form:
\begin{equation}\label{alg-1}
	\begin{aligned}
		\begin{cases}
			{\Theta}_{t+1}&=\mathbf{W}({\Theta}_t+\alpha_t\mathbf{Y}_t) \\
			&\quad\left(\text{or }\mathbf{\Pi}_X\left[\mathbf{W}({\Theta}_t+\alpha_t\mathbf{Y}_t)\right] \text{ for Markovian setting}\right),\\
			\mathbf{Q}_{t+1}&=(1-\beta_t)\mathbf{Q}_t-(1-\beta_t)\widetilde{\mathbf{G}}_{t+1}+\mathbf{G}_{t+1},\\
			\mathbf{Y}_{t+1}&=\mathbf{M}(\mathbf{Y}_t+\mathbf{Q}_{t+1}-\mathbf{Q}_t),
		\end{cases}
	\end{aligned}	
\end{equation}
where  $X:=\underbrace{\mathcal{X}\times\mathcal{X}\times\cdots\times\mathcal{X}}_{n}$.

\section{Convergence analysis}\label{sec:main_results}
In this section, we provide the convergence rates of the PP-DTD algorithm. We first introduce some needed definition, assumptions, and technical lemma that will be used throughout the paper.

\begin{ass}
	\label{ass2}
	All features  $\phi(i)$  are  bounded, i.e. $\|\phi(i)\| \le 1$. 
\end{ass}

\begin{ass}[\textbf{weight matrices and networks}]\label{ass4}
	Let $\mathcal{G}_{\mathbf{W}}=\left(\mathcal{V},\mathcal{E}_\mathbf{W}\right)$ and $\mathcal{G}_{\mathbf{M}^\top}=\left(\mathcal{V},\mathcal{E}_{\mathbf{M}^\top}\right)$ be subgraphs of $\mathcal{G}$ induced by  matrices $\mathbf{W}$ and $\mathbf{M}^\top$ respectively\footnote[3]{For a nonnegative weight matrix $\mathbf{W}=\{w_{ij}\}\in\mathbb{R}^{n\times n}$,  define the induced directed communication graph as $\mathcal{G}_{\mathbf{W}}=(\mathcal{V},\mathcal{E}_{\mathbf{W}})$ where $(i, j)\in \mathcal{E}_{\mathbf{W}}$ if and only if $w_{ij}>0$.}.  Suppose that  
	\begin{itemize}
		\item [(i)]The matrix $\mathbf{W}\in\mathbb{R}^{n\times n}$ is nonnegative row stochastic and $\mathbf{M}\in\mathbb{R}^{n\times n}$ is nonnegative column stochastic, i.e., $\mathbf{W}\mathbf{1}=\mathbf{1}$ and $\mathbf{1}^\intercal\mathbf{M}=\mathbf{1}^\top$. In addition, the diagonal entries of $\mathbf{W}$ and $\mathbf{M}$ are positive.
		\item[(ii)]  The graphs $\mathcal{G}_\mathbf{W}$ and $\mathcal{G}_{\mathbf{M}^\top}$ each contain at least one spanning tree. Moreover, there exists at least one node that is a root of spanning trees for both $\mathcal{G}_\mathbf{W}$ and $\mathcal{G}_{\mathbf{M}^\top}$, i.e. $\mathcal{R}_\mathbf{W}\cap \mathcal{R}_{\mathbf{M}^\top}\ne\emptyset$, where $\mathcal{R}_\mathbf{W}$ ( $\mathcal{R}_{\mathbf{M}^\top}$) is the set of roots of all possible spanning trees in the graph $\mathcal{G}_\mathbf{W}$ ( $\mathcal{G}_{\mathbf{M}^\top}$).
	\end{itemize}
\end{ass}

Assumption~\ref{ass4} is a standard condition of the underlying network for Push-Pull-type algorithms \cite{pu2020push,song2022compressed}, which does not require $\mathcal{G}_{\mathbf{W}}$ and $\mathcal{G}_{\mathbf{M}^\top}$ to be undirected or strongly connected. Under Assumption \ref{ass4}, matrix $\mathbf{W}$ has a nonnegative left eigenvector $\mathbf{u}$ with $\mathbf{u}^{\top}\mathbf{1} = n$, and matrix $\mathbf{M}$ has a nonnegative left eigenvector $\mathbf{v}$ with $\mathbf{v}^{\top}\mathbf{1} = n$; moreover, $\mathbf{u}^{\top}\mathbf{v} > 0$ \cite[Lemma 1]{pu2020push}.

To facilitate the quantification of the consensus error of PP-DTD,  we define the following matrix norm.

\begin{defn}
	\emph{Given an arbitrary inner product $\langle x,y \rangle_W :=\langle\hat{\mathbf{W}}x,\hat{\mathbf{W}}y \rangle$ and its induced vector norm $\|x\|_W:=\|\hat{\mathbf{W}}x\|$ on $\mathbb{R}^{n}$, for any $\mathbf{x},\mathbf{y}\in \mathbb{R}^{n\times d}$,
		\begin{align*}
			&\langle\mathbf{x},\mathbf{y} \rangle_W=\langle\mathbf{x}^{(1)},\mathbf{y}^{(1)} \rangle_W+\langle\mathbf{x}^{(2)},\mathbf{y}^{(2)} \rangle_W+\cdots+\langle\mathbf{x}^{(d)},\mathbf{y}^{(d)} \rangle_W
		\end{align*}
		and $\|\mathbf{x}\|_W=\sqrt{\langle\mathbf{x},\mathbf{x} \rangle_W}$, where  $\hat{\mathbf{W}}\in\mathbb{R}^{n\times n}$ is an invertible matrix, $\mathbf{x}^{(i)}$ and $\mathbf{y}^{(i)}$ are the $i$-th column of matrix $\mathbf{x}$ and $\mathbf{y}$ respectively.}
\end{defn}

\begin{lem}\cite[Lemma 3]{song2022compressed}\label{lem99}
	Under Assumption \ref{ass4},\\ (i) there exist invertible matrices $\hat{\mathbf{W}}$, $\hat{\mathbf{M}}\in\mathbb{R}^{n\times n}$ and the corresponding induced inner products
	\begin{equation*}
		\langle x, y\rangle_{\mathbf{M}}:=\langle \hat{\mathbf{M}}x, \hat{\mathbf{M}}y\rangle,\quad \langle x, y\rangle_{{\mathbf{W}}}:=\langle \hat{\mathbf{W}}x, \hat{\mathbf{W}}y\rangle
	\end{equation*}
	and vector norms
	\begin{equation*}    \|x\|_{\hat{\mathbf{M}}}:=\left\|\hat{\mathbf{M}}x\right\|,\quad \|x\|_{\hat{\mathbf{W}}}:=\left\|\hat{\mathbf{W}}x\right\|,\quad \forall \mathbf{x}\in\mathbb{R}^{n};
	\end{equation*}
	(ii) let $\|\cdot\|_*$ and $\|\cdot\|_{**}$ be any two vector norms of $\|\cdot\|$, $\|\cdot\|_{\hat{{\mathbf{M}}}}$ or $\|\cdot\|_{\hat{{\mathbf{W}}}}$. There exists a constant $\bar{c}>1$ such that
	\begin{equation*}
		\|x\|_*\le \bar{c} \|x\|_{**},\quad \forall x\in\mathbb{R}^{n};
	\end{equation*}
	(iii) the corresponding matrix norms satisfy:
	\begin{equation*}
		\left\|\mathbf{M}-\frac{v\mathbf{1}^\intercal}{n}\right\|_{\hat{\mathbf{M}}}\le1-\rho_\mathbf{M}, \quad \left\|\mathbf{W}-\frac{\mathbf{1}u^\intercal}{n}\right\|_{\hat{\mathbf{W}}}\le 1-\rho_\mathbf{W},
	\end{equation*}
	where $\rho_\mathbf{M},\rho_\mathbf{W}$ are constants in $(0,1]$.
\end{lem}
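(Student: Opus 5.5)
The plan is to build the norms in Lemma~\ref{lem99} from the spectral structure of $\mathbf{W}$ and $\mathbf{M}$, using the standard construction behind Push-Pull analyses. I treat $\mathbf{W}$ in detail; $\mathbf{M}$ follows by applying the same argument to $\mathbf{M}^\top$, which is row stochastic with left eigenvector $\mathbf{v}$ and whose graph $\mathcal{G}_{\mathbf{M}^\top}$ has a spanning tree.

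\emph{Step 1 (spectrum of $\mathbf{W}$).} Assumption~\ref{ass4} gives three facts: $\mathbf{W}$ is nonnegative and row stochastic, it has positive diagonal, and $\mathcal{G}_\mathbf{W}$ contains a spanning tree. From these, the eigenvalue $1$ of $\mathbf{W}$ is simple, and every other eigenvalue has modulus strictly less than $1$. This is the standard consensus fact: aperiodicity comes from the self-loops, and the single closed class comes from the spanning tree (see \cite[Lemma 1]{pu2020push}). Consequently, with the left eigenvector $\mathbf{u}$ normalized so that $\mathbf{u}^\top\mathbf{1}=n$, the matrix $\mathbf{1}\mathbf{u}^\top/n$ is the spectral projector onto the eigenvalue $1$. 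The deflated matrix $A:=\mathbf{W}-\mathbf{1}\mathbf{u}^\top/n$ has spectral radius $\varrho(A)<1$, because it keeps all the eigenvalues of $\mathbf{W}$ except that $1$ is replaced by $0$.

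\emph{Step 2 (norm construction).} This step is the only delicate one. The spectral radius is not itself an operator norm, so I need an induced norm under which $\|A\|<1$. The classical lemma in Horn--Johnson states that for any $\epsilon>0$ there is an invertible $\hat{\mathbf{W}}$ with $\|\hat{\mathbf{W}}A\hat{\mathbf{W}}^{-1}\|_2\le\varrho(A)+\epsilon$. The construction is concrete: take the Jordan form $A=SJS^{-1}$ and set $D_\epsilon=\mathrm{diag}(1,\epsilon,\dots,\epsilon^{n-1})$. Then $\hat{\mathbf{W}}=D_\epsilon^{-1}S^{-1}$ produces a transformed matrix $D_\epsilon^{-1}JD_\epsilon$ whose off-diagonal entries are $\epsilon$.

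This transformation is a complex similarity in general. I would handle that in one of two ways:
\begin{itemize}
\item use the real Jordan form, or
\item note that $\|x\|_{\hat{\mathbf{W}}}:=\|\hat{\mathbf{W}}x\|$ restricted to real $x$ is still a norm induced by a real inner product, namely $\mathrm{Re}\langle\hat{\mathbf{W}}x,\hat{\mathbf{W}}y\rangle$, and this is realized by the real invertible matrix $(\hat{\mathbf{W}}^*\hat{\mathbf{W}})^{1/2}$.
\end{itemize}
Choosing $\epsilon=(1-\varrho(A))/2$ gives $\|A\|_{\hat{\mathbf{W}}}\le 1-\rho_\mathbf{W}$ with $\rho_\mathbf{W}:=(1-\varrho(A))/2\in(0,1]$. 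This is part (iii); part (i) holds by construction.

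\emph{Step 3 (norm equivalence).} Since $\hat{\mathbf{W}}$ and $\hat{\mathbf{M}}$ are invertible, we have $\|x\|_{\hat{\mathbf{W}}}\le\|\hat{\mathbf{W}}\|_2\|x\|$ and $\|x\|\le\|\hat{\mathbf{W}}^{-1}\|_2\|x\|_{\hat{\mathbf{W}}}$, and similarly for $\hat{\mathbf{M}}$. Chaining these bounds gives every pairwise comparison among the three norms. Taking $\bar c$ to be the maximum of the resulting finitely many constants together with $2$ (to ensure $\bar c>1$) proves (ii).

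The matrix-norm statements extend to the Frobenius-type norm of the Definition, column by column, since that norm is defined columnwise. The main obstacle is Step~2, specifically making sure that an arbitrary Jordan structure still yields a real inner-product norm. The $(\hat{\mathbf{W}}^*\hat{\mathbf{W}})^{1/2}$ trick resolves this cleanly.
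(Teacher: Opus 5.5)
Your proposal is correct. It follows the standard argument behind this cited result: the paper gives no proof of its own and defers to \cite[Lemma 3]{song2022compressed}, which, as in \cite{pu2020push}, rests on the deflated matrices having spectral radius below one, the Horn--Johnson construction of an induced norm within $\epsilon$ of the spectral radius, and equivalence of norms in finite dimensions.

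Two small repairs are needed. First, $(\hat{\mathbf{W}}^*\hat{\mathbf{W}})^{1/2}$ is Hermitian but generally not real. The real matrix you want is $\bigl(\mathrm{Re}(\hat{\mathbf{W}}^*\hat{\mathbf{W}})\bigr)^{1/2}$, which is symmetric positive definite because $x^\top\mathrm{Re}(\hat{\mathbf{W}}^*\hat{\mathbf{W}})x=\|\hat{\mathbf{W}}x\|^2$ for real $x$ (alternatively, just use the real Jordan form). Second, running the whole argument on $\mathbf{M}^\top$ produces a norm for the transposed matrix, not for $\mathbf{M}-\mathbf{v}\mathbf{1}^\top/n$. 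Instead, apply Step~2 directly to $\mathbf{M}-\mathbf{v}\mathbf{1}^\top/n$, which has the same spectrum as $\mathbf{M}^\top-\mathbf{1}\mathbf{v}^\top/n$. Equivalently, take $\hat{\mathbf{M}}$ to be the inverse transpose of the matrix obtained for $\mathbf{M}^\top$.
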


\subsection{Convergence analysis under \text{i.i.d.} setting}
In this subsection, we present the convergence rate of PP-DTD under \text{i.i.d.} setting, that is,   the tuples $\bigl(s_t, \{r_t^i\}_{i\in\mathcal{V}}, s_{t+1}'\bigr)$ are drawn independently from the stationary distribution of the underlying MDP.

For the sake of analysis, we define  
\begin{equation}\label{ea}
	{\small\left\{
	\begin{aligned}
		({i})&\text{ local exact semigradients and its estimation errors:}\\
		&\mathbf{G}_{t+1}^{\mathbb{E}} := \left[ g^1(\theta_{t+1}^1), \dots, g^n(\theta_{t+1}^n) \right],\\ 
		&\mathbf{e}_{t+1} := \left[ {e}_{t+1}^1, \dots, {e}_{t+1}^n \right]^{\top},\\
		(ii)&\text{ global exact semigradient and averaged iterate: }\\
		& \bar{g}(\theta):=\frac{1}{n}\sum_{j=1}^ng^j(\theta),~~\bar{\theta}_t=\sum_{j=1}^n \frac{u_{j}}{n} {\theta}_{t}^{j},\\
		(iii)&\text{ weighted average matrices:}\\
		&\bar{\mathbf{Y}}_t = \frac{\mathbf{v} \mathbf{1}^\top}{n} \mathbf{Y}_t,\bar{\mathbf{G}}_t = \frac{\mathbf{v} \mathbf{1}^\top}{n} \mathbf{G}_t^{\mathbb{E}},\bar{\Theta}_t=\frac{\mathbf{1}{\mathbf{u}^\top}}{n}{\Theta}_t,
	\end{aligned}
	\right.}
\end{equation}where 
\begin{align}
	g^i(\theta) :=& \sum_{s \in \mathcal{S}} d_\pi(s) r^i_\pi(s)\phi(s) \\
	&+ \sum_{s, s' \in \mathcal{S}} d_\pi(s) p_{ss'} \left( \gamma \phi(s')^\top \theta - \phi(s)^\top \theta \right) \phi(s),\notag\\
	e^i_{t+1}:=&g^i(\theta_{t+1}^i)-q^i_{t+1},\notag\\
	r^i_\pi(s):=&\sum_{s'\in\mathcal{S}}\sum_{\mathbf{a}\in\mathcal{A}}\pi(\mathbf{a}|s)p(s'|\mathbf{a},s) r^i(s,\mathbf{a},s'),\label{def:r-i}
\end{align}
$d_\pi(s)$ and $p_{ss'}$ are defined in (\ref{def:r&P}).
Next, we establish the convergence rate of PP-DTD in three steps:
\begin{itemize}
	\item [($i$)] Establish recursive bounds for the global semigradient estimation error $\mathbb{E} \left[ \| \mathbf{e}_{t+1} \|^2 \right]$, global semigradient tracking error $\mathbb{E}\left[\left\|{\mathbf{Y}}_{t+1} - \bar{\mathbf{Y}}_{t+1}\right\|^2\right]$, and consensus error $\mathbb{E}\left[\left\|{\Theta}_{t+1}-\bar{{\Theta}}_{t+1}\right\|_{\widehat{\mathbf{M}}}^{2}\right]$ (Lemmas \ref{lem2}–\ref{lem4});
	\item[($ii$)] Derive recursive bounds for the optimality gap $\mathbb{E}\left[\left\|\bar{{\theta}}_{t+1}-{\theta}^{*}\right\|_2^{2}\right]$ (Lemma \ref{lem5});
	\item[($iii$)] Construct a combined Lyapunov function and establish its convergence rate (Theorem \ref{t1}).
\end{itemize}

\begin{lem}\label{lem2}
	Suppose that Assumptions \ref{ass2}, \ref{ass4} hold and $\beta_t \leq \frac{1}{2}$, $\alpha_t^2 \leq \frac{\|\mathbf{W}-\mathbf{I}\|^2}{2(1+\gamma)^2}$. Then for any $t \ge 0$,
	\begin{align*} 
		&\mathbb{E} \left[ \| \mathbf{e}_{t+1} \|^2 \right]\\
		\leq &(1 - 2 \beta_t  + \beta_t^2 + 16(1+\gamma)^2\alpha_t^2) \mathbb{E} \left[ \| \mathbf{e}_t \|^2 \right]+ 6 \sigma^2\beta^2_t \notag\\
		&+ 4(1+\gamma)^2(1+8 \| \mathbf{W}-\mathbf{I} \|^2) \bar{c}^2\mathbb{E} \left[ \| \Theta_t - \bar{\Theta}_t \|^2_{\widehat{\mathbf{W}}} \right] \notag\\
		&+ { 4(1+\gamma)^2 n (3\beta_t^2 + 8 (1 + \gamma)^2 n \alpha_t^2 )} \mathbb{E} \left[ \| \bar{\theta}_t - \theta^* \|_2^2 \right]\notag\\
		&+ 16(1+\gamma)^2 \alpha_t^2 \bar{c}^2\mathbb{E} \left[ \| \mathbf{Y}_t - \bar{\mathbf{Y}}_{t} \|^2_{\widehat{\mathbf{M}}} \right],\notag 
	\end{align*}
	where $\sigma^2:=\sum_{j=1}^n\mathbb{E}\left[\left\| g(\theta^*;\xi^j)\right\|^2\right]$, $\bar{c}$ is defined in Lemma \ref{lem99}.
\end{lem}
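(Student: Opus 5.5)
We start from the STORM-type recursion for the estimation error. By definition $e^i_{t+1}=g^i(\theta^i_{t+1})-q^i_{t+1}$, and Step 4 gives
\begin{align*}
\mathbf{e}_{t+1}&=(1-\beta_t)\mathbf{e}_t+\beta_t\bigl(\mathbf{G}^{\mathbb{E}}_{t+1}-\mathbf{G}_{t+1}\bigr)\\
&\quad+(1-\beta_t)\Bigl[\bigl(\mathbf{G}^{\mathbb{E}}_{t+1}-\mathbf{G}^{\mathbb{E}}_{t}\bigr)-\bigl(\mathbf{G}_{t+1}-\widetilde{\mathbf{G}}_{t+1}\bigr)\Bigr].
\end{align*}
Here we write the exact semigradients row-wise, consistently with $\mathbf{G}_t$. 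Let $\mathcal{F}_t$ be generated by samples up to $\xi_t$. In the i.i.d. setting, $\Theta_{t+1}$ and $\Theta_t$ are $\mathcal{F}_t$-measurable and $\xi_{t+1}$ is independent of $\mathcal{F}_t$. The last two brackets therefore have zero conditional mean, with $\mathbb{E}[g(\theta;\xi^i)]=g^i(\theta)$. The cross term with $(1-\beta_t)\mathbf{e}_t$ vanishes, and we obtain
\begin{align*}
\mathbb{E}\|\mathbf{e}_{t+1}\|^2\le(1-\beta_t)^2\mathbb{E}\|\mathbf{e}_t\|^2+2\beta_t^2\,\mathbb{E}\|\mathbf{G}_{t+1}-\mathbf{G}^{\mathbb{E}}_{t+1}\|^2+2\,\mathbb{E}\|\mathbf{G}_{t+1}-\widetilde{\mathbf{G}}_{t+1}\|^2,
\end{align*}
using $\mathbb{E}\|X-\mathbb{E}X\|^2\le\mathbb{E}\|X\|^2$ and $(1-\beta_t)^2\le1$. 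The factor $1-2\beta_t+\beta_t^2$ in the statement is exactly $(1-\beta_t)^2$.

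\textbf{Bounding the variance term.} Under Assumption \ref{ass2}, $g(\cdot;\xi)$ is affine with Lipschitz constant $1+\gamma$. We split
\[
g(\theta^i_{t+1};\xi)=g(\theta^*;\xi)+\bigl[g(\theta^i_{t+1};\xi)-g(\theta^*;\xi)\bigr]
\]
and use three-term Young inequalities with $\theta^i_{t+1}-\theta^*=(\theta^i_{t+1}-\bar\theta_t)+(\bar\theta_t-\theta^*)$. This produces $\sigma^2$, $(1+\gamma)^2\|\Theta_{t+1}-\mathbf{1}\bar\theta_t^\top\|^2$, and $n(1+\gamma)^2\|\bar\theta_t-\theta^*\|^2$, each with constant $3$; the $\sigma^2$ piece gives $6\sigma^2\beta_t^2$.

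\textbf{Bounding the difference term.} Again by Lipschitzness, this term is at most $(1+\gamma)^2\|\Theta_{t+1}-\Theta_t\|^2$. Both pieces reduce to $\Theta_{t+1}$ expressed from step $t$. Using $\mathbf{W}\mathbf{1}=\mathbf{1}$ and Step 3,
\[
\Theta_{t+1}-\Theta_t=(\mathbf{W}-\mathbf{I})(\Theta_t-\bar\Theta_t)+\alpha_t\mathbf{W}\mathbf{Y}_t .
\]
We then decompose $\mathbf{Y}_t=(\mathbf{Y}_t-\bar{\mathbf{Y}}_t)+\bar{\mathbf{Y}}_t$. The key identity is $\mathbf{1}^\top\mathbf{Y}_t=\mathbf{1}^\top\mathbf{Q}_t$ (column stochasticity of $\mathbf{M}$, $q_0=0$, and induction), so
\[
\bar{\mathbf{Y}}_t=\frac{\mathbf{v}}{n}\mathbf{1}^\top\bigl(\mathbf{G}^{\mathbb{E}}_t-\mathbf{e}_t\bigr).
\]
Since $\sum_i g^i(\theta^*)=n\bar g(\theta^*)=0$ by the projected Bellman equation, Lipschitzness yields
\[
\|\bar{\mathbf{Y}}_t\|^2\le 2\|\mathbf{e}_t\|^2+2(1+\gamma)^2\bigl(\|\Theta_t-\bar\Theta_t\|^2+n\|\bar\theta_t-\theta^*\|^2\bigr)\cdot(\text{const}),
\]
with constants absorbed using $\|\mathbf{v}\|^2\le n^2$ and $\|\mathbf{W}\|\le\sqrt n$ as needed. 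This produces the $16(1+\gamma)^2\alpha_t^2\mathbb{E}\|\mathbf{e}_t\|^2$ term, the $\alpha_t^2 n^2$ term on the optimality gap, and the $\alpha_t^2\|\mathbf{Y}_t-\bar{\mathbf{Y}}_t\|^2$ term. The hypothesis $\alpha_t^2\le\|\mathbf{W}-\mathbf{I}\|^2/(2(1+\gamma)^2)$ lets the $\alpha_t^2(1+\gamma)^2\|\Theta_t-\bar\Theta_t\|^2$ pieces be absorbed into the $\|\mathbf{W}-\mathbf{I}\|^2$-weighted consensus term. The condition $\beta_t\le1/2$ lets us merge the $\beta_t^2$-weighted copies of the same consensus and $\mathbf{Y}$ quantities into the displayed coefficients (e.g. $3\beta_t^2\le 1$). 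Finally, Lemma \ref{lem99}(ii) converts Frobenius norms into $\|\cdot\|_{\widehat{\mathbf{W}}}$ and $\|\cdot\|_{\widehat{\mathbf{M}}}$, introducing $\bar c^2$.

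\textbf{Main obstacle.} The delicate part is the bookkeeping in the last step. We must correctly express $\bar{\mathbf{Y}}_t$ through $\mathbf{e}_t$ and the exact semigradients via the tracking invariant, and use $\bar g(\theta^*)=0$ so that no constant bias term appears. We must also handle the mismatch between the $\mathbf{u}$-weighted average $\bar\theta_t$ and the unweighted sum in $\bar g$; this is resolved by passing through $\|\Theta_t-\bar\Theta_t\|$. Getting constants exactly matching $4(1+\gamma)^2(1+8\|\mathbf{W}-\mathbf{I}\|^2)$ is secondary; any consistent choice of Young splits gives the same structure.
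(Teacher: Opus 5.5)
Your route is the paper's: the STORM recursion for $\mathbf{e}_{t+1}$, the vanishing cross term by i.i.d.\ unbiasedness, the $g(\theta^*;\xi)$ split for the variance, the $(1+\gamma)$-Lipschitz reduction to $\|\Theta_{t+1}-\Theta_t\|^2$, the decomposition of $\mathbf{Y}_t$ through $\bar{\mathbf{Y}}_t$ using the tracking invariant and $\bar g(\theta^*)=0$, and Lemma~\ref{lem99}(ii) at the end. Those steps are fine.

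\textbf{Main gap.} The step that fails is the claim that the factors from $\|\mathbf{v}\|_2^2\le n^2$ and $\|\mathbf{W}\|\le\sqrt n$ can be ``absorbed'' to produce $16(1+\gamma)^2\alpha_t^2\,\mathbb{E}\|\mathbf{e}_t\|^2$. The $\mathbf{e}_t$-part of $\Theta_{t+1}-\Theta_t$ is $-\alpha_t\mathbf{W}\mathbf{v}\mathbf{1}^\top\mathbf{e}_t/n$. Its squared norm is $\alpha_t^2\|\mathbf{W}\mathbf{v}\|_2^2\|\mathbf{1}^\top\mathbf{e}_t\|^2/n^2$, which equals $\alpha_t^2(\|\mathbf{W}\mathbf{v}\|_2^2/n)\|\mathbf{e}_t\|^2$ when $\mathbf{e}_t$ has identical rows. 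Your displayed bound $\|\bar{\mathbf{Y}}_t\|^2\le 2\|\mathbf{e}_t\|^2+\cdots$ already drops the factor $\|\mathbf{v}\|_2^2/n$ in front of $\|\mathbf{e}_t\|^2$.

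The factor $\|\mathbf{W}\mathbf{v}\|_2^2/n$ equals $1$ when $\mathbf{v}=\mathbf{1}$ (i.e.\ $\mathbf{M}$ doubly stochastic). It is not bounded by an absolute constant under Assumption~\ref{ass4}. Take $m_{11}=w_{11}=1$ and $m_{1j}=w_{j1}=1-\epsilon$, $m_{jj}=w_{jj}=\epsilon$ for $j\neq1$. Then node $1$ is a common root, $\mathbf{v}=(n,0,\dots,0)^\top$, and $\|\mathbf{W}\mathbf{v}\|_2^2/n=n\bigl(1+(n-1)(1-\epsilon)^2\bigr)\approx n^2$.

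The target coefficient on $\mathbb{E}\|\mathbf{e}_t\|^2$ contains no $n$, no $\bar c$ and no network quantity, so there is nothing to absorb this into. The spare factor $n$ in the optimality-gap coefficient covers at most $\|\mathbf{v}\|_2^2/n$, not the additional $\|\mathbf{W}\|_2^2$.

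What your argument actually proves is the inequality with the following quantities multiplied by $\kappa:=\|\mathbf{W}\|_2^2\|\mathbf{v}\|_2^2/n$: the $\alpha_t^2$-coefficients of $\mathbb{E}\|\mathbf{e}_t\|^2$, $\mathbb{E}\|\mathbf{Y}_t-\bar{\mathbf{Y}}_t\|^2_{\widehat{\mathbf{M}}}$ and $\mathbb{E}\|\bar\theta_t-\theta^*\|_2^2$, and the step-size condition used to absorb $\alpha_t^2(1+\gamma)^2\|\Theta_t-\bar\Theta_t\|^2$. You should state and prove that version, or add a hypothesis forcing $\kappa=1$ (both matrices doubly stochastic).

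For comparison, the paper reaches the displayed constants only by asserting $\|\mathbf{v}\mathbf{1}^\top/n\|_2^2=1$ and by writing $\Theta_{t+1}-\Theta_t=(\mathbf{W}-\mathbf{I})\Theta_t-\alpha_t\mathbf{Y}_t$ without the $\mathbf{W}$. You correctly kept both factors, so you cannot then claim the paper's constants.

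\textbf{Smaller gap.} The invariant $\mathbf{1}^\top\mathbf{Y}_t=\mathbf{1}^\top\mathbf{Q}_t$ does not follow from column stochasticity, $q_0^i=0$ and induction alone. The base case needs $\mathbf{1}^\top\mathbf{Y}_0=0$ (e.g.\ $y_0^i=0$), which Algorithm~\ref{alg:PP-DTD} does not impose, and the lemma fails without it. Take identical rewards, $\theta_0=\theta^*$, $\mathbf{Y}_0=\mathbf{v}c^\top$ with $c\neq0$, and $\beta_0=0$. Then every term on the right-hand side vanishes. However, $\mathbb{E}\|\mathbf{e}_1\|^2=\alpha_0^2\|\mathbf{W}\mathbf{v}\|_2^2\,\mathbb{E}\|(A_\xi-A)c\|_2^2$, where $A_\xi:=\phi(s)(\gamma\phi(s')-\phi(s))^\top$ and $A:=\mathbb{E}A_\xi$. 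This is positive unless $A_\xi c$ is a.s.\ constant, so you must state this initialization explicitly.
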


\begin{proof}
	By the definition of $\mathbf{e}_{t+1}$ in (\ref{ea}),
	{\small\begin{align*}
		&\| \mathbf{e}_{t+1} \|^2\\
		 &= \| (1 - \beta_t) \mathbf{e}_t \|^2 + \| (1 - \beta_t) (\mathbf{G}_{t+1}^\mathbb{E} - \widetilde{\mathbf{G}}_{t+1}) + (\mathbf{G}_{t+1} - \mathbf{G}_{t+1}^\mathbb{E}) \|^2 \notag\\
		&\quad+ 2 \left< (1 - \beta_t) \mathbf{e}_t, (1 - \beta_t) (\mathbf{G}_{t+1}^\mathbb{E} - \widetilde{\mathbf{G}}_{t+1}) + (\mathbf{G}_{t+1} - \mathbf{G}_{t+1}^\mathbb{E}) \right>.\notag
	\end{align*}}
	
	Taking the expectation of both sides of above equation, 
{\small	\begin{align}
		&\mathbb{E} \left[ \| \mathbf{e}_{t+1} \|^2 \right]\notag\\
		 &= (1 - \beta_t)^2 \mathbb{E} \left[ \| \mathbf{e}_t \|^2 \right]\notag\\
		&\quad + \mathbb{E} \left[ \left\| (1 - \beta_t) (\mathbf{G}_{t+1}^\mathbb{E} - \widetilde{\mathbf{G}}_{t+1}) + (\mathbf{G}_{t+1} - \mathbf{G}_{t+1}^\mathbb{E}) \right\|^2 \right] \notag\\
		&\leq (1 - \beta_t)^2 \mathbb{E} \left[\left \| \mathbf{e}_t \right\|^2 \right]\notag\\
		&\quad+2\mathbb{E} \left[ \left\| (1 - \beta_t) (\mathbf{G}_{t}^\mathbb{E} - {\mathbf{G}}^{\mathbb{E}}_{t+1}) + (1-\beta_t) (\mathbf{G}_{t+1} - \widetilde{\mathbf{G}}_{t+1}) \right\|^2 \right]\notag\\
		&\quad + 2 \beta_t^2\mathbb{E} \left[ \| \mathbf{G}_{t+1} - \mathbf{G}_{t+1}^\mathbb{E} \|^2 \right],\label{e2}
	\end{align}}where the equality is obtained by the unbiasedness of the stochastic semigradient estimators
	under \text{i.i.d.} setting, and the inequality is obtained by Young's inequality.

	For the second term on the right-hand side of (\ref{e2}),
	{\small\begin{align*}
		&2 \mathbb{E} \left[ \left\| (1 - \beta_t) (\mathbf{G}_{t}^\mathbb{E} - \mathbf{G}_{t+1}^\mathbb{E}) + (1 - \beta_t) (\mathbf{G}_{t+1} - \widetilde{\mathbf{G}}_{t+1}) \right\|^2 \right] \notag\\
		&= 2 \mathbb{E} \left[ \| (1 - \beta_t) (\mathbf{G}_{t}^\mathbb{E} - \mathbf{G}_{t+1}^\mathbb{E}) \|^2 \right]\notag \\
		&\quad+ 4 \mathbb{E} \left[ (1 - \beta_t)^2 \mathbb{E} \left[ \left< \mathbf{G}_{t}^\mathbb{E} - \mathbf{G}_{t+1}^\mathbb{E}, \mathbf{G}_{t+1} - \widetilde{\mathbf{G}}_{t+1} \right> \mid {\Theta}_{t+1},{\Theta}_{t} \right] \right] \notag\\
		&\quad+ 2 \mathbb{E} \left[ \| (1 - \beta_t) (\mathbf{G}_{t+1} - \widetilde{\mathbf{G}}_{t+1}) \|^2 \right] \notag\\
		&= 2 (1 - \beta_t)^2\mathbb{E} \left[ - \| \mathbf{G}_{t}^\mathbb{E} - \mathbf{G}_{t+1}^\mathbb{E} \|^2 + \| \mathbf{G}_{t+1} - \widetilde{\mathbf{G}}_{t+1} \|^2 \right] \notag\\
		&\leq 2 (1 - \beta_t)^2 \mathbb{E} \left[ \| \mathbf{G}_{t+1} - \widetilde{\mathbf{G}}_{t+1} \|^2 \right] \notag\\
		&\leq 2 (1 + \gamma)^2 \mathbb{E} \left[ \| \Theta_{t+1} - \Theta_{t} \|^2 \right]\notag,
	\end{align*}}where the second equality is obtained by the fact that $\mathbb{E} \left[ \mathbf{G}_{t+1} - \widetilde{\mathbf{G}}_{t+1} \mid {\Theta}_{t+1}, {\Theta}_{t} \right] = \mathbf{G}_{t+1}^\mathbb{E} - \mathbf{G}_t^\mathbb{E}$, and the last inequality is obtained by the $(1+\gamma)$-Lipschitz continuity of $g(\cdot;\xi_{t+1}^i)$.

	For the third term on the right-hand side of (\ref{e2}),
	{\small\begin{align}
		&2 \beta_t^2 \mathbb{E} \left[ \left\| \mathbf{G}_{t+1}^\mathbb{E} - \mathbf{G}_{t+1} \right\|^2 \right]\notag\\ 
		&= 2 \beta_t^2 \left( \mathbb{E} \left[ \left\| \mathbf{G}_{t+1} \right\|^2 \right] - \mathbb{E} \left[ \left\| \mathbf{G}_{t+1}^\mathbb{E} \right\|^2 \right] \right) \notag\\
		&= 2 \beta_t^2  \sum_{i=1}^n \mathbb{E} \left[ \left\| g(\theta_{t+1}^i;\xi^j_{t+1}) - g(\theta^*;\xi^j_{t+1}) + g(\theta^*;\xi^j_{t+1}) \right\|_2^2 \right] \notag\\
		&\leq 6 \beta_t^2 \sigma^2 + 3 \beta_t^2 (1+\gamma)^2 \mathbb{E} \left[ \| \Theta_{t+1} - \Theta^* \|^2 \right] \notag\\
		&\leq 6 \beta_t^2 \sigma^2 + 6 \beta_t^2 (1+\gamma)^2 \mathbb{E} \left[ \left\| \Theta_{t+1} - \Theta_t \right\|^2 \right] \notag\\
		&\quad+ 12 \beta_t^2 (1+\gamma)^2 \mathbb{E} \left[ \left\| \Theta_t - \bar{\Theta}_t \right\|^2 \right]\notag \\
		&\quad+ 12 \beta_t^2 (1+\gamma)^2 n\mathbb{E} \left[ \left\| \bar{\theta}_t - \theta^* \right\|_2^2 \right],\label{e3}
	\end{align}}where  $\sigma^2=\sum_{j=1}^n\mathbb{E}\left[\left\| g(\theta^*;\xi^j)\right\|^2\right]$, and the first equality is obtained by $\mathbb{E}[\mathbf{G}_{t+1} | \mathcal{F}_{t+1}] = \mathbf{G}_{t+1}^{\mathbb{E}}$, the first inequality is obtained by the fact $(a+b)^2 \leq \left(1+\frac{1}{\epsilon}\right) b^2 + (1+\epsilon) a^2$, $\forall a, b, \epsilon > 0$. Then
	{\small\begin{align}
		\mathbb{E}\left[\|\mathbf{e}_{t+1}\|^2\right]
		 &\leq (1-\beta_t)^2 \mathbb{E}\left[\|\mathbf{e}_t\|^2\right] + c_t \mathbb{E}\left[\|{\Theta}_{t+1} - {\Theta}_{t}\|^2\right]\notag\\
		  &\quad + 6\sigma^2 \beta_t^2+12(1+\gamma)^2 \beta_t^2 \mathbb{E}\left[\|\Theta_t - \bar{\Theta}_t\|^2\right] \notag\\
		  &\quad+ 12(1+\gamma)^2n \beta_t^2 \mathbb{E}\left[\|\bar{\theta}_t - \theta^\ast\|_2^2\right], \label{e56}
	\end{align}}
where $c_t:=2(1+\gamma)^2(1+3\beta_t^2)$. 
	
	For the second term on the right-hand side of (\ref{e56}),
	{\small\begin{align*}
		&c_t \mathbb{E}\left[\left\|\Theta_{t+1} - \Theta_{t}\right\|^2\right] \\
		&= {c_t} \mathbb{E}\left[\|(\mathbf{W}-\mathbf{I})\Theta_t - \alpha_t \mathbf{Y}_t\|^2\right]\\
		&= c_t\mathbb{E}\left[\left\|(\mathbf{\mathbf{W}}-\mathbf{\mathbf{I}})(\Theta_t - \bar{\Theta}_t) - \alpha_t (\mathbf{Y}_t - \bar{\mathbf{Y}}_t + \bar{\mathbf{Y}}_t - \bar{\mathbf{G}}_t + \bar{\mathbf{G}}_t)\right\|^2\right]\\
		&\leq 4c_t \left( \|\mathbf{W}-\mathbf{I}\|^2 \mathbb{E}[\|\Theta_t - \bar{\Theta}_t\|^2] + \alpha_t^2 \mathbb{E}\left[\|\mathbf{Y}_t - \bar{\mathbf{Y}}_t\|^2\right] \right.\\
		&\quad\left.+ \alpha_t^2 \mathbb{E}\left[\|\bar{\mathbf{Y}}_t - \bar{\mathbf{G}}_t\|^2\right] + \alpha_t^2 \mathbb{E}\left[\|\bar{\mathbf{G}}_t\|^2\right]\right),
	\end{align*}
	}Notice that $\bar{\mathbf{Y}}_t  = \frac{\mathbf{v} \mathbf{1}^\top}{n} \mathbf{Q}_t$,
	\begin{align*}
		\mathbb{E}\left[\|\bar{\mathbf{Y}}_t - \bar{\mathbf{G}}_t\|^2\right]
		&= \mathbb{E}\left[\left\|\frac{\mathbf{v} \mathbf{1}^\top}{n} (\mathbf{Q}_t - \mathbf{G}_t^{\mathbb{E}})\right\|^2\right]\\
		&\leq \left\|\frac{\mathbf{v} \mathbf{1}^\top}{n}\right\|_2^2 \mathbb{E}\left[\|\mathbf{Q}_t - \mathbf{G}_t^{\mathbb{E}}\|^2\right]\\
		&= \mathbb{E}\left[\|\mathbf{Q}_t - \mathbf{G}_t^{\mathbb{E}}\|^2\right] = \mathbb{E}\left[\|\mathbf{e}_t\|^2\right].
	\end{align*}
	On the other hand,
	{\small\begin{align*}
		\mathbb{E}[\|\bar{\mathbf{G}}_t\|^2] &= \mathbb{E}[\|\bar{\mathbf{G}}_t - \bar{\mathbf{G}}^*\|^2]\\
		 &\leq \left\|\frac{\mathbf{v} \mathbf{1}^\top}{n}\right\|_2^2 \mathbb{E}[\|\mathbf{G}_t^{\mathbb{E}} - \mathbf{G}^\ast\|^2]\\&\leq (1+\gamma)^2 \mathbb{E}[\|\Theta_t - \Theta^\ast\|^2]\\&\leq 2(1+\gamma)^2 \mathbb{E}[\|\Theta_t - \bar{\Theta}_t\|^2] + 2(1+\gamma)^2 n\mathbb{E}[\|\bar{\theta}_t - \theta^\ast\|_2^2],
	\end{align*}
	}where $\mathbf{G}^\ast := \left[ g^1(\theta^\ast), \dots, g^n(\theta^\ast) \right]^\top$, the first equality is obtained by the fact that $\bar{\mathbf{G}}^\ast = \frac{\mathbf{v} \mathbf{1}^\top}{n} \mathbf{G}^\ast = \left[ v_{1}\bar{g}(\theta^*), \dots, v_{n} \bar{g}(\theta^*) \right]^\top = \mathbf{0}$, and the last inequality is obtained by the $(1+\gamma)$-Lipschitz continuity of $g(\cdot;\xi^i_{t+1})$. Then
	\begin{align*}
		&c_t\mathbb{E}\left[\left\|\Theta_{t+1} - \Theta_{t}\right\|^2\right]\\
		&\leq 4c_t \left( \|\mathbf{W}-\mathbf{I}\|^2 \mathbb{E}[\|\Theta_t - \bar{\Theta}_t\|^2] + \alpha_t^2 \mathbb{E}\left[\|\mathbf{Y}_t - \bar{\mathbf{Y}}_t\|^2\right] \right.\\
		&\quad\left.+ \alpha_t^2 \mathbb{E}\left[\|\mathbf{e}_t\|^2\right] + 2\alpha_t^2(1+\gamma)^2 \mathbb{E}[\|\Theta_t - \bar{\Theta}_t\|^2]\right.\\
		&\quad \left.+ 2\alpha_t^2(1+\gamma)^2 n\mathbb{E}[\|\bar{\theta}_t - \theta^\ast\|_2^2] \right).
	\end{align*}
	
	Substituting the above inequality into (\ref{e56}),
	{\small\begin{align}
		&\mathbb{E}\left[\|\mathbf{e}_{t+1}\|^2\right]\notag\\
		 &\leq (1-\beta_t)^2 \mathbb{E}\left[\|\mathbf{e}_t\|^2\right] + \left[12(1+\gamma)^2 \beta_t^2\right.\notag\\
		  &\quad\left.+ 4c_t (\|\mathbf{W}-\mathbf{I}\|^2 + 2(1+\gamma)^2 \alpha_t^2)\right] \mathbb{E}\left[\|\Theta_t - \bar{\Theta}_t\|^2\right] \notag\\
		&\quad + 4c_t \alpha_t^2 \mathbb{E}\left[\|\mathbf{Y}_t - \bar{\mathbf{Y}}_t\|^2\right] + 4c_t \alpha_t^2 \mathbb{E}\left[\|\mathbf{e}_t\|^2\right] + 6\sigma^2 \beta_t^2 \notag \\
		&\quad+ \left[12(1+\gamma)^2 n\beta_t^2 + 8c_t (1+\gamma)^2 n\alpha_t^2\right] \mathbb{E}\left[\|\bar{\theta}_t - \theta^\ast\|^2\right] \notag \\
		&\leq (1-\beta_t)^2 \mathbb{E}\left[\|\mathbf{e}_t\|^2\right] + 4(1+\gamma)^2(1+8\|\mathbf{W}-\mathbf{I}\|^2) \mathbb{E}\left[\|\Theta_t - \bar{\Theta}_t\|^2\right] \notag \\
		&\quad+ 16(1+\gamma)^2 \alpha_t^2 \mathbb{E}\left[\|\mathbf{Y}_t - \bar{\mathbf{Y}}_t\|^2\right] + 16(1+\gamma)^2 \alpha_t^2 \mathbb{E}\left[\|\mathbf{e}_t\|^2\right]\notag \\
		&\quad + 6\sigma^2 \beta_t^2 + 4(1+\gamma)^2n(3\beta_t^2 + 8 (1 + \gamma)^2\alpha_t^2) \mathbb{E}\left[\|\bar{\theta}_t - \theta^\ast\|^2\right]\notag\\
		&\leq (1 - 2 \beta_t  + \beta_t^2 + 16(1+\gamma)^2\alpha_t^2) \mathbb{E} \left[ \| \mathbf{e}_t \|^2 \right] \notag\\
		&\quad+ 4(1+\gamma)^2(1+8 \| \mathbf{W}-\mathbf{I} \|^2) \bar{c}^2\mathbb{E} \left[ \| \Theta_t - \bar{\Theta}_t \|^2_{\widehat{\mathbf{W}}} \right] \notag\\
		&\quad+ 6 \sigma^2\beta^2_t
		+ 16(1+\gamma)^2 \alpha_t^2 \bar{c}^2\mathbb{E} \left[ \| \mathbf{Y}_t - \bar{\mathbf{Y}}_{t} \|^2_{\widehat{\mathbf{M}}} \right]\notag \\
		&\quad+ { 4(1+\gamma)^2 n (3\beta_t^2 + 8 (1 + \gamma)^2 n \alpha_t^2 )} \mathbb{E} \left[ \| \bar{\theta}_t - \theta^* \|_2^2 \right],\notag
	\end{align}}
	where the second inequality is obtained by the fact that $\beta_t^2 \leq \frac{1}{3}$, $\alpha_t^2 \leq \frac{\|\mathbf{W}-\mathbf{I}\|^2}{2(1+\gamma)^2}$ and last inequality is obtained by Lemma \ref{lem99} ($ii$). The proof is complete.
\end{proof}

\begin{lem}\label{lem3}
	Suppose that Assumptions \ref{ass2}, \ref{ass4} hold and $\beta_t\le\frac{1}{2}$, $\alpha_t^2 \leq \frac{\|\mathbf{W}-\mathbf{I}\|^2}{2(1+\gamma)^2}$. Then for any $t \ge 0$,
	{\small\begin{align*}
		&\mathbb{E}\left[\|{\mathbf{Y}}_{t+1} - \bar{\mathbf{Y}}_{t+1}\|^2_{\widehat{\mathbf{M}}}\right]\\
		 &\leq \left(1 - \rho_\mathbf{M} + \frac{24\bar{c}^2}{\rho_\mathbf{M}} {(1+\gamma)^2 \alpha_t^2}\right) \mathbb{E}\left[\|{\mathbf{Y}}_t - \bar{\mathbf{Y}}_t\|^2_{\widehat{\mathbf{M}}}\right]\\
		 &\quad + \frac{3\bar{c}^2}{\rho_\mathbf{M}}(\beta_t^2 + 8\alpha_t^2) \mathbb{E}[\|\mathbf{e}_{t}\|^2] \\
		&\quad+ \frac{48\bar{c}^4}{\rho_\mathbf{M}} \|\mathbf{W}-\mathbf{I}\|_{\widehat{\mathbf{W}}}^2     (1+\gamma)^2 \mathbb{E}\left[\|\Theta_t - \bar{\Theta}_t\|_{\widehat{\mathbf{W}}}^2\right]\\
		&\quad+ \frac{48\bar{c}^2}{\rho_\mathbf{M}} (1+\gamma)^4 n \alpha_t^2 \mathbb{E}[\|\bar{\theta}_t - \theta^\ast\|_2^2] + \frac{9\bar{c}^2}{\rho_\mathbf{M}} \sigma^2 \beta_t^2,   
	\end{align*}
}where $\sigma^2$ is defined in Lemma \ref{lem2}, $\rho_{\mathbf{M}}$ and $\bar{c}$ are defined in Lemma \ref{lem99}.
\end{lem}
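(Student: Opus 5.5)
The plan is to run the standard gradient-tracking contraction argument in the $\widehat{\mathbf{M}}$-norm, and then reuse the increment estimates already obtained in the proof of Lemma \ref{lem2}.

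\textbf{Step 1: the tracking-error recursion.} Since $\mathbf{1}^\top\mathbf{M}=\mathbf{1}^\top$, we have $\frac{\mathbf{v}\mathbf{1}^\top}{n}\mathbf{M}=\frac{\mathbf{v}\mathbf{1}^\top}{n}$. Because $\mathbf{M}\mathbf{v}=\mathbf{v}$ (with $\mathbf{1}^\top\mathbf{v}=n$), we also have $\mathbf{M}\frac{\mathbf{v}\mathbf{1}^\top}{n}=\frac{\mathbf{v}\mathbf{1}^\top}{n}$. It follows that
\begin{align*}
\mathbf{Y}_{t+1}-\bar{\mathbf{Y}}_{t+1}=\Big(\mathbf{M}-\tfrac{\mathbf{v}\mathbf{1}^\top}{n}\Big)(\mathbf{Y}_t-\bar{\mathbf{Y}}_t)+\Big(\mathbf{M}-\tfrac{\mathbf{v}\mathbf{1}^\top}{n}\Big)(\mathbf{Q}_{t+1}-\mathbf{Q}_t).
\end{align*}
Apply Young's inequality $\|a+b\|^2\le(1+\epsilon)\|a\|^2+(1+1/\epsilon)\|b\|^2$ with $\epsilon=\rho_\mathbf{M}/(1-\rho_\mathbf{M})$, together with Lemma \ref{lem99}(iii). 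This gives
\begin{align*}
\|\mathbf{Y}_{t+1}-\bar{\mathbf{Y}}_{t+1}\|^2_{\widehat{\mathbf{M}}}\le(1-\rho_\mathbf{M})\|\mathbf{Y}_t-\bar{\mathbf{Y}}_t\|^2_{\widehat{\mathbf{M}}}+\tfrac{1}{\rho_\mathbf{M}}\Big\|\Big(\mathbf{M}-\tfrac{\mathbf{v}\mathbf{1}^\top}{n}\Big)(\mathbf{Q}_{t+1}-\mathbf{Q}_t)\Big\|^2_{\widehat{\mathbf{M}}}.
\end{align*}
Passing from the $\widehat{\mathbf{M}}$-norm to the Frobenius norm via Lemma \ref{lem99}(ii) bounds the last term by a constant times $\frac{\bar{c}^2}{\rho_\mathbf{M}}\|\mathbf{Q}_{t+1}-\mathbf{Q}_t\|^2$. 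This accounts for the $\bar{c}^2/\rho_\mathbf{M}$ prefactor in every term of the statement.

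\textbf{Step 2: the increment $\mathbf{Q}_{t+1}-\mathbf{Q}_t$.} From (\ref{alg-1}),
\begin{align*}
\mathbf{Q}_{t+1}-\mathbf{Q}_t=-\beta_t\mathbf{Q}_t+(1-\beta_t)(\mathbf{G}_{t+1}-\widetilde{\mathbf{G}}_{t+1})+\beta_t\mathbf{G}_{t+1}.
\end{align*}
Write $-\beta_t\mathbf{Q}_t=\beta_t\mathbf{e}_t-\beta_t\mathbf{G}_t^{\mathbb{E}}$, and combine $\beta_t(\mathbf{G}_{t+1}-\mathbf{G}_t^{\mathbb{E}})$ into a single term. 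This splits the increment into three pieces: $\beta_t\mathbf{e}_t$, $(1-\beta_t)(\mathbf{G}_{t+1}-\widetilde{\mathbf{G}}_{t+1})$, and $\beta_t(\mathbf{G}_{t+1}-\mathbf{G}^{\mathbb{E}}_t)$. Expanding with $\|a+b+c\|^2\le3(\cdot)$ gives the factors of $3$ in the statement. The three pieces are handled as follows.
\begin{itemize}
\item The first piece contributes $3\beta_t^2\mathbb{E}\|\mathbf{e}_t\|^2$.
\item The second piece is bounded by $(1+\gamma)$-Lipschitzness, giving $(1+\gamma)^2\mathbb{E}\|\Theta_{t+1}-\Theta_t\|^2$.
\item The third piece is split around $\theta^*$: its noise part gives $\sigma^2$, and its remainder is controlled by $(1+\gamma)^2\|\Theta_{t+1}-\Theta^*\|^2$. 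This is exactly as in (\ref{e3}), which produces the $9\sigma^2\beta_t^2$ term.
\end{itemize}

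\textbf{Step 3: closing the estimate.} The difference $\|\Theta_{t+1}-\Theta_t\|^2$ is bounded exactly as in the proof of Lemma \ref{lem2}. One decomposes $(\mathbf{W}-\mathbf{I})(\Theta_t-\bar\Theta_t)-\alpha_t(\mathbf{Y}_t-\bar{\mathbf{Y}}_t+\bar{\mathbf{Y}}_t-\bar{\mathbf{G}}_t+\bar{\mathbf{G}}_t)$ and uses $\|\bar{\mathbf{Y}}_t-\bar{\mathbf{G}}_t\|\le\|\mathbf{e}_t\|$ and $\bar{\mathbf{G}}^*=\mathbf{0}$. This yields terms of the form $\alpha_t^2\|\mathbf{Y}_t-\bar{\mathbf{Y}}_t\|^2$, $\alpha_t^2\|\mathbf{e}_t\|^2$, $\|\mathbf{W}-\mathbf{I}\|^2\|\Theta_t-\bar\Theta_t\|^2$ and $\alpha_t^2(1+\gamma)^2n\|\bar\theta_t-\theta^*\|^2$, each with a constant of $8$ to $16$ from the four-way Young split. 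The $\beta_t^2$-weighted terms in $\|\Theta_t-\bar\Theta_t\|^2$ and $\|\bar\theta_t-\theta^*\|^2$ are absorbed using $\beta_t\le\frac12$ and $\alpha_t^2\le\|\mathbf{W}-\mathbf{I}\|^2/(2(1+\gamma)^2)$. Finally, the Frobenius norms are converted back to the $\widehat{\mathbf{W}}$- and $\widehat{\mathbf{M}}$-norms with one more factor of $\bar{c}^2$, giving the $\bar{c}^4$ in front of the consensus term.

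\textbf{Main obstacle.} The difficulty is the constant bookkeeping rather than any conceptual step. One must collect the coefficients $24(1+\gamma)^2\alpha_t^2$, $\beta_t^2+8\alpha_t^2$, $48\|\mathbf{W}-\mathbf{I}\|^2_{\widehat{\mathbf{W}}}(1+\gamma)^2$ and $48(1+\gamma)^4n\alpha_t^2$, and check that the $\beta_t^2\|\bar\theta_t-\theta^*\|^2$ and $\beta_t^2\|\Theta_t-\bar\Theta_t\|^2$ contributions are dominated under the step-size conditions. A further subtlety is that the statement has no $\beta_t^2n\|\bar\theta_t-\theta^*\|^2$ term. Handling this may require bounding $\|\mathbf{G}_{t+1}\|$ conditionally, via unbiasedness as in (\ref{e3}), rather than by the crude split. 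If that does not work, I would accept a slightly larger constant in the $\|\bar\theta_t-\theta^*\|^2$ coefficient.
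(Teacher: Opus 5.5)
Your Steps 1--2 follow the paper's proof closely. You use the same Young split with $\epsilon=\rho_{\mathbf{M}}/(1-\rho_{\mathbf{M}})$ and essentially the same three-way split of $\mathbf{Q}_{t+1}-\mathbf{Q}_t$. You attach the drift $\beta_t(\mathbf{G}^{\mathbb{E}}_{t+1}-\mathbf{G}^{\mathbb{E}}_t)$ to the noise rather than to $(1-\beta_t)(\mathbf{G}_{t+1}-\widetilde{\mathbf{G}}_{t+1})$; this is harmless, since $\mathbb{E}\|\mathbf{G}_{t+1}-\mathbf{G}^{\mathbb{E}}_t\|^2=\mathbb{E}\|\mathbf{G}_{t+1}-\mathbf{G}^{\mathbb{E}}_{t+1}\|^2+\mathbb{E}\|\mathbf{G}^{\mathbb{E}}_{t+1}-\mathbf{G}^{\mathbb{E}}_t\|^2$ under i.i.d.\ sampling. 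You also reuse (\ref{e3}) and the bound on $\|\Theta_{t+1}-\Theta_t\|^2$ from Lemma \ref{lem2}, as the paper does.

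\textbf{The gap.} It is your Step 3 claim that the $\beta_t^2$-weighted terms ``are absorbed using $\beta_t\le\frac12$ and $\alpha_t^2\le\|\mathbf{W}-\mathbf{I}\|^2/(2(1+\gamma)^2)$.'' Through (\ref{e3}), the noise piece produces a term of order $\frac{\bar c^2}{\rho_{\mathbf{M}}}(1+\gamma)^2n\beta_t^2\,\mathbb{E}\|\bar\theta_t-\theta^*\|_2^2$. The statement only offers $\frac{48\bar c^2}{\rho_{\mathbf{M}}}(1+\gamma)^4n\alpha_t^2\,\mathbb{E}\|\bar\theta_t-\theta^*\|_2^2$. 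The hypotheses impose no relation between $\beta_t$ and $\alpha_t$ (even $\alpha_t=0$ is allowed), so no constant lets an $\alpha_t^2$ coefficient dominate a $\beta_t^2$ one.

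The companion term $\beta_t^2(1+\gamma)^2\mathbb{E}\|\Theta_t-\bar\Theta_t\|^2$ can be bounded by an $O(1)$ constant via $\beta_t\le\frac12$. It cannot be bounded by the displayed $48\|\mathbf{W}-\mathbf{I}\|^2$ coefficient unless $\beta_t\lesssim\|\mathbf{W}-\mathbf{I}\|$.

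Neither of your fallbacks closes this. (\ref{e3}) already uses unbiasedness, and the dependence on $\theta$ is intrinsic: the semigradient noise contains the multiplicative part $(A(\xi)-A)\theta$, whose variance grows like $\|\theta-\theta^*\|^2$. Enlarging the constant on $\alpha_t^2$ fails for the reason just given.

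\textbf{The paper has the same hole.} Its intermediate display contains $\frac{18\bar c^2}{\rho_{\mathbf{M}}}(1+\gamma)n\beta_t^2\,\mathbb{E}\|\bar\theta_t-\theta^*\|_2^2$ and $\frac{18\bar c^2}{\rho_{\mathbf{M}}}(1+\gamma)^2\beta_t^2\,\mathbb{E}\|\Theta_t-\bar\Theta_t\|^2$. Both vanish in the next line, with only $\beta_t^2\le\frac13$ cited.

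In fact the inequality as written appears to fail. Take $\alpha_t\equiv0$, a small constant $\beta$, and a consensual $\theta_0$ far from $\theta^*$.
\begin{itemize}
\item Then $\Theta_t$ never moves, and $\mathbf{Q}_{t+1}-\mathbf{Q}_t=\beta\mathbf{e}_t+\beta(\mathbf{G}_{t+1}-\mathbf{G}^{\mathbb{E}}_{t+1})$, where the noise is fresh and independent.
\item Hence $\mathbb{E}\|\mathbf{Y}_{t+1}-\bar{\mathbf{Y}}_{t+1}\|^2_{\widehat{\mathbf{M}}}\ge\beta^2\,\mathbb{E}\|(\mathbf{M}-\frac{\mathbf{v}\mathbf{1}^\top}{n})(\mathbf{G}_{t+1}-\mathbf{G}^{\mathbb{E}}_{t+1})\|^2_{\widehat{\mathbf{M}}}$.
\item This lower bound is of order $\beta^2\|\theta_0-\theta^*\|^2$ whenever $\mathbf{M}\mathbf{1}\neq\mathbf{v}$ and $A(\xi)$ is genuinely random. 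On a truly directed graph, the common-mode noise $\mathbf{1}((A(\xi)-A)\theta_0)^\top$ is not annihilated.
\item Meanwhile $\mathbb{E}\|\mathbf{e}_t\|^2\to\frac{\beta}{2-\beta}\mathbb{E}\|\mathbf{G}_{t+1}-\mathbf{G}^{\mathbb{E}}_{t+1}\|^2$. Iterating the claimed recursion would therefore force $\limsup_t\mathbb{E}\|\mathbf{Y}_t-\bar{\mathbf{Y}}_t\|^2_{\widehat{\mathbf{M}}}$ to be of order $\frac{\bar c^2}{\rho_{\mathbf{M}}^2}(\beta^3\|\theta_0-\theta^*\|^2+\beta^2\sigma^2)$.
\end{itemize}
This is a contradiction for small $\beta$ and large $\|\theta_0-\theta^*\|$.

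\textbf{Repair.} Keep a term like $\frac{18\bar c^2}{\rho_{\mathbf{M}}}(1+\gamma)^2n\beta_t^2\,\mathbb{E}\|\bar\theta_t-\theta^*\|_2^2$, plus an $O(1)$ consensus term, in the conclusion. Alternatively, assume $\beta_t=\hat c\alpha_t$ and let the $\alpha_t^2$ coefficient depend on $\hat c^2$. Either way this does no harm to Theorem \ref{t1}, which sets $\beta_t=\hat c\alpha_t$ anyway; the cost is a $\hat c$-dependent $C_{24}$.
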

\begin{proof}
	By the definition of $\mathbf{Y}_t$ in (\ref{ea}),
	{\small\begin{align}
		\mathbf{Y}_{t+1} - \bar{\mathbf{Y}}_{t+1} = \left( \mathbf{M} - \frac{\mathbf{v}\mathbf{1}^\top}{n} \right) ( (\mathbf{Y}_t - \bar{\mathbf{Y}}_t) + \mathbf{Q}_{t+1} - \mathbf{Q}_t).\label{e4}
	\end{align}
	}Taking $\| \cdot \|^2_{\widehat{\mathbf{M}}}$ on both sides of (\ref{e4}),
	{\small\begin{align}
		&\|\mathbf{Y}_{t+1} - \bar{\mathbf{Y}}_{t+1}\|_{\widehat{\mathbf{M}}}^2 \notag\\
		&\leq (1+\epsilon) \left\|\left(\mathbf{M} - \frac{\mathbf{v}\mathbf{1}^\top}{n}\right) (\mathbf{Y}_t - \bar{\mathbf{Y}}_t)\right\|_{\widehat{\mathbf{M}}}^2 \notag\\
		&\quad+ \left(1+\frac{1}{\epsilon}\right) \left\|\left(\mathbf{M} - \frac{\mathbf{v}\mathbf{1}^\top}{n}\right) (\mathbf{Q}_{t+1} - \mathbf{Q}_t)\right\|_{\widehat{\mathbf{M}}}^2 \notag \\
		&\leq (1+\epsilon)(1-\rho_{\mathbf{M}})^2 \|\mathbf{Y}_t - \bar{\mathbf{Y}}_t\|_{\widehat{\mathbf{M}}}^2 \notag\\
		&\quad+ \left(1+\frac{1}{\epsilon}\right)(1-\rho_{\mathbf{M}})^2 \|\mathbf{Q}_{t+1} - \mathbf{Q}_t\|_{\widehat{\mathbf{M}}}^2 \notag \\
		&\leq (1-\rho_{\mathbf{M}}) \|\mathbf{Y}_t - \bar{\mathbf{Y}}_t\|_{\widehat{\mathbf{M}}}^2 + \frac{1}{\rho_\mathbf{M}} \|\mathbf{Q}_{t+1} - \mathbf{Q}_t\|_{\widehat{\mathbf{M}}}^2, \label{e5}
	\end{align}
	}where the first inequality follows from the fact that $(a+b)^2 \leq \left(1+\frac{1}{\epsilon}\right) b^2 + (1+\epsilon) a^2$, $\forall a, b, \epsilon > 0$, and the last inequality is obtained by setting $\epsilon=\frac{\rho_{\mathbf{M}}}{1-\rho_{\mathbf{M}}}$.
	
	For the second term on the right-hand side of (\ref{e5}),
	\begin{align*}
		&\frac{1}{\rho_{\mathbf{M}}} \|\mathbf{Q}_{t+1} - \mathbf{Q}_t\|_{\widehat{\mathbf{M}}}^2\\ &= \frac{1}{\rho_{\mathbf{M}}} \| -\beta_t \mathbf{Q}_t - (1-\beta_t) \widetilde{\mathbf{G}}_{t+1} + \mathbf{G}_{t+1} \|_{\widehat{\mathbf{M}}}^2 \\
		&\leq \frac{\bar{c}^2}{\rho_{\mathbf{M}}} \| -\beta_t \mathbf{Q}_t - (1-\beta_t) \widetilde{\mathbf{G}}_{t+1} + \mathbf{G}_{t+1} \|^2 \\
		&= \frac{\bar{c}^2}{\rho_{\mathbf{M}}} \| -\beta_t (\mathbf{Q}_t - \mathbf{G}_t^{\mathbb{E}}) - (1-\beta_t) (\widetilde{\mathbf{G}}_{t+1} - \mathbf{G}_{t+1})\\
		&\quad + \beta_t (\mathbf{G}_{t+1}- \mathbf{G}_{t+1}^{\mathbb{E}}) +\beta_t( \mathbf{G}_{t+1}^{\mathbb{E}}-\mathbf{G}_t^{\mathbb{E}}) \|^2\\
		&\leq \frac{3\bar{c}^2}{\rho_{\mathbf{M}}} \left( \beta_t^2 \|\mathbf{e}_t\|^2 + \beta_t^2 \|\mathbf{G}_{t+1} - \mathbf{G}_{t+1}^{\mathbb{E}}\|^2\right.\\
		&\quad \left.+ \|(1-\beta_t) (\widetilde{\mathbf{G}}_{t+1} - \mathbf{G}_{t+1}) + \beta_t (\mathbf{G}_{t+1}^{\mathbb{E}} - \mathbf{G}_t^{\mathbb{E}}) \|^2 \right),
	\end{align*}
	where the first inequality is obtained by Lemma \ref{lem99} ($ii$) and the last inequality is obtained by Young’s inequality. Then
	{\small\begin{align}
		&\mathbb{E} \left[\|\mathbf{Y}_{t+1}-\bar{\mathbf{Y}}_{t+1}\|_{\widehat{\mathbf{M}}}\right]\notag\\
		 &\leq (1-\rho_{\mathbf{M}})\mathbb{E} \left[\|\mathbf{Y}_{t}- \bar{\mathbf{Y}}_{t}\|_{\widehat{\mathbf{M}}}\right]+\frac{3\bar{c}^2}{\rho_{\mathbf{M}}}\beta_t^2\mathbb{E}[\|\mathbf{e}_t\|^2]\notag\\ &\quad+\frac{3\bar{c}^2}{\rho_{\mathbf{M}}}\beta_t^2\mathbb{E}\left[\| \bar{\mathbf{G}}_{t+1}-\mathbf{G}^{\mathbb{E}}_{t+1} \|\right]\notag\\
		 &+\frac{3\bar{c}^2}{\rho_{\mathbf{M}}}\mathbb{E}\left[\| (1-\beta_t)(\widetilde{\mathbf{G}})_{t+1}-\mathbf{G}_{t+1}+\beta_t(\mathbf{G}_{t+1}^{\mathbb{E}}-\mathbf{G}_t^{\mathbb{E}}) \|\right]. \label{e6}
	\end{align}}

	For the third term on the right-hand side of (\ref{e6}), (\ref{e3}) implies that
	{\small\begin{align}
		&\frac{3 \bar{c}^{2}}{\rho_{\mathbf{M}}} \beta_{t}^{2} \mathbb{E}\left[\left\|\mathbf{G}_{t+1}-\mathbf{G}^{\mathbb{E}}_{t+1}\right\|^{2}\right]\notag\\
		 &\leq\frac{9 \bar{c}^{2}}{\rho_{\mathbf{M}}}(1+\gamma) \beta_{t}^{2} \mathbb{E}\left[\left\|{\Theta}_{t+1}-{\Theta}_{t}\right\|^{2}\right]\notag\\
		 &\quad+\frac{18 \bar{c}^{2}}{\rho_{\mathbf{M}}}(1+\gamma) ^{2} \beta_{t}^{2} \mathbb{E}\left[\left\|{\Theta}_{t}-\bar{{\Theta}}_{t}\right\|^{2}\right] \notag\notag
		\\&\quad+ \frac{18 \bar{c}^{2}}{\rho_{\mathbf{M}}}(1+\gamma)^2n \beta_{t}^{2} \mathbb{E}\left[\left\|\bar{{\theta}}_{t}-{\theta}^{*}\right\|_2^{2}\right]+\frac{9 \bar{c}^{2}}{\rho_{\mathbf{M}}} \sigma^{2} \beta_{t}^{2}.\notag
	\end{align}}
	
	For the last term on the right-hand side of (\ref{e6}),
	{\small\begin{align}
		&\frac{3 \bar{c}^{2}}{\rho_{\mathbf{M}}} \mathbb{E}\left[\left\|(1-\beta_{t})\left(\widetilde{\mathbf{G}}_{t+1}-\mathbf{G}_{t+1}\right)+\beta_{t}\left(\mathbf{G}^{\mathbb{E}}_{t+1}-\mathbf{G}^{\mathbb{E}}_{t}\right)\right\|^{2}\right] \notag\\
		&=\frac{3 \bar{c}^{2}}{\rho_{\mathbf{M}}}(1-\beta_{t})^{2} \mathbb{E}\left[\left\|\widetilde{\mathbf{G}}_{t+1}-\mathbf{G}_{t+1}\right\|^{2}\right]\notag\\
		&\quad - \left[(1-\beta_{t})\beta_{t}-\beta_{t}^{2}\right] \mathbb{E}\left[\left\|\mathbf{G}^{\mathbb{E}}_{t+1}-\mathbf{G}^{\mathbb{E}}_{t}\right\|^{2}\right] \notag\\
		&\le \frac{3 \bar{c}^2}{\rho_{\mathbf{M}}}(1-\beta_{t})^{2} \mathbb{E}\left[\left\|\widetilde{\mathbf{G}}_{t+1}-\mathbf{G}_{t+1}\right\|^{2}\right] \notag\\
		&\le \frac{3 \bar{c}^{2}}{\rho_{\mathbf{M}}}(1+\gamma)^{2} \mathbb{E}\left[\left\|{\Theta}_{t+1}-{\Theta}_{t}\right\|^{2}\right],\notag
	\end{align}}where the equality is obtained by the unbiasedness of the stochastic semigradient, the first inequality is obtained by the fact that $\beta_t\le\frac{1}{2}$ and the last inequality is obtained by the $(1+\gamma)$-Lipschitz continuity of $g(\cdot;\xi^i_{t+1})$. Then
	{\small\begin{align*}
		&\mathbb{E}\left[\left\|\mathbf{Y}_{t+1}-\bar{\mathbf{Y}}_{t+1}\right\|_{\widehat{\mathbf{M}}}^{2}\right] \\
		&\leq (1-\rho_{\mathbf{M}}) \mathbb{E}\left[\left\|\mathbf{Y}_{t}-\bar{\mathbf{Y}}_{t}\right\|_{\widehat{\mathbf{M}}}^{2}\right]+ \frac{9 \bar{c^{2}}}{\rho_{\mathbf{M}}} \sigma^{2} \beta_{t}^{2}\\
		&\quad+ \frac{3 \bar{c}^{2}}{\rho_{\mathbf{M}}} \beta_{t}^{2} \mathbb{E}\left[\left\|\mathbf{e}_{t}\right\|^{2}\right] + \frac{3 \bar{c}^{2}}{\rho_{\mathbf{M}}}(1+\gamma)^2 (1+3\beta_{t}^{2}) \mathbb{E}\left[\left\|{\Theta}_{t+1}-{\Theta}_{t}\right\|^{2}\right] \\
		&\quad +\frac{18 \bar{c}^{2}}{\rho_{\mathbf{M}}}(1+\gamma)^2  \beta_{t}^{2} \mathbb{E}\left[\left\|{\Theta}_{t}-\bar{{\Theta}}_{t}\right\|^{2}\right] \notag\\
		&\quad+ \frac{18 \bar{c}^{2}}{\rho_{\mathbf{M}}}(1+\gamma) n \beta_{t}^{2} \mathbb{E}\left[\left\|\bar{{\theta}}_{t}-{\theta}^*\right\|_2^{2}\right] \\
		&\leq (1-\rho_{\mathbf{M}}) \mathbb{E}\left[\left\|\mathbf{Y}_{t}-\bar{\mathbf{Y}}_{t}\right\|_{\widehat{\mathbf{M}}}^{2}\right] + \frac{3 \bar{c}^{2}}{\rho_{\mathbf{M}}}(\beta_{t}^{2} + 8 \alpha_{t}^{2}) \mathbb{E}\left[\left\|\mathbf{e}_{t}\right\|^{2}\right] \\
		&\quad + \frac{24 \bar{c}^{2}}{\rho_{\mathbf{M}}}(1+\gamma)^{2} \left(\|\mathbf{W}-\mathbf{I}\|^{2} + 2 (1+\gamma)^2\alpha^2\right) \mathbb{E}\left[\left\|{\Theta}_{t}-\bar{{\Theta}}_{t}\right\|^{2}\right]  \\
		&\quad + \frac{9 \bar{c}^{2}}{\rho_{\mathbf{M}}} \sigma^{2} \beta_{t}^{2}+ \frac{24 \bar{c}^{2}}{\rho_{\mathbf{M}}}(1+\gamma)^{2} \alpha_{t}^{2} \mathbb{E}\left[\left\|\mathbf{Y}_{t}-\bar{\mathbf{Y}}_{t}\right\|^{2}\right] \\
		&\quad+ \frac{48 \bar{c}^{2}}{\rho_{\mathbf{M}}}(1+\gamma)^4 n  \alpha_{t}^{2} \mathbb{E}\left[\left\|\bar{{\theta}}_{t}-{\theta}^{*}\right\|_2^{2}\right],\\
		&\leq \left(1 - \rho_\mathbf{M} + \frac{24\bar{c}^2}{\rho_\mathbf{M}} {(1+\gamma)^2 \alpha_t^2}\right) \mathbb{E}[\|{\mathbf{Y}}_t - \bar{\mathbf{Y}}_t\|^2]\\
		&\quad + \frac{3\bar{c}^2}{\rho_\mathbf{M}}(\beta_t^2 + 8\alpha_t^2) \mathbb{E}[\|\mathbf{e}_{t}\|^2] \\
		&\quad+ \frac{48\bar{c}^2}{\rho_\mathbf{M}} \|\mathbf{W}-\mathbf{I}\|^2     (1+\gamma)^2 \mathbb{E}\left[\|\Theta_t - \bar{\Theta}_t\|_{\widehat{\mathbf{W}}}^2\right]\\
		&\quad+ \frac{48\bar{c}^4}{\rho_\mathbf{M}} (1+\gamma)^4 n \alpha_t^2 \mathbb{E}[\|\bar{\theta}_t - \theta^\ast\|_2^2] + \frac{9\bar{c}^2}{\rho_\mathbf{M}} \sigma^2 \beta_t^2,
	\end{align*}
	}where the second inequality is obtained by the fact that $\beta_t^2\le\frac{1}{3}$, the last inequality is obtained by the fact that $\alpha^2_t\leq\frac{\|\mathbf{W}-\mathbf{I}\|^2}{2(1+\gamma)^2}$ and lemma \ref{lem99} ($ii$). The proof is complete.
\end{proof}
\begin{lem}\label{lem4}
	Suppose that Assumptions \ref{ass2}, \ref{ass4} hold. Then for any $t \ge 0$,
	\begin{align*}
		&\mathbb{E}\left[\left\|{\Theta}_{t+1}-\bar{{\Theta}}_{t+1}\right\|_{\widehat{\mathbf{M}}}^{2}\right] \\
		&\leq \left(1-\rho_{\mathbf{W}}+\frac{2 \bar{c}^{4}}{\rho_{\mathbf{W}}}(1+\gamma)^{2} \alpha_{t}^{2}\right) \mathbb{E}\left[\left\|{{\Theta}}_{t}-\bar{{{\Theta}}}_{t}\right\|_{\widehat{\mathbf{M}}}^{2}\right]\\
		&\quad + \frac{3 \bar{c}^{4}}{\rho_{\mathbf{W}}} \alpha_{t}^{2} \mathbb{E}\left[\left\|\mathbf{Y}_{t}-\bar{\mathbf{Y}}_{t}\right\|_{\widehat{\mathbf{M}}}^{2}\right] \\
		&\quad+ \frac{3 \bar{c}^{2}}{\rho_{\mathbf{M}}} \alpha_{t}^{2} \mathbb{E}\left[\left\|\mathbf{e}_{t}\right\|^{2}\right] + \frac{2 \bar{c}^{2}}{\rho_{\mathbf{W}}}(1+\gamma)^{2} \alpha_{t}^{2} \mathbb{E}\left[\left\|\bar{{\theta}}_{t}-{\theta}^{*}\right\|_2^{2}\right],
	\end{align*}
	where $\sigma^2$ is defined in Lemma \ref{lem2},  $\bar{c}$, $\rho_{\mathbf{W}}$ and $\rho_{\mathbf{M}}$ are defined in Lemma \ref{lem99}.
\end{lem}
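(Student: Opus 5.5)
The plan is to mirror the proof of Lemma \ref{lem3}, now for the $\Theta$-recursion. Since $\mathbf{u}^\top\mathbf{W}=\mathbf{u}^\top$ and $\mathbf{W}\mathbf{1}=\mathbf{1}$, we have $\frac{\mathbf{1}\mathbf{u}^\top}{n}\mathbf{W}=\frac{\mathbf{1}\mathbf{u}^\top}{n}$ and $\left(\mathbf{W}-\frac{\mathbf{1}\mathbf{u}^\top}{n}\right)\frac{\mathbf{1}\mathbf{u}^\top}{n}=0$. Using the i.i.d. update $\Theta_{t+1}=\mathbf{W}(\Theta_t+\alpha_t\mathbf{Y}_t)$, this gives the error identity
\begin{equation*}
\Theta_{t+1}-\bar{\Theta}_{t+1}=\left(\mathbf{W}-\frac{\mathbf{1}\mathbf{u}^\top}{n}\right)\left[(\Theta_t-\bar{\Theta}_t)+\alpha_t\mathbf{Y}_t\right].
\end{equation*}

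\textbf{Contraction step.} I work in the $\widehat{\mathbf{W}}$-norm, where Lemma \ref{lem99}(iii) gives contraction factor $1-\rho_{\mathbf{W}}$. Applying Young's inequality with $\epsilon=\rho_{\mathbf{W}}/(1-\rho_{\mathbf{W}})$, exactly as in (\ref{e5}), yields
\begin{equation*}
\|\Theta_{t+1}-\bar{\Theta}_{t+1}\|^2_{\widehat{\mathbf{W}}}\le(1-\rho_{\mathbf{W}})\|\Theta_t-\bar{\Theta}_t\|^2_{\widehat{\mathbf{W}}}+\frac{\alpha_t^2}{\rho_{\mathbf{W}}}\|\mathbf{Y}_t\|^2_{\widehat{\mathbf{W}}}.
\end{equation*}
The statement writes the consensus norm with subscript $\widehat{\mathbf{M}}$, but Lemma \ref{lem2} uses $\widehat{\mathbf{W}}$ for this quantity. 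Any switch between the two norms costs factors of $\bar{c}$ via Lemma \ref{lem99}(ii), which is where the $\bar{c}^4$ factors come from.

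\textbf{Splitting $\mathbf{Y}_t$.} I decompose
\begin{equation*}
\mathbf{Y}_t=(\mathbf{Y}_t-\bar{\mathbf{Y}}_t)+(\bar{\mathbf{Y}}_t-\bar{\mathbf{G}}_t)+\bar{\mathbf{G}}_t
\end{equation*}
and apply $\|a+b+c\|^2\le 3(\|a\|^2+\|b\|^2+\|c\|^2)$. Each piece is then handled separately.
\begin{itemize}
\item The first piece is converted to the $\widehat{\mathbf{M}}$-norm with a $\bar{c}^2$ factor, possibly $\bar{c}^4$ after the second norm change. This gives the $\frac{3\bar{c}^4}{\rho_{\mathbf{W}}}\alpha_t^2\,\mathbb{E}\|\mathbf{Y}_t-\bar{\mathbf{Y}}_t\|^2_{\widehat{\mathbf{M}}}$ term.
\item The second piece satisfies $\|\bar{\mathbf{Y}}_t-\bar{\mathbf{G}}_t\|^2\le\|\mathbf{e}_t\|^2$, as shown in the proof of Lemma \ref{lem2}, using $\bar{\mathbf{Y}}_t=\frac{\mathbf{v}\mathbf{1}^\top}{n}\mathbf{Q}_t$ (from column stochasticity and $\mathbf{Y}_0$ initialized consistently with $\mathbf{Q}_0$). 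This gives the $\alpha_t^2\,\mathbb{E}\|\mathbf{e}_t\|^2$ term.
\item The third piece uses $\bar{\mathbf{G}}^*=0$ together with $(1+\gamma)$-Lipschitzness, giving $\|\bar{\mathbf{G}}_t\|^2\le 2(1+\gamma)^2\left(\|\Theta_t-\bar{\Theta}_t\|^2+n\|\bar{\theta}_t-\theta^*\|_2^2\right)$.
\end{itemize}

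\textbf{Collecting terms.} The $\|\Theta_t-\bar{\Theta}_t\|^2$ part of the third piece is converted back to the consensus norm with $\bar{c}^2$ factors and merged into the leading coefficient, producing $1-\rho_{\mathbf{W}}+\frac{2\bar{c}^4}{\rho_{\mathbf{W}}}(1+\gamma)^2\alpha_t^2$. The remaining part gives the optimality-gap term. Taking expectations completes the bound; no unbiasedness argument is needed, since the update is deterministic given $\Theta_t$ and $\mathbf{Y}_t$.

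\textbf{Main obstacle.} The difficulty is bookkeeping rather than conceptual. The constants must be tracked so that they match the stated ones. The stated factor $2$ suggests a tighter split than $3$: for instance, absorbing $\bar{\mathbf{G}}_t$ with a weighted Young inequality, or noting that $\|\mathbf{v}\mathbf{1}^\top/n\|_2\le1$ keeps its coefficient small. The $n$ factor on $\|\bar{\theta}_t-\theta^*\|_2^2$ may also have been absorbed in the statement. If exact constants do not match, I would settle for the same structure with constants of the same order.
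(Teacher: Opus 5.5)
Your plan follows the paper's proof step for step. It uses the same error identity; your sign $+\alpha_t\mathbf{Y}_t$ is the correct one, while the paper writes $-\alpha_t\mathbf{Y}_t$. It then uses the same Young step, giving $(1-\rho_{\mathbf{W}})\|\Theta_t-\bar{\Theta}_t\|^2_{\widehat{\mathbf{W}}}+\frac{\alpha_t^2}{\rho_{\mathbf{W}}}\|\mathbf{Y}_t\|^2_{\widehat{\mathbf{W}}}$, the same three-way split of $\mathbf{Y}_t$ with factor $3$, and the same bounds on the three pieces. You are also right that the $\widehat{\mathbf{M}}$ on the consensus term must be read as $\widehat{\mathbf{W}}$. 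The contraction of $\mathbf{W}-\frac{\mathbf{1}\mathbf{u}^\top}{n}$ is only available in the $\widehat{\mathbf{W}}$-norm, and converting both sides to $\widehat{\mathbf{M}}$ would produce a leading factor of order $\bar{c}^4(1-\rho_{\mathbf{W}})$.

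One ingredient you borrow from the proof of Lemma~\ref{lem2} is false, and your remark that $\|\mathbf{v}\mathbf{1}^\top/n\|_2\le 1$ repeats it. For this rank-one matrix, $\bigl\|\frac{\mathbf{v}\mathbf{1}^\top}{n}\bigr\|_2=\frac{\|\mathbf{v}\|_2}{\sqrt{n}}$. Since $\mathbf{v}\ge 0$ and $\mathbf{1}^\top\mathbf{v}=n$, Cauchy--Schwarz gives $\|\mathbf{v}\|_2\ge\sqrt{n}$. So this norm is at least $1$, with equality only when $\mathbf{v}=\mathbf{1}$, i.e.\ when $\mathbf{M}$ is doubly stochastic. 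What actually holds, with $\kappa:=\|\mathbf{v}\|_2^2/n\in[1,n]$, is
$\|\bar{\mathbf{Y}}_t-\bar{\mathbf{G}}_t\|^2\le\kappa\|\mathbf{e}_t\|^2$ and $\|\bar{\mathbf{G}}_t\|^2\le 2\kappa(1+\gamma)^2\bigl(\|\Theta_t-\bar{\Theta}_t\|^2+n\|\bar{\theta}_t-\theta^*\|_2^2\bigr)$.

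This leaves the structure of the recursion intact, but your route then yields these coefficients:
$\frac{3\kappa\bar{c}^2}{\rho_{\mathbf{W}}}\alpha_t^2$ on $\mathbb{E}\|\mathbf{e}_t\|^2$;
$\frac{6\kappa\bar{c}^4(1+\gamma)^2}{\rho_{\mathbf{W}}}\alpha_t^2$ on the consensus term;
$\frac{6\kappa\bar{c}^2(1+\gamma)^2 n}{\rho_{\mathbf{W}}}\alpha_t^2$ on the optimality gap.
The stated constants are not reachable this way. Weight $3$ on the first two pieces of a weighted Young split forces weight $3$ on the third, so you get $3\cdot 2=6$, not $2$. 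The paper's own proof does not reach them either: it silently takes $\kappa=1$, turns $3\cdot 2$ into $2$, and writes $\rho_{\mathbf{M}}$ instead of $\rho_{\mathbf{W}}$ in the $\mathbf{e}_t$ term; the statement also drops the factor $n$. So your fallback, the same structure with constants of the same order (now including the factor $\kappa\le n$), is the correct outcome.
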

\begin{proof}
	By the iterative formula of $\Theta_t$ and the definition of $\bar{\Theta}_{t}$ in (\ref{ea}),
	\begin{align*}
	{\Theta}_{t+1}-\bar{{\Theta}}_{t+1}& = \left(\mathbf{W}-\frac{\mathbf{1} \mathbf{u}^{\top}}{n} \right)\left({\Theta}_{t}-\alpha_{t} \mathbf{Y}_{t}\right)\\
	& = \left(\mathbf{W}-\frac{\mathbf{1} \mathbf{u}^{\top}}{n}\right)\left(\left({\Theta}_{t}-\bar{{\Theta}}_{t}\right)-\alpha_{t} \mathbf{Y}_{t}\right).
	\end{align*}
	By a similar analysis of (\ref{e5}),
	$$\left\|{\Theta}_{t}-\bar{{\Theta}}_{t+1}\right\|_{\bar{\mathbf{W}}}^{2} \leq \left(1-\rho_{\mathbf{W}}\right)\left\|{\Theta}_{t}-\bar{{\Theta}}_{t}\right\|_{\widehat{\mathbf{W}}}^{2} + \frac{1}{\rho_{\mathbf{W}}}\left\|{\alpha}_{t} \mathbf{Y}_{t}\right\|_{\widehat{\mathbf{W}}}^{2}.$$
	Taking the expectation of both sides of the above inequality,
	\begin{align*}
		&\mathbb{E}\left[\left\|{\Theta}_{t+1}-\bar{{\Theta}}_{t+1}\right\|_{\bar{\mathbf{W}}}^{2}\right]\\
		 &\leq (1-\rho_{\mathbf{W}}) \mathbb{E}\left[\left\|{\Theta}_{t}-\bar{{\Theta}}_{t}\right\|_{\widehat{\mathbf{W}}}^{2}\right] + \frac{\bar{{c}}^{2}}{\rho_{\mathbf{W}}} \alpha_{t}^{2} \mathbb{E}\left[\left\|\mathbf{Y}_{t}\right\|^{2}\right] \\
		&= (1-\rho_{\mathbf{W}}) \mathbb{E}\left[\left\|{\Theta}_{t}-\bar{{\Theta}}_{t}\right\|_{\widehat{\mathbf{W}}}^{2}\right]\\
		&\quad + \frac{\bar{{c}}^{2}}{\rho_{\mathbf{W}}} \alpha_{t}^{2} \mathbb{E}\left[\left\|\mathbf{Y}_{t}-\bar{\mathbf{Y}}_{t} + \bar{\mathbf{Y}}_{t}-\bar{\mathbf{G}}_{t}+\bar{\mathbf{G}}_{t}\right\|^{2}\right] \\
		&\leq (1-\rho_{\mathbf{W}}) \mathbb{E}\left[\left\|{\Theta}_{t}-\bar{{\Theta}}_{t}\right\|_{\widehat{\mathbf{W}}}^{2}\right] + \frac{3 \bar{{c}}^{4}}{\rho_{\mathbf{W}}} \alpha_{t}^{2} \mathbb{E}\left[\left\|\mathbf{Y}_{t}-\bar{\mathbf{Y}}_{t}\right\|^{2}_{\widehat{\mathbf{M}}}\right] \\
		&\quad + \frac{3 \bar{{c}}^{2}}{\rho_{\mathbf{M}}} \alpha_{t}^{2} \mathbb{E}\left[\left\|\mathbf{e}_{t}\right\|^{2}\right] + \frac{2 \bar{{c}}^{4}}{\rho_{\mathbf{W}}}(1+\gamma)^{2} \alpha_{t}^{2} \mathbb{E}\left[\left\|{\Theta}_{t}-\bar{{\Theta}}_{t}\right\|_{\widehat{\mathbf{W}}}^{2}\right] \\
		&\quad + \frac{2 \bar{{c}}^{2}}{\rho_{\mathbf{W}}}(1+\gamma)^{2} n \alpha_{t}^{2} \mathbb{E}\left[\left\|\bar{{\theta}}_{t}-{\theta}^{*}\right\|_2^{2}\right] \\
		&= \left(1-\rho_{\mathbf{W}} + \frac{2 \bar{{c}}^{4}}{\rho_{\mathbf{W}}}(1+\gamma)^{2} \alpha_{t}^{2}\right) \mathbb{E}\left[\left\|{\Theta}_{t}-\bar{{\Theta}}_{t}\right\|_{\widehat{\mathbf{W}}}^{2}\right] \\
		&\quad+ \frac{3 \bar{{c}}^{4}}{\rho_{\mathbf{W}}} \alpha_{t}^{2} \mathbb{E}\left[\left\|\mathbf{Y}_{t}-\bar{\mathbf{Y}}_{t}\right\|_{\widehat{\mathbf{M}}}^{2}\right] \\
		&\quad + \frac{3 \bar{{c}}^{2}}{\rho_{\mathbf{M}}} \alpha_{t}^{2} \mathbb{E}\left[\left\|\mathbf{e}_{t}\right\|^{2}\right] + \frac{2 \bar{{c}}^{2}}{\rho_{\mathbf{W}}}(1+\gamma)^{2} n \alpha_{t}^{2} \mathbb{E}\left[\left\|\bar{{\theta}}_{t}-{\theta}^{*}\right\|_2^{2}\right],
	\end{align*}
	where the second inequality is obtained by Young's inequality and Lemma \ref{lem99} ($ii$). The proof is complete.
\end{proof}

\begin{lem}\label{lem5}
	Suppose that Assumptions \ref{ass2}, \ref{ass4} hold and $\alpha_{t} \le \frac{(1-\gamma)n}{4\mathbf{u}^\top\mathbf{v}}$. Then for any $t \ge 0$,
	\begin{align}
		&\mathbb{E}\left[\left\|\bar{{\theta}}_{t+1}-{\theta}^{*}\right\|_2^{2}\right] \notag\\
		&\leq \left(1-\frac{(1-\gamma)w\mathbf{u}^\top\mathbf{v}}{2n}  \alpha_{t}\right) \mathbb{E}\left[\left\|\bar{{\theta}}_{t}-{\theta}^{*}\right\|_2^{2}\right]\notag\\
		&\quad + \frac{6n}{(1-\gamma) w \mathbf{u}^\top\mathbf{v}} \alpha_{t} \mathbb{E}\left[\left(\frac{\mathbf{u}^\top\mathbf{v}}{n}\right)^{2} \frac{\bar{{c}}^{2}}{n} \left\|{\Theta}_{t}-\bar{{\Theta}}_{t}\right\|_{\widehat{\mathbf{W}}}^{2} \right. \notag\\
		&\left.\quad + \left(\frac{\mathbf{u}^\top\mathbf{v}}{n}\right)^{2} \frac{1}{n} \left\|\mathbf{e}_{t}\right\|^{2} + \bar{{c}}^{2} \left\|\mathbf{Y}_{t}-\bar{\mathbf{Y}}_{t}\right\|_{\widehat{\mathbf{M}}}^{2}\right],\label{e7}
	\end{align}
	where $\sigma^2$ is defined in Lemma \ref{lem2},  $\bar{c}$, $\rho_{\mathbf{W}}$ and $\rho_{\mathbf{M}}$ are defined in Lemma \ref{lem99}, $\omega$ is the minimum eigenvalue
	of matrix $\Phi^{\top}\text{diag}(\mathbf{d}_\pi)\Phi$.
\end{lem}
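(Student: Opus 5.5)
Since $\bar\theta_{t+1}=\frac{1}{n}\mathbf{u}^\top\Theta_{t+1}$, the plan is to derive a scalar recursion for $\bar\theta_t$ and treat it as a perturbed centralized TD step.

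\textbf{Step 1: the averaged recursion.} Because $\mathbf{u}^\top\mathbf{W}=\mathbf{u}^\top$, left-multiplying the $\Theta$-update in (\ref{alg-1}) by $\frac{1}{n}\mathbf{u}^\top$ gives
\begin{equation*}
\bar\theta_{t+1}=\bar\theta_t+\frac{\alpha_t}{n}\mathbf{u}^\top\mathbf{Y}_t .
\end{equation*}
In the i.i.d. case there is no projection, so this is exact. Next I split $\mathbf{Y}_t$ as
\begin{equation*}
\mathbf{Y}_t=(\mathbf{Y}_t-\bar{\mathbf{Y}}_t)+\frac{\mathbf{v}\mathbf{1}^\top}{n}\mathbf{Q}_t .
\end{equation*}
Here I use the invariant $\mathbf{1}^\top\mathbf{Y}_t=\mathbf{1}^\top\mathbf{Q}_t$, which holds by column stochasticity of $\mathbf{M}$ and the initialization $\mathbf{Y}_0=\mathbf{Q}_0$ (implicit in the paper). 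Writing $\mathbf{Q}_t=\mathbf{G}_t^{\mathbb{E}}-\mathbf{e}_t$ gives
\begin{equation*}
\frac{1}{n}\mathbf{u}^\top\mathbf{Y}_t=\frac{\mathbf{u}^\top\mathbf{v}}{n}\bar g(\bar\theta_t)+\frac{\mathbf{u}^\top\mathbf{v}}{n}\cdot\frac{1}{n}\sum_i\big(g^i(\theta_t^i)-g^i(\bar\theta_t)\big)-\frac{\mathbf{u}^\top\mathbf{v}}{n^2}\mathbf{1}^\top\mathbf{e}_t+\frac{1}{n}\mathbf{u}^\top(\mathbf{Y}_t-\bar{\mathbf{Y}}_t).
\end{equation*}
Set $\eta:=\frac{\mathbf{u}^\top\mathbf{v}}{n}\alpha_t$. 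Then $\bar\theta_{t+1}=\bar\theta_t+\eta\,\bar g(\bar\theta_t)+\alpha_t\Delta_t$, where $\Delta_t$ collects the three error terms.

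\textbf{Step 2: the contraction of the TD step.} I use the standard single-agent TD facts (Bhandari et al.; Tsitsiklis–Van Roy):
\begin{align*}
&\bar g(\theta)=A(\theta-\theta^*) \text{ with } \bar g(\theta^*)=0,\\
&\langle\theta-\theta^*,\bar g(\theta)\rangle\le-(1-\gamma)w\|\theta-\theta^*\|_2^2,\\
&\|\bar g(\theta)\|_2\le(1+\gamma)\|\theta-\theta^*\|_2 .
\end{align*}
Expanding the square,
\begin{equation*}
\|\bar\theta_t+\eta\bar g(\bar\theta_t)-\theta^*\|_2^2\le\big(1-2\eta(1-\gamma)w+\eta^2(1+\gamma)^2\big)\|\bar\theta_t-\theta^*\|_2^2 .
\end{equation*}
The step-size condition $\alpha_t\le\frac{(1-\gamma)n}{4\mathbf{u}^\top\mathbf{v}}$ makes $\eta\le(1-\gamma)/4$. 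Since $w\le1$ under Assumption \ref{ass2}, this gives $\eta^2(1+\gamma)^2\le\eta(1-\gamma)w$ in the regime intended, or at least small enough to absorb into the contraction, so the factor is at most $1-\eta(1-\gamma)w$.

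\textbf{Step 3: the cross term and the error terms.} For the cross term I apply Young's inequality:
\begin{equation*}
2\alpha_t\langle\cdot,\Delta_t\rangle\le\frac{\eta(1-\gamma)w}{2}\|\bar\theta_t+\eta\bar g-\theta^*\|_2^2+\frac{2\alpha_t^2}{\eta(1-\gamma)w}\|\Delta_t\|_2^2 .
\end{equation*}
The last term becomes $\frac{2n\alpha_t}{(1-\gamma)w\mathbf{u}^\top\mathbf{v}}\|\Delta_t\|^2$. A separate $\alpha_t^2\|\Delta_t\|^2$ term also appears; the step-size bound lets me absorb it into the same coefficient. This yields the contraction factor $1-\frac{(1-\gamma)w\mathbf{u}^\top\mathbf{v}}{2n}\alpha_t$.

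I then bound $\|\Delta_t\|^2\le3(\cdots)$, handling each piece as follows.
\begin{itemize}
\item The consensus term, by Lipschitz continuity, Cauchy–Schwarz and Lemma \ref{lem99}(ii), is at most $(\frac{\mathbf{u}^\top\mathbf{v}}{n})^2\frac{(1+\gamma)^2}{n}\|\Theta_t-\bar\Theta_t\|^2$, which is at most $(\frac{\mathbf{u}^\top\mathbf{v}}{n})^2\frac{\bar c^2}{n}\|\Theta_t-\bar\Theta_t\|^2_{\widehat{\mathbf{W}}}$ up to a $(1+\gamma)^2$ factor, which I absorb as the paper does.
\item The error term is at most $(\frac{\mathbf{u}^\top\mathbf{v}}{n})^2\frac{1}{n}\|\mathbf{e}_t\|^2$.
\item The tracking term is at most $\frac{\|\mathbf{u}\|^2}{n^2}\|\mathbf{Y}_t-\bar{\mathbf{Y}}_t\|^2$. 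Since $\|\mathbf{u}\|_2\le\mathbf{u}^\top\mathbf{1}=n$, this is at most $\bar c^2\|\mathbf{Y}_t-\bar{\mathbf{Y}}_t\|^2_{\widehat{\mathbf{M}}}$.
\end{itemize}
Taking expectations gives (\ref{e7}).

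\textbf{Main obstacle.} The main difficulty is the constant bookkeeping in Steps 2–3. The $\eta^2$ terms and the $\alpha_t^2\|\Delta_t\|^2$ term must both be absorbed using only $\alpha_t\le\frac{(1-\gamma)n}{4\mathbf{u}^\top\mathbf{v}}$, so that the final coefficients come out as $\frac{6n}{(1-\gamma)w\mathbf{u}^\top\mathbf{v}}$ and $\frac{1}{2}$ in the contraction.
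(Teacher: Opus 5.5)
Your overall structure matches the paper's: averaging with $\mathbf{u}^\top$, splitting $\frac{1}{n}\mathbf{u}^\top\mathbf{Y}_t$ into $\frac{\mathbf{u}^\top\mathbf{v}}{n}\bar g(\bar\theta_t)$ plus consensus, estimation-error and tracking pieces, a Young split, and the three-term bound. However, Step~2 has a genuine gap.

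With the two facts you use, expanding the square gives the factor $1-2\eta(1-\gamma)w+\eta^2(1+\gamma)^2$. Getting this down to $1-\eta(1-\gamma)w$ requires $\eta\le(1-\gamma)w/(1+\gamma)^2$. The hypothesis gives only $\eta\le(1-\gamma)/4$, which implies the needed bound only when $w\ge(1+\gamma)^2/4$. Your appeal to $w\le 1$ points the wrong way. The quantity $w$, the smallest eigenvalue of $\Phi^\top\text{diag}(\mathbf{d}_\pi)\Phi$, is typically small, and small $w$ is exactly what breaks the inequality. As $w\to 0$ your factor even exceeds $1$. So under the stated step-size condition the contraction does not follow. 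The $\eta^2$ term cannot be ``absorbed'' without adding a $w$-dependent restriction on $\alpha_t$. The root cause is the Euclidean bound $\|\bar g(\theta)\|_2\le(1+\gamma)\|\theta-\theta^*\|_2$: it measures the quadratic term in a different norm from the one in which the drift term is strong.

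The missing idea is to control the cross term and the quadratic term in the \emph{same} $\mathbf{d}_\pi$-weighted value norm. Let $D:=\text{diag}(\mathbf{d}_\pi)$. Lemmas 3 and 4 of \cite{bhandari2018finite} give $\langle\theta-\theta^*,\bar g(\theta)\rangle\le-(1-\gamma)\|\Phi(\theta-\theta^*)\|_D^2$ and $\|\bar g(\theta)\|_2\le 2\|\Phi(\theta-\theta^*)\|_D$; the second uses $\|\phi(s)\|\le 1$ and stationarity of $\mathbf{d}_\pi$. Hence $\|\theta-\theta^*+\eta\bar g(\theta)\|_2^2\le\|\theta-\theta^*\|_2^2-(2\eta(1-\gamma)-4\eta^2)\|\Phi(\theta-\theta^*)\|_D^2$. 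Now $\eta\le(1-\gamma)/4$ makes the bracket at least $\eta(1-\gamma)$ with no reference to $w$. Only afterwards do you use $\|\Phi(\theta-\theta^*)\|_D^2\ge w\|\theta-\theta^*\|_2^2$. This is the paper's route.

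Two smaller bookkeeping points:
\begin{itemize}
\item With Young weight $a/2$, where $a:=\eta(1-\gamma)w$, your coefficient on $\|\Delta_t\|_2^2$ is $(1+\frac{2}{a})\alpha_t^2$, which overshoots the constant $6$. The paper instead takes $\varepsilon=\frac{a}{2(1-a)}$ in $(1+\varepsilon)\|x\|^2+(1+\frac{1}{\varepsilon})\|y\|^2$. This gives $(1+\varepsilon)(1-a)=1-\frac{a}{2}$ and $1+\frac{1}{\varepsilon}=\frac{2-a}{a}\le\frac{2}{a}$, and hence exactly the stated constants.
\item The $(1+\gamma)^2$ on the consensus term is not absorbed by anything in the statement for a fixed $\bar c$. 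The paper's own derivation carries $(1+\gamma)^2\bar c^2/n$ and drops it only in the last display. Either keep the factor, or enlarge $\bar c$ to $(1+\gamma)\bar c$ throughout.
\end{itemize}
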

\begin{proof}
	By the definition of $\bar{\theta}_t$ and the recursion of $\theta_t^i$,
	$$\bar{\theta}_{t+1} = \bar{\theta}_t +\alpha_t \sum_{j=1}^{n} \frac{u_j}{n} y_t^j.$$
	
	Taking the squared norm of the above equation, 
	{\small\begin{align}
		&\left\|\bar{{\theta}}_{t+1}-{\theta}^{*}\right\|_2^{2} \notag\\
		&= \left\|\bar{{\theta}}_{t}-{\theta}^{*}+\alpha_{t} \frac{\mathbf{u} ^\top\mathbf{v}}{n} \bar{{g}}(\bar{{\theta}}_{t}) -\alpha_{t} \left(\frac{\mathbf{u}^\top \mathbf{v}}{n} \bar{{g}}(\bar{{\theta}}_{t}) - \sum_{j=1}^{n} \frac{u_{j}}{n} y_{t}^{j}\right)\right\|_2^{2} \notag\\
		&\leq (1+\varepsilon)\left\|\bar{{\theta}}_{t}-{\theta}^{*}+\alpha_t\frac{\mathbf{u}^\top\mathbf{v}}{n}\bar{g}(\bar{\theta}_t)\right\|_2^{2}\notag\\
		&\quad + \left(1+\frac{1}{\varepsilon}\right) \alpha_{t}^{2} \left\|\frac{\mathbf{u}^\top \mathbf{v}}{n} \bar{{g}}(\bar{{\theta}}_{t}) - \sum_{j=1}^{n} \frac{u_{j}}{n} y_{t}^{j}\right\|_2^{2} \notag\\
		&\le (1+\varepsilon)\left(\left\|\bar{{\theta}}_{t}-{\theta}^{*}\right\|_2^{2} \right.\notag\\
		&\quad\left.- \left(2 (1-\gamma)\frac{\mathbf{u}^\top\mathbf{v}}{n}\alpha_{t} - 4  \left(\frac{\mathbf{u}^\top\mathbf{v}}{n}\right)^2\alpha_{t}^2\right) w\left\|\bar{{\theta}}_{t}-{\theta}^{*}\right\|_2^{2}\right)\notag\\
		&\quad+ \left(1+\frac{1}{\varepsilon}\right) \alpha_{t}^{2} \left\|\sum_{j=1}^{n} \frac{u_{j}}{n} (v_j{g}(\bar{{\theta}}_{t}) - y_{t}^{j})\right\|_2^{2} \notag\\
		&\le \left(1-\frac{(1-\gamma)w\mathbf{u}^\top\mathbf{v}}{2n}  \alpha_{t}\right) \left\|\bar{{\theta}}_{t}-{\theta}^{*}\right\|_2^{2} \notag\\
		&\quad+ \frac{2n}{(1-\gamma) w\mathbf{u}^\top\mathbf{v}} \alpha_{t} \left\|\sum_{j=1}^{n} \frac{u_{j}}{n} (v_j \bar{g}(\bar{{\theta}}_{t})-y_{t}^{j})\right\|_2^{2}, 
	\end{align}}
	where the first inequality follows from the fact that $(a+b)^2 \leq \left(1+\frac{1}{\epsilon}\right) b^2 + (1+\epsilon) a^2$, $\forall a, b, \epsilon > 0$, the second inequality follows from  \cite[Lemmas 1, 3, 4]{bhandari2018finite},
	and the third inequality follows from the fact that $\alpha_{t} \le \frac{(1-\gamma)n}{4\mathbf{u}^\top\mathbf{v}}$ and the setting $\varepsilon = \frac{(1-\gamma)w\mathbf{u}^\top\mathbf{v}\alpha_t}{2n-2(1-\gamma)w\mathbf{u}^\top\mathbf{v}\alpha_t}$.

	For the last term on the right-hand side of the above inequality,
	\begin{align}
		&\quad\frac{2n}{(1-\gamma)w\mathbf{u}^\top\mathbf{v} }\alpha_t\left\|\sum_{j=1}^{n} \frac{u_{j}}{n}(v_j {g}(\bar{{\theta}}_{t})-y_{t}^{j})\right\|_2^{2} \notag \\
		&= \frac{2n}{(1-\gamma)w\mathbf{u}^\top\mathbf{v} }\alpha_t\left\|\sum_{j=1}^{n} \frac{u_{j}}{n} (v_j \bar{g}(\bar{{\theta}}_{t}) - \frac{v_{j}}{n} \sum_{j=1}^n{g}_{j}({\theta}^j_{t})\right.\notag\\
		&\quad\left. + \frac{v_{j}}{n} \sum_{j=1}^n({g}^{j}({\theta}^j_{t})-y_{t}^{j})) 
		+ \left(v_j\bar{q}_t-{y}_{t}^j\right)\right\|_2^{2} \notag\\
		&\leq \frac{6n}{(1-\gamma)w \mathbf{u}^\top\mathbf{v}} \alpha_t \left( \left(\frac{\mathbf{u}^\top\mathbf{v}}{n}\right)^2 \left\| \bar{g}(\bar{\theta}_t)  - \frac{1}{n} \sum_{j=1}^n {g}^j(\theta_t^j) \right\|_2^2\right.\notag\\
		&\quad\left.+ \left(\frac{\mathbf{u}^\top\mathbf{v}}{n}\right)^2 \frac{1}{n}\sum_{j=1}^n\Vert g^j(\theta_t) - q_{t}^j \Vert_2^2+\sum_{j=1}^n\frac{u_j}{n} \Vert y_{t}^j - v_j \bar{y}_t \Vert_2^2\right)\notag\\
		&\leq \frac{6n}{(1-\gamma)w\mathbf{u}^\top\mathbf{v}}\alpha_t \left( \left(\frac{\mathbf{u}^\top\mathbf{v}}{n}\right)^2\frac{(1+\gamma)^2\bar{c}^2}{n}\|\Theta_t-\bar{\Theta}_t\|_{\widehat{\mathbf{W}}}\right.\notag\\
		&\quad\left.+\left(\frac{\mathbf{u}^\top\mathbf{v}}{n}\right)^2\frac{1}{n}\|\mathbf{e}_t\|^2+\bar{c}^2\|\mathbf{Y}_t-\bar{\mathbf{Y}}_t\|_{\widehat{\mathbf{M}}}\right), \notag
	\end{align}
	where the last inequality is obtained by the $(1+\gamma)$-Lipschitz
	continuity of $g(\cdot;\xi^i_{t+1})$ and Lemma \ref{lem99} ($ii$). Then
	\begin{align*}
		\Vert \bar{\theta}_{t+1} - \theta^* \Vert_2^2 &\leq \left( 1 - \frac{(1-\gamma)w\mathbf{u}^\top\mathbf{v}}{2n}\alpha_t \right) \Vert \bar{\theta}_t - \theta^* \Vert_2^2\\
		&\quad + \frac{6n}{(1- \gamma)w\mathbf{u}^\top\mathbf{v}}\alpha_t \left[ \left(\frac{\mathbf{u}^\top\mathbf{v}}{n}\right)^2 \frac{\bar{c}^2}{n} \Vert \Theta_t - \bar{\Theta}_t \Vert_{\widehat{\mathbf{W}}}^2 \right.\\&\left.\quad+ \left(\frac{\mathbf{u}^\top\mathbf{v}}{n}\right)^2 \frac{1}{n} \Vert \mathbf{e}_t \Vert^2 + \bar{c}^2 \Vert \mathbf{Y}_t - \bar{\mathbf{Y}}_t \Vert_{\widehat{\mathbf{M}}}^2 \right].
	\end{align*}
	Taking the expectation of both sides of the above inequality, we arrive at (\ref{e7}).
	The proof is complete.
\end{proof}

We are ready to establish the convergence rate of PP-DTD under the i.i.d. setting.
\begin{thm}\label{t1}
	Let
	{\small\begin{align*}
		\mathbf{V}_{t} &:= \frac{\rho_{\mathbf{W}}}{8C_{13}} \mathbb{E} [\Vert \mathbf{e}_{t} \Vert^2] + \frac{\rho_{\mathbf{W}}}{8C_{23}} \mathbb{E} [\Vert \mathbf{Y}_{t} - \bar{\mathbf{Y}}_{t} \Vert^2_{\widehat{\mathbf{M}}}]+ \mathbb{E} [\Vert \Theta_{t} - \bar{\Theta}_{t} \Vert_{\widehat{\mathbf{W}}}^2]\\
		&\quad  + \mathbb{E} [\Vert \bar{\theta}_{t} - \theta^* \Vert_2^2],
	\end{align*}
	}
	$\beta_t=\hat{c}\alpha_t$, $\hat{c} \ge  \frac{16 C_{41} C_{13}}{\rho_{\mathbf{W}}}$, $c'=\min \left\{ \frac{C_{44}}{2}, \hat{c}\right\}$, $c''=\left(\frac{3 \rho_{W}}{4 C_{13}}+\frac{9 \bar{c}^{2}}{8 C_{23}}\right)\sigma^2\hat{c}^2$ and 
	\begin{footnotesize}
		\begin{align*}
		\alpha_t &\le c_{min}\\
		&:=\min\left\{ \begin{aligned}
			& \frac{\hat{c} C_{23} \rho_{\mathbf{M}}}{2 C_{11} C_{23} \rho_{\mathbf{M}} + 2 C_{21} C_{13} \rho_{\mathbf{M}} + 16 C_{31} C_{13} C_{23}}, \\
			&\frac{4 C_{44} C_{13}}{\rho_{\mathbf{W}} (C_{14} + C_{24}) + 8 C_{23} C_{34}}, \frac{\rho_{\mathbf{W}}}{4(C_{33}+C_{43})}, \\
			& \frac{\rho_{\mathbf{M}}\rho_{\mathbf{W}} }{2\left(C_{22}\rho_{\mathbf{W}}+\frac{C_{12}}{C_{13}}C_{23}\rho_{\mathbf{W}}+8C_{32}C_{23}+8C_{42}C_{23}\right)},\\
			&\frac{\min \left\{ \frac{\rho_{\mathbf{M}}}{2}, \frac{\rho_{\mathbf{W}}}{2} \right\}}{\min \left\{ \frac{C_{44}}{2}, \hat{c} \right\}},\frac{\|\mathbf{W}-\mathbf{I}\|}{\sqrt{2}(1+\gamma)},\frac{(1-\gamma)n}{4\mathbf{u}^\top\mathbf{v}},\frac{1}{2\hat{c}}
		\end{aligned} \right\}
		\end{align*}
	\end{footnotesize}
	where elements of   matrix $\mathbf{C}:=[C_{ij}]$ are given by
\begin{equation*}
	\begin{bmatrix}
		C_{11} \\[1ex] C_{21} \\[1ex] C_{31} \\[1ex] C_{41}
	\end{bmatrix}
	=
	\begin{bmatrix}
		\bar{c}^2+16(1+\gamma)^2  \\[1ex]
		\frac{3\bar{c}^2}{\rho_{\mathbf{M}}} (\hat{c}^2 + 8) \\[1ex]
		\frac{3\bar{c}^2}{\rho_{\mathbf{M}}} \\[1ex]
		\frac{6}{(1 - \gamma )w \mathbf{u}^\top\mathbf{v}} \left(\frac{\mathbf{u}^\top\mathbf{v}}{n}\right)^2
	\end{bmatrix},\quad\quad\quad\quad
\end{equation*}

\begin{equation*}
	\begin{bmatrix}
		C_{12} \\[1ex] C_{22} \\[1ex] C_{32} \\[1ex] C_{42}
	\end{bmatrix}
	=
	\begin{bmatrix}
		 16(1+\gamma)^2\bar{c}^2\\[1ex]
		\frac{24c^4}{\rho_{\mathbf{M}}} (1+\gamma)^2 \\[1ex]
		\frac{3\bar{c}^4}{\rho_{\mathbf{M}}} \\[1ex]
		\frac{6n\bar{c}^2}{(1-\gamma)w\mathbf{u}^\top\mathbf{v}}
	\end{bmatrix},\quad\quad\quad\quad\quad\quad\quad
\end{equation*}

\begin{equation*}
	\begin{bmatrix}
		C_{13} \\[1ex] C_{23} \\[1ex] C_{33} \\[1ex] C_{43}
	\end{bmatrix}
	=
	\begin{bmatrix}
		4 (1+\gamma)^2 (1+8\Vert \mathbf{W}-\mathbf{I} \Vert^2) \bar{c}^2 \\[1ex]
		\frac{48\bar{c}^4}{\rho_{\mathbf{M}}} \Vert \mathbf{W}-\mathbf{I} \Vert^{2} (1+\gamma)^2 \\[1ex]
		\frac{2\bar{c}^4}{\rho_{\mathbf{W}}} (1+\gamma)^2 \\[1ex]
		\frac{6\bar{c}^2}{(1-\gamma)w\mathbf{u}^\top\mathbf{v}} \left(\frac{\mathbf{u}^\top\mathbf{v}}{n}\right)^2
	\end{bmatrix},
\end{equation*}

\begin{equation*}
	\begin{bmatrix}
		C_{14} \\[1ex] C_{24} \\[1ex] C_{34} \\[1ex] C_{44}
	\end{bmatrix}
	=
	\begin{bmatrix}
		4 (1+\gamma)^2 n (3\hat{c}+8(1+\gamma)^2n) \\[1ex]
		\frac{48\bar{c}^2}{\rho_{\mathbf{M}}} (1+\gamma)^4 n \\[1ex]
		\frac{2\bar{c}^2}{\rho_{\mathbf{W}}} (1+\gamma)^2 \\[1ex]
		\frac{(1-\gamma )w\mathbf{u}^\top\mathbf{v}}{2n}
	\end{bmatrix},
\end{equation*}
	$\sigma^2$ is defined in Lemma \ref{lem2},  $\bar{c}$, $\rho_{\mathbf{W}}$ and $\rho_{\mathbf{M}}$ are defined in Lemma \ref{lem99}. Suppose that Assumptions \ref{ass2}, \ref{ass4} hold. Then 
	\begin{itemize}
		\item[(i)] If $\alpha_t = \frac{c_0}{(t+t_0)^{c_1}}$, $\beta_t = \frac{\hat{c} c_0}{(t+t_0)^{c_1}}$ with $k\in(0.5,1]$, $t_0\ge1$, $\frac{c_0}{t_0^{c_1}}\le c_{min}$ and $c_0\ge\frac{2c_1}{c'}$,
		
		\begin{equation*} 
			\mathbf{V}_T \leq\max \left\{ \mathbf{V}_{0} t_0^{c_1}, \frac{2 c'' c_0}{c'} \right\} \frac{1}{(T+t_0)^{c_1}}
		\end{equation*}
		for any $T\ge0$.
		
		\item[(ii)] If $\alpha_t \equiv \alpha$ with $\alpha<\frac{1}{c'}$, $$\mathbf{V}_T \le e^{-c'\alpha T}\mathbf{V}_0+{c''}\alpha$$ for any $T\ge0$.
	\end{itemize}
\end{thm}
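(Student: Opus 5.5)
The plan is to write Lemmas \ref{lem2}--\ref{lem5} as one linear system of recursive inequalities for the four error quantities. Set $E_t:=\mathbb{E}\|\mathbf{e}_t\|^2$, $Y_t:=\mathbb{E}\|\mathbf{Y}_t-\bar{\mathbf{Y}}_t\|^2_{\widehat{\mathbf{M}}}$, $X_t:=\mathbb{E}\|\Theta_t-\bar{\Theta}_t\|^2_{\widehat{\mathbf{W}}}$ and $D_t:=\mathbb{E}\|\bar\theta_t-\theta^*\|_2^2$. Substituting $\beta_t=\hat c\alpha_t$ and using $\alpha_t\le 1$-type bounds from $c_{\min}$ (so that $\alpha_t^2\le\alpha_t$ and $\beta_t^2\le \hat c^2\alpha_t^2$), each lemma takes the form of the $i$-th row of
\[
\begin{pmatrix}E_{t+1}\\ Y_{t+1}\\ X_{t+1}\\ D_{t+1}\end{pmatrix}\le
\begin{pmatrix}1-2\hat c\alpha_t+C_{11}\alpha_t^2 & C_{12}\alpha_t^2 & C_{13} & C_{14}\alpha_t^2\\
C_{21}\alpha_t^2 & 1-\rho_{\mathbf{M}}+C_{22}\alpha_t^2 & C_{23} & C_{24}\alpha_t^2\\
C_{31}\alpha_t^2 & C_{32}\alpha_t^2 & 1-\rho_{\mathbf{W}}+C_{33}\alpha_t^2 & C_{34}\alpha_t^2\\
C_{41}\alpha_t & C_{42}\alpha_t & C_{43}\alpha_t & 1-C_{44}\alpha_t\end{pmatrix}
\begin{pmatrix}E_t\\ Y_t\\ X_t\\ D_t\end{pmatrix}+\begin{pmatrix}6\sigma^2\hat c^2\alpha_t^2\\ \frac{9\bar c^2}{\rho_{\mathbf M}}\sigma^2\hat c^2\alpha_t^2\\0\\0\end{pmatrix}.
\]
I will read off the $C_{ij}$ exactly as listed in the statement, absorbing the $\bar c$ and $\hat c$ factors. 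One point needs care: the $X_t$ coefficients in the $E$ and $Y$ rows ($C_{13}$ and $C_{23}$) are \emph{not} small in $\alpha_t$. This is exactly why the Lyapunov weights in front of $E_t$ and $Y_t$ are the small numbers $\rho_{\mathbf W}/(8C_{13})$ and $\rho_{\mathbf W}/(8C_{23})$.

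Next I form $\mathbf{V}_{t+1}$ as the weighted sum of the rows and check the contraction coefficient of each component.

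For $X_t$, the contribution is $1-\rho_{\mathbf W}+\rho_{\mathbf W}/8+\rho_{\mathbf W}/8+(C_{33}+C_{43})\alpha_t$. The bound $\alpha_t\le\rho_{\mathbf W}/(4(C_{33}+C_{43}))$ makes this at most $1-\rho_{\mathbf W}/2$.

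For $E_t$, the coefficient relative to its weight is $1-2\hat c\alpha_t+C_{11}\alpha_t^2+\frac{C_{13}}{C_{23}}C_{21}\alpha_t^2+\frac{8C_{13}}{\rho_{\mathbf W}}(C_{31}\alpha_t^2+C_{41}\alpha_t)$. The condition $\hat c\ge 16C_{41}C_{13}/\rho_{\mathbf W}$ absorbs the $C_{41}\alpha_t$ term into $\hat c\alpha_t/2$. The first entry of $c_{\min}$ absorbs the $\alpha_t^2$ terms into another $\hat c\alpha_t/2$, which leaves $1-\hat c\alpha_t$.

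For $Y_t$, the same bookkeeping with the fourth entry of $c_{\min}$ gives at most $1-\rho_{\mathbf M}/2$.

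For $D_t$, the second entry of $c_{\min}$ yields $1-C_{44}\alpha_t/2$.

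Combining the four components, and using $\alpha_t\le\min\{\rho_{\mathbf M}/2,\rho_{\mathbf W}/2\}/c'$ to compare the constant contractions with $c'\alpha_t$, I obtain
\[
\mathbf V_{t+1}\le(1-c'\alpha_t)\mathbf V_t+c''\alpha_t^2 .
\]
The noise terms weighted by $\rho_{\mathbf W}/(8C_{13})$ and $\rho_{\mathbf W}/(8C_{23})$ produce $c''$ after bounding $\rho_{\mathbf M}$-factors crudely. I expect this bookkeeping step to be the main obstacle: verifying that every cross term lands under the listed entries of $c_{\min}$, and that the $O(1)$ coupling $C_{13},C_{23}$ from $X_t$ is compensated by the $\rho_{\mathbf W}/2$ contraction. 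If a constant does not match exactly, I would enlarge the relevant $c_{\min}$ entry rather than change the structure.

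Finally I solve the scalar recursion.

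(ii) With constant $\alpha<1/c'$, unrolling gives $\mathbf V_T\le(1-c'\alpha)^T\mathbf V_0+c''\alpha^2\sum_k(1-c'\alpha)^k\le e^{-c'\alpha T}\mathbf V_0+c''\alpha/c'$. The stated form $c''\alpha$ then follows after absorbing the factor $1/c'$ (or with $c''$ read accordingly).

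(i) For $\alpha_t=c_0/(t+t_0)^{c_1}$, I argue by induction with the hypothesis $\mathbf V_t\le B/(t+t_0)^{c_1}$, where $B=\max\{\mathbf V_0t_0^{c_1},2c''c_0/c'\}$. The induction step needs
\[
\Bigl(1-\frac{c'c_0}{(t+t_0)^{c_1}}\Bigr)\frac{B}{(t+t_0)^{c_1}}+\frac{c''c_0^2}{(t+t_0)^{2c_1}}\le\frac{B}{(t+t_0+1)^{c_1}}.
\]
I will prove this from $(t+t_0)^{c_1}/(t+t_0+1)^{c_1}\ge1-c_1/(t+t_0)$, together with $c'c_0\ge 2c_1$ and $B\ge 2c''c_0/c'$. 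This is the standard argument; for $c_1<1$ it uses $1/(t+t_0)\le1/(t+t_0)^{c_1}$.
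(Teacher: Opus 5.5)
Your plan follows the paper's proof. It uses the same four-row system from Lemmas \ref{lem2}--\ref{lem5} with $\beta_t=\hat c\alpha_t$, the same weights $\rho_{\mathbf W}/(8C_{13})$ and $\rho_{\mathbf W}/(8C_{23})$ to neutralize the $O(1)$ couplings to $\|\Theta_t-\bar\Theta_t\|^2_{\widehat{\mathbf W}}$, the same componentwise absorption into $\mathbf V_{t+1}\le(1-c'\alpha_t)\mathbf V_t+c''\alpha_t^2$, and the same induction for (i). In (i), your use of $(1+1/s)^{-c_1}\ge 1-c_1/s$ together with $s^{-1}\le s^{-c_1}$ is a cleaner version of the paper's Taylor step.

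The step that does not go through is the last line of (ii). Your unrolling correctly gives $e^{-c'\alpha T}\mathbf V_0+c''\alpha/c'$. The factor $1/c'$ cannot be ``absorbed'', because $c''$ is an explicitly defined constant and $c'$ is generally below $1$ (for instance $c'\le C_{44}/2=(1-\gamma)w/4$ when $\mathbf u=\mathbf v=\mathbf 1$). In fact no argument that uses only the scalar recursion can produce $c''\alpha$. The constant sequence $\mathbf V_t\equiv c''\alpha/c'$ satisfies the recursion with equality, yet it exceeds $e^{-c'\alpha T}\mathbf V_0+c''\alpha$ for large $T$ whenever $c'<1$. The paper reaches $c''\alpha$ only through a slip: its unrolled bound carries $(1-\alpha_k)$ instead of $(1-c'\alpha_k)$, and the geometric sum is then evaluated without the $1/c'$. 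So the correct conclusion of this argument, yours and the paper's, is $\mathbf V_T\le e^{-c'\alpha T}\mathbf V_0+c''\alpha/c'$. State that (or redefine $c''$) rather than claim the printed form.

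Be equally careful with the constant bookkeeping you deferred, since not all of it can be repaired by shrinking $c_{\min}$. Reading the $C_{ij}$ ``exactly as listed'' does not reproduce the lemmas:
\begin{itemize}
\item Lemma \ref{lem2} yields $\hat c^2+16(1+\gamma)^2$ where $C_{11}$ lists $\bar c^2+16(1+\gamma)^2$.
\item Lemma \ref{lem2} yields $3\hat c^2$ where $C_{14}$ lists $3\hat c$.
\item After dividing through by $C_{23}\rho_{\mathbf M}$, the first entry of $c_{\min}$ contains $16C_{31}C_{13}/\rho_{\mathbf M}$, whereas your $E$-component needs $16C_{31}C_{13}/\rho_{\mathbf W}$.
\end{itemize}
These only affect the admissible step size and can be fixed by redefining the entries.

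The noise term is different. The $Y$-row noise of Lemma \ref{lem3}, weighted by $\rho_{\mathbf W}/(8C_{23})$, is $\frac{9\bar c^2\rho_{\mathbf W}}{8C_{23}\rho_{\mathbf M}}\sigma^2\hat c^2\alpha_t^2$. This is dominated by the corresponding part of $c''$ only if $\rho_{\mathbf W}\le\rho_{\mathbf M}$. Nothing in Assumption \ref{ass4} orders $\rho_{\mathbf W}$ and $\rho_{\mathbf M}$, so no ``crude'' bound is available, and $c''$ must carry the factor $\rho_{\mathbf W}/\rho_{\mathbf M}$ (or $\max\{1,\rho_{\mathbf W}/\rho_{\mathbf M}\}$). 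None of this alters the structure of the proof, but it does alter the constants in the conclusion.
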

\begin{proof}
	By Lemmas \ref{lem2}-\ref{lem5} and the fact that $\beta_t=\hat{c}\alpha_t$,  
	{\footnotesize\begin{align*}
		&\mathbf{V}_{t+1}\\
		 &\leq \left( 1 - 2\hat{c} \alpha_t + C_{11} \alpha_t^2 + \frac{C_{21} C_{13}}{C_{23}} \alpha_t^2 + \frac{8C_{31} C_{13}}{\rho_{\mathbf{W}}} \alpha_t^2\right.\\
		 &\quad \left. + \frac{8C_{41} C_{13}}{\rho_{\mathbf{W}}} \alpha_t \right)\frac{\rho_{\mathbf{W}}}{8C_{13}} \mathbb{E}[\Vert \mathbf{e}_{t} \Vert^2] \\
		&\quad+ \left( 1 - \rho_{\mathbf{M}} + C_{22} \alpha_t^2 + \frac{C_{12} C_{23}}{C_{13}} \alpha_t^2 + \frac{8C_{32} C_{23}}{\rho_{\mathbf{W}}} \alpha_t^2\right.\\
		&\quad \left. + \frac{8C_{42} C_{23}}{\rho_{\mathbf{W}}} \alpha_t \right) \frac{\rho_{\mathbf{W}}}{8C_{23}} \mathbb{E}[\Vert \mathbf{Y}_{t} - \bar{\mathbf{Y}}_{t} \Vert_{\widehat{\mathbf{M}}}^2] \\
		&\quad+ \left( 1 - \rho_{\mathbf{W}} + C_{33} \alpha_t^2 + \frac{\rho_{\mathbf{W}}}{4} + C_{43} \alpha_t \right) \mathbb{E}[\Vert \Theta_t - \bar{\Theta}_t \Vert_{\widehat{\mathbf{W}}}^2]\\
		&\quad+\left( \frac{3\rho_{\mathbf{W}}}{4C_{13}}\sigma^2 \hat{c}^2  + \frac{9\bar{c}^2}{8C_{23}} \sigma^2\hat{c}^2  \right)\alpha_t^2 \\
		&\quad+ \left( 1 - C_{44} \alpha_t + \frac{\rho_{\mathbf{W}} C_{14}}{8C_{13}} \alpha_t^2 + \frac{\rho_{\mathbf{W}} C_{24}}{8C_{13}} \alpha_t^2 + C_{34} \alpha_t^2 \right) \mathbb{E}[\Vert \bar{\theta}_t - \theta^* \Vert_2^2], \notag
	\end{align*}
	}where $C_{ij}$ denotes the $(i,j)$-th entry of matrix $\mathbf{C}$.

	Notice that $\alpha_t\le c_{min}$, $c'=\min \left\{ \frac{C_{44}}{2}, \hat{c},  \right\}$, $c''=\left(\frac{3 \rho_{W}}{4 C_{13}}+\frac{9 \bar{c}^{2}}{8 C_{23}}\right) \sigma^{2} \hat{c}^{2}$, 
	{\small\begin{align}
		&\mathbf{V}_{t+1}\notag\\
		 &\leq \left(1-\hat{c} \alpha_{t}\right) \frac{\rho_{\mathbf{W}}}{8 C_{13}} \mathbb{E}\left[\|\mathbf{e}_{t}\|^{2}\right] + \left(1-\frac{\rho_{\mathbf{M}}}{2}\right)\frac{\rho_{\mathbf{M}}}{8C_{23}} \mathbb{E}\left[\|\mathbf{Y}_{t} - \bar{\mathbf{Y}}_{t}\|^{2}_{\widehat{\mathbf{M}}}\right]\notag\\&\quad+ \left(1-\frac{\rho_{\mathbf{W}}}{2}\right) \mathbb{E}\left[\|\Theta_{t} - \bar{\Theta}_{t}\|^{2}_{\widehat{\mathbf{W}}}\right] + \left(1-\frac{C_{44} }{2}\alpha_t\right) E\left[\|\bar{\theta}_{t} - \theta^{*}\|_2^{2}\right]\notag\\&\quad+ \left(\frac{3 \rho_{\mathbf{W}}}{4 C_{13}}+\frac{9 \bar{c}^{2}}{8 C_{23}}\right)\sigma^2\hat{c}^2   \alpha_{t}^{2},\notag\\
		&\leq \left(1 - \min\left\{\frac{C_{44}}{2}, \hat{c}\right\} \alpha_{t}\right) \mathbf{V}_{t} + \left(\frac{3 \rho_{W}}{4 C_{13}}+\frac{9 \bar{c}^{2}}{8 C_{23}}\right) \sigma^{2} \hat{c}^{2} \alpha_{t}^{2}\notag\\
		&=(1-c'\alpha_t)\mathbf{V}_t+c''\alpha_t^2.\label{edg}
	\end{align}}

	\textbf{Part ($i$)}. The proof is by induction on $k$. Denote $Q = \max \left\{ \mathbf{V}_{0} t_0^{c_1}, \frac{2 c'' c_0}{c'} \right\}$. Obviously, when $t=0$, $\mathbf{V}_0 \le \frac{Q}{t_0^{c_1}}$.
	
	Assume that the inequality holds for some $t \ge 0$, i.e., 
	\begin{equation*}
		\mathbf{V}_t \le \frac{Q}{(t+t_0)^{c_1}}.
	\end{equation*}
	
	By  (\ref{edg}) and substituting the inductive hypothesis, we have
	\begin{align*}
		V_{t+1} &\le \frac{Q}{(t+t_0)^{c_1}} - \frac{Q c' c_0 - c'' c_0^2}{(t+t_0)^{2c_1}}\\
		&\le\frac{Q}{(t+t_0)^{c_1}} - \frac{Q\frac{c'c_0}{2}}{(t+t_0)^{2c_1}}\\
		&\le\frac{Q}{(t+t_0)^{c_1}} - \frac{\frac{Qc_1}{(t+t_0)^{1-c_1}}}{(t+t_0)^{2c_1}}
		\le\frac{Q}{(t+t_0+1)^{c_1}},
	\end{align*}
	where the second inequality is obtained by the fact that $Q \ge \frac{2 c'' c_0}{c'}$, the third inequality follows form the facts that $t_0^{1-k} \ge \frac{2c_1}{c'' c_0}$ and $\frac{Q k}{t_0^{1-k}}\ge\frac{Q k}{(t+t_0)^{1-k}}$, and the last inequality is obtained by the properties of Taylor expansion, i.e., $\frac{1}{(t+t_0+1)^{c_1}} \ge \frac{1}{(t+t_0)^{c_1}} - \frac{k}{(t+t_0)^{c_1+1}}$. Then, for all $t \ge 0$,
	\begin{equation*}
		\mathbf{V}_t \leq  \max \left\{ \mathbf{V}_{0} t_0^{c_1}, \frac{2 c'' c_0}{c'} \right\} \frac{1}{(t+t_0)^{c_1}}.
	\end{equation*}
	
	\textbf{Part ($ii$)}. Unrolling recursion of (\ref{edg}) gives
	\begin{align}
		\mathbf{V}_{t+1} \leq \prod_{l=0}^{t} (1 - c' \alpha_{l}) \mathbf{V}_{0} + c'' \sum_{l=0}^{t} \prod_{k=l+1}^{t} (1 - \alpha_{k}) \alpha_{l}^{2}. \label{e8}
	\end{align}

	Substituting $\alpha_t\equiv\alpha$ into (\ref{e8}), we obtain
	\begin{align*}
		\mathbf{V}_{T} &\leq \prod_{l=0}^{T-1} \left(1 - c' \alpha\right) \mathbf{V}_{0} + c'' \sum_{l=0}^{T-1} \prod_{k=l+1}^{T-1} \left(1 - \alpha\right) \alpha^2\\
		&\le\left(1-{c'}\alpha\right)^{\top}\mathbf{V}_0+{c''}\alpha\left[1- \left(1-{c'}\alpha\right)^{\top}\right]\\
		&\le e^{-c'\alpha T}\mathbf{V}_0+{c''}\alpha.
	\end{align*}
	The proof is complete.
\end{proof}
Theorem~\ref{t1} shows that PP-DTD achieves linear convergence to a neighborhood of the optimum under constant step-sizes, and a convergence rate of $\mathcal{O}(T^{-1})$ under decaying step-sizes, when the samples are i.i.d. This result matches the convergence rate of the standard TD algorithm in single-agent RL \cite{bhandari2018finite}. To the best of our knowledge, PP-DTD is the first distributed algorithm for policy evaluation in MARL over directed networks that attains this desired convergence rate.

\subsection{Convergence analysis under Markovian setting}

In this subsection, we present the convergence rate of PP-DTD under the Markovian setting,  that is the tuples $\bigl(s_t, \{r_t^i\}_{i\in\mathcal{V}}, s_{t+1}'\bigr)$ with $s_{t+1}'=s_{t+1}$ are collected along a single trajectory of a Markov chain.
The  proof under the Markovian setting follows a similar line to that of Theorem \ref{t1}, with the difference lying in the analysis of $\mathbb{E}\left[\|\mathbf{e}_t\|^2\right]$ due to the bias introduced by Markovian sampling. We use the technical results in \cite{bhandari2018finite,doan2021finite} to quantify the bias induced by Markov setting.
\begin{lem}\label{lem6}
	Suppose that Assumption \ref{ass2} holds, 
	\begin{itemize}
		\item [(i)]$\|g(\theta_t^i;\xi_t^i)\|_2\leq C_g$,
		\item[(ii)]$\|g^i(\theta_t^i)\|_2\leq C_g$, $\|g^i(\bar{\theta}_t)\|_2\leq C_g$,
	\end{itemize} 
	where $C_g:= r_{\text{max}}+2\mathcal{R}$.  
\end{lem}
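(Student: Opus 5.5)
The plan is to bound the semigradient expression term by term, using Assumption \ref{ass2} ($\|\phi(s)\|\le 1$), the reward bound $r^i\in[0,r_{\max}]$, and the fact that in the Markovian setting every iterate lies in the ball $\mathcal{X}$ of radius $\mathcal{R}$ because of the projection in Step 3 of Algorithm~\ref{alg:PP-DTD}.

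First, I will check that the arguments are bounded: $\|\theta_t^i\|_2\le\mathcal{R}$ for all $t\ge 1$ and $i\in\mathcal{V}$. For $t=0$ this requires choosing $\theta_0\in\mathcal{X}$, which I take as part of the setting (e.g., $\theta_0=0$). Since $\bar\theta_t=\sum_j\frac{u_j}{n}\theta_t^j$ with $u_j\ge0$ and $\sum_j u_j/n=1$, it is a convex combination of points of the convex set $\mathcal{X}$. Hence $\|\bar\theta_t\|_2\le\mathcal{R}$ as well.

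For part (i), take any $\theta$ with $\|\theta\|_2\le\mathcal{R}$ and any sample $\xi=\{s,r,s'\}$. By the triangle and Cauchy--Schwarz inequalities,
\begin{equation*}
\|g(\theta;\xi)\|_2\le\|\phi(s)\|\left(|r|+\gamma\|\phi(s')\|\|\theta\|_2+\|\phi(s)\|\|\theta\|_2\right)\le r_{\max}+(1+\gamma)\mathcal{R}\le r_{\max}+2\mathcal{R},
\end{equation*}
using $\gamma<1$. Applying this with $\theta=\theta_t^i$ gives (i).

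For part (ii), note that $g^i(\theta)=\mathbb{E}_{s\sim d_\pi,\,s'\sim p_{s\cdot}}[g(\theta;\xi)]$, since $r_\pi^i(s)$ is the conditional mean reward. By Jensen's inequality, or the triangle inequality applied to the convex combination with weights $d_\pi(s)p_{ss'}$ (which sum to $1$), $\|g^i(\theta)\|_2$ is at most the same pointwise bound $r_{\max}+2\mathcal{R}$. Evaluating at $\theta=\theta_t^i$ and at $\theta=\bar\theta_t$, both in $\mathcal{X}$, gives (ii).

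The only step that needs care is justifying that all the relevant points lie in $\mathcal{X}$: the initial point, and $\bar\theta_t$, which is not itself projected. The convexity argument handles $\bar\theta_t$. Note also that the $\|\cdot\|$ defining $\mathcal{X}$ is the Euclidean norm for vectors.
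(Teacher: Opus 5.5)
Your proposal is correct and follows the same route as the paper, which simply defers to \cite[Lemma 6]{bhandari2018finite}. That route is a direct triangle-inequality/Cauchy--Schwarz bound using $\|\phi(s)\|\le 1$, $|r|\le r_{\max}$, $\gamma<1$ and $\|\theta\|_2\le\mathcal{R}$ from the projection, applied pointwise for (i) and through the averaging in $g^i$ for (ii). Your extra observations fill in details the paper leaves implicit: that $\theta_0\in\mathcal{X}$ must be assumed at $t=0$, and that $\bar\theta_t\in\mathcal{X}$ because it is a convex combination (weights $u_j/n\ge 0$ summing to one) of projected iterates.
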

\begin{proof}
	The proof is similar to the proof of  \cite[Lemma 6]{bhandari2018finite}.
\end{proof}

Note that the Markov chain associated with $P_\pi$ (defined in (\ref{def:r&P})) is irreducible and aperiodic.  For any given small constant $\alpha$, there exist a constant $C_{mix}>0$
\cite{doan2021finite} such that
\begin{equation}\label{mixing}
	\begin{aligned} 
		\left\| \mathbb{E}\left[A(\xi_t) - {A} \mid s_0 \right] \right\| &\le \alpha, \quad \forall  t \ge \tau(\alpha) \\ 
		\left\| \mathbb{E}\left[b^i(\xi_t) - b^i \mid s_0\right]\right \| &\le \alpha, \quad \forall i\in \mathcal{V},\quad \forall t \ge \tau(\alpha), 
	\end{aligned}
\end{equation}
where $\tau{(\alpha)}=C_{mix}\text{log}\left(\frac{1}{\alpha}\right)$ is the mixing time,
\begin{align*}
	&A :=  \Phi^{\top} \text{diag}(\mathbf{d}_\pi) (\gamma P_\pi \Phi - \Phi) ,\quad b^i :=\sum_{s \in \mathcal{S}} d_\pi(s) r^i_\pi(s)\phi(s),\\
	&A(\xi_t):=\phi(s_t)\left(\gamma \phi(s_{t+1})^\top \theta - \phi(s_t)^\top\right),\quad b^i(\xi_t):=r_t^i \phi(s_t).
\end{align*}

We are ready to establish the convergence rate of PP-DTD under the Markovian setting. 
\begin{thm}\label{t2}
	Let
	{\small\begin{align*}
		\widetilde{\mathbf{V}}_{t} &:= \frac{\rho_{\mathbf{W}}}{8C_{3}'} \mathbb{E} [\Vert \mathbf{e}_{t} \Vert^2] + \frac{\rho_{\mathbf{W}}}{8C_{23}} \mathbb{E} [\Vert \mathbf{Y}_{t} - \bar{\mathbf{Y}}_{t} \Vert^2_{\widehat{\mathbf{M}}}] + \mathbb{E} [\Vert \Theta_{t} - \bar{\Theta}_{t} \Vert_{\widehat{\mathbf{W}}}^2]\\
		&\quad + \mathbb{E} [\Vert \bar{\theta}_{t} - \theta^* \Vert_2^2],
	\end{align*}}
	$\beta_t=\hat{c}'\alpha_t$, $\hat{c}' \ge  \frac{32 C_{41} C_{3}'}{\rho_{\mathbf{W}}}$, $c'_1=\min \left\{ \frac{C_{44}}{2}, \frac{\hat{c}'}{2}\right\}$, $c_1''=\frac{\rho_{\mathbf{W}}C_{4}'}{8 C_{3}'}+\frac{9 \bar{c}^{2}}{8 C_{23}}\sigma^2\hat{c}'^2$ and
	{\footnotesize\begin{align*}
	\alpha_t &\le c_{min}\\
	&:= \min \left\{ \begin{aligned}
		& \frac{\hat{c}' C_{23} \rho_{\mathbf{M}}}{4 C_{1}' C_{23} \rho_{\mathbf{M}} + 4 C_{21} C_{3}' \rho_{\mathbf{M}} + 32 C_{31} C_{3}' C_{23}},\\
		& \frac{4 C_{44} C_{3}'}{\rho_{\mathbf{W}}   C_{24} + 8 C_{23} C_{34}}, \frac{\rho_{\mathbf{W}}}{4(C_{33}+C_{43})}, \\
		& \frac{\rho_{\mathbf{M}}\rho_{\mathbf{W}} C_{3}'}{2(C_{3}'C_{22}\rho_{\mathbf{W}}+C_{2}'C_{23}\rho_{\mathbf{W}}+8C_{32}C_{23}C_{3}'+8C_{42}C_{23}C_{3}')},\\
		&\frac{\min \left\{ {\rho_{\mathbf{M}}}, {\rho_{\mathbf{W}}}\right\}}{\min \left\{ {C_{44}}, \hat{c}' \right\}},\frac{\|\mathbf{W}-\mathbf{I}\|}{\sqrt{2}(1+\gamma)},\frac{1-\gamma}{4},\frac{1}{2\hat{c}'}
	\end{aligned} \right\},
	\end{align*} }where
	{\small\begin{align*}
		C'_{1}&:=7+32(1+\gamma^2),\quad C'_{2}:=(32(1+\gamma^2)+6)\bar{c}^2,\\
		C'_{3}&:=\left(32(1+\gamma)^2+2\right)\left\|\mathbf{W}-\mathbf{I}\right\|^2\bar{c}^2,\\
		C'_{4}&:=32(1+\gamma)^2(r_{max}+2\mathcal{R})^2+14n(r_{max}+2\mathcal{R})^2+(\mathcal{R}+1)^2,
	\end{align*}}$C_{ij}$ denotes the $(i,j)$-th entry of matrix $\mathbf{C}$, $\sigma^2$ is defined Lemma \ref{lem2},  $\bar{c}$, $\rho_{\mathbf{W}}$ and $\rho_{\mathbf{M}}$ are defined in Lemma \ref{lem99}. Suppose that Assumptions \ref{ass2}, \ref{ass4} hold. Then 
	\begin{itemize}
		\item[(i)] If $\alpha_t = \frac{c'_0}{(t+t'_0)^{c_1}}$, $\beta_t = \frac{\hat{c}' c_0'}{(t+t_0')^{c_1}}$ with $c_1\in(0.5,1]$, $t_0'>1$, $\frac{c_0'}{t_0'^{c_1}}\le c_{min}'$ and $c_0'\ge\frac{2c_1}{c_1'}$,
		
		\begin{equation*} 
			\widetilde{\mathbf{V}}_T \leq\max\left\{\widetilde{\mathbf{V}}_{t'^*}(t'^*+t_0')^{c_1}, \frac{2c_1'' c_0'}{c_1'}\right\} \frac{1}{(T+t'_0)^{c_1}}
		\end{equation*}
		for any $T\ge t'^*:= \min \left\{ t \;\middle|\; t>C_{mix}\log(\frac{t+t'_{0}}{c'_{0}}) \right\}$.
		
		\item[(ii)] If $\alpha_t \equiv \alpha$ with $\alpha<\frac{1}{c_1'}$, $$\widetilde{\mathbf{V}}_T \le e^{-c_1'\alpha T}\mathbf{V}_0+{c_1''}\alpha$$ for any $T\ge t^*:= \min \left\{ t \;\middle|\; t \ge {C}_{mix} \log\left(\frac{1}{\alpha}\right) \right\}$.
	\end{itemize}
\end{thm}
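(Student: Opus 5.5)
The plan mirrors the proof of Theorem \ref{t1}: derive a one-step contraction $\widetilde{\mathbf{V}}_{t+1}\le(1-c_1'\alpha_t)\widetilde{\mathbf{V}}_t+c_1''\alpha_t^2$, valid once $t$ exceeds the mixing time, and then unroll it. Lemmas \ref{lem3}, \ref{lem4} and \ref{lem5} carry over almost verbatim to the Markovian setting. Their proofs use unbiasedness only through terms we can bound crudely. The projection $\mathbf{\Pi}_X$ is nonexpansive and preserves the consensus structure up to constants, since $\theta^*\in\mathcal{X}$ by the choice of $\mathcal{R}$ from \cite[Lemma 7]{bhandari2018finite}. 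Moreover, every iterate lies in $\mathcal{X}$, so Lemma \ref{lem6} bounds all semigradients and their expectations by $C_g=r_{\max}+2\mathcal{R}$. The only genuinely new ingredient is a replacement for Lemma \ref{lem2}, i.e. a recursion for $\mathbb{E}[\|\mathbf{e}_{t+1}\|^2]$.

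For that recursion, I expand $\mathbf{e}_{t+1}=(1-\beta_t)\mathbf{e}_t+(1-\beta_t)(\mathbf{G}^{\mathbb{E}}_{t+1}-\widetilde{\mathbf{G}}_{t+1})+(\mathbf{G}_{t+1}-\mathbf{G}^{\mathbb{E}}_{t+1})$ as before. The cross term $2\langle(1-\beta_t)\mathbf{e}_t,\cdot\rangle$ no longer vanishes.

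\textbf{Cross term.} I split the noise into two parts.
\begin{itemize}
\item The difference part $(\mathbf{G}_{t+1}-\widetilde{\mathbf{G}}_{t+1})-(\mathbf{G}^{\mathbb{E}}_{t+1}-\mathbf{G}^{\mathbb{E}}_t)$ is linear in $\theta^i_{t+1}-\theta^i_t$ with coefficient $A(\xi_{t+1})-A$. It is handled by Young's inequality: a piece $\tfrac{\beta_t}{2}\|\mathbf{e}_t\|^2$ is absorbed into the $(1-\beta_t)^2$ contraction, and the remainder is $\mathcal{O}(\beta_t^{-1})\|\Theta_{t+1}-\Theta_t\|^2=\mathcal{O}(\alpha_t)$-weighted consensus and tracking terms. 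This is where $\hat c'$ has to be doubled relative to Theorem \ref{t1}, hence $c_1'=\min\{C_{44}/2,\hat c'/2\}$.
\item The $\beta_t$-part $\beta_t\langle\mathbf{e}_t,\mathbf{G}_{t+1}-\mathbf{G}^{\mathbb{E}}_{t+1}\rangle$ carries the Markov bias. Following \cite{bhandari2018finite,doan2021finite}, I condition on $\mathcal{F}_{t-\tau}$ with $\tau=\tau(\alpha_t)$. I replace $\mathbf{e}_t$ and $\Theta_{t+1}$ by their values at time $t-\tau$, paying $\mathcal{O}(\tau\alpha_t C_g^2)$ via Lemma \ref{lem6} since each step moves iterates by $\mathcal{O}(\alpha_t C_g)$. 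I then apply (\ref{mixing}) to get a bias of at most $\alpha_t$.
\end{itemize}
Together these give an additive error $\mathcal{O}(\beta_t\alpha_t\tau C_g^2)$. The constant $C_4'$ collects the $(r_{\max}+2\mathcal{R})^2$ and $(\mathcal{R}+1)^2$ factors arising here and from $\beta_t^2\|\mathbf{G}_{t+1}-\mathbf{G}^{\mathbb{E}}_{t+1}\|^2\le 4\beta_t^2nC_g^2$, which replaces the $\sigma^2$ bound. The $\log(1/\alpha)$ from $\tau$ can be absorbed into the constants, or else bounded via the restriction $t\ge t^*$, which is exactly the condition $t-\tau(\alpha_t)\ge0$ defining $t'^*$ and $t^*$. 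The remaining terms go as in Lemma \ref{lem2}, with $\|\Theta_{t+1}-\Theta_t\|\le\|(\mathbf{W}-\mathbf{I})\Theta_t-\alpha_t\mathbf{Y}_t\|$ by nonexpansiveness of the projection, producing the constants $C_1',C_2',C_3'$.

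With this lemma in hand, I form $\widetilde{\mathbf{V}}_t$ with weight $\rho_{\mathbf{W}}/(8C_3')$ on $\mathbb{E}\|\mathbf{e}_t\|^2$. I add the four recursions exactly as in (\ref{edg}) and use the listed upper bounds in $c_{min}'$ to make each coefficient at most $1-c_1'\alpha_t$. This yields $\widetilde{\mathbf{V}}_{t+1}\le(1-c_1'\alpha_t)\widetilde{\mathbf{V}}_t+c_1''\alpha_t^2$ for $t\ge t'^*$ (resp.\ $t^*$).

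For part (i), I repeat the induction of Theorem \ref{t1}(i), starting at $t'^*$ instead of $0$. For part (ii), I unroll from $t^*$ as in (\ref{e8}). The main obstacle is the Markov cross term: the iterate drift over the window $\tau$ must be controlled without circularity, which is why the uniform bounds from the projection (Lemma \ref{lem6}) are essential.
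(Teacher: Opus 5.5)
Your plan breaks at the first bullet, the ``difference part'' of the cross term. Write $\mathbf{Z}_t$ for $(\mathbf{G}_{t+1}-\widetilde{\mathbf{G}}_{t+1})-(\mathbf{G}^{\mathbb{E}}_{t+1}-\mathbf{G}^{\mathbb{E}}_t)$. Young's inequality gives
\[
2(1-\beta_t)^2\langle \mathbf{e}_t,\mathbf{Z}_t\rangle\le \tfrac{\beta_t}{2}\|\mathbf{e}_t\|^2+\tfrac{8(1+\gamma)^2}{\beta_t}\|\Theta_{t+1}-\Theta_t\|^2 .
\]
As your own nonexpansiveness bound shows, $\Theta_{t+1}-\Theta_t$ contains $(\mathbf{W}-\mathbf{I})(\Theta_t-\bar{\Theta}_t)$ with no factor $\alpha_t$. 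So only the tracking part of the remainder is $\mathcal{O}(\alpha_t)$-weighted. The consensus error enters the $\mathbf{e}$-recursion with a coefficient of order $(1+\gamma)^2\|\mathbf{W}-\mathbf{I}\|^2\bar{c}^2/(\hat{c}'\alpha_t)$. With the fixed weight $\rho_{\mathbf{W}}/(8C_3')$ on $\mathbb{E}[\|\mathbf{e}_t\|^2]$, the coefficient of $\mathbb{E}[\|\Theta_t-\bar{\Theta}_t\|^2_{\widehat{\mathbf{W}}}]$ in $\widetilde{\mathbf{V}}_{t+1}$ becomes $1-\rho_{\mathbf{W}}+\mathcal{O}(1/\alpha_t)$. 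That exceeds $1$ as $\alpha_t\to 0$, so the contraction $(1-c_1'\alpha_t)$ is lost; this is fatal for the decaying steps in (i).

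Changing the Young weight cannot help, because the $\mathbf{e}$-recursion only contracts at rate $\beta_t$. Shrinking the weight on $\mathbb{E}[\|\mathbf{e}_t\|^2]$ to $\mathcal{O}(\alpha_t)$ fails too, since it no longer absorbs the $C_{41}\alpha_t\|\mathbf{e}_t\|^2$ term from Lemma \ref{lem5}.

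The missing idea is that this cross term must also be shown small in expectation, not merely bounded. The paper does exactly this. For $t>\tau(\alpha_t)$ it uses (\ref{mixing}) to bound $\mathbb{E}[(A(\xi_{t+1})-A)^\top\mid\cdot]$ by $\alpha_t$, so the cross term is at most $\alpha_t\,\mathbb{E}[\|\mathbf{e}_t\|\,\|(\mathbf{W}-\mathbf{I})(\Theta_t-\bar{\Theta}_t)+\alpha_t\mathbf{Y}_t\|]$. Equal-weight Young then puts only $\alpha_t$ on $\|\mathbf{e}_t\|^2$ and an $\mathcal{O}(1)$ coefficient on the consensus error. That coefficient is exactly $C_3'=(32(1+\gamma)^2+2)\|\mathbf{W}-\mathbf{I}\|^2\bar{c}^2$. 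The halving $\hat{c}'/2$ in $c_1'$ comes instead from the $\beta_t$-part, via $(1-\beta_t)^2+\beta_t(1-\beta_t)=1-\beta_t$.

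Your look-back treatment of the $\beta_t$-part has a similar miscount, for three reasons. First, local iterates do not move by $\mathcal{O}(\alpha_t C_g)$ per step: $\theta^i_{t+1}-\theta^i_t$ contains $\sum_j w_{ij}\theta^j_t-\theta^i_t=\mathcal{O}(\|\Theta_t-\bar{\Theta}_t\|)$. Second, $\mathbf{e}_{t+1}-\mathbf{e}_t$ contains $(1-\beta_t)\mathbf{Z}_t$, which again has the size of $\|\Theta_{t+1}-\Theta_t\|$. Third, Lemma \ref{lem6} says nothing about $\mathbf{Q}_t$ or $\mathbf{Y}_t$.

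So replacing $\mathbf{e}_t$ and $\Theta_{t+1}$ by their values at $t-\tau$ costs past consensus errors over the whole window, not $\mathcal{O}(\tau\alpha_t)$. You would have to look back only on $\bar{\theta}_t$ and split off the disagreement separately. Even granting your estimate, the resulting $\mathcal{O}(\beta_t\alpha_t\tau(\alpha_t))=\mathcal{O}(\alpha_t^2\log(1/\alpha_t))$ cannot be absorbed into a $t$-independent $c_1''$ for decaying steps. The restriction $t\ge t'^*$ only guarantees $t-\tau(\alpha_t)\ge 0$; it does not bound $\tau(\alpha_t)$. This route therefore yields an $\mathcal{O}(\log T/T)$-type bound, not the stated one.

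The paper's $C_4'$ contains no $\tau$ because it conditions on the current iterates and applies (\ref{mixing}) with no drift term at all. That shortcut is itself loose, since (\ref{mixing}) controls the conditional expectation given the state $\tau$ steps back. Your more careful look-back is the right instinct, but you then have to pay the drift cost honestly, and you cannot recover the stated constants.
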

\begin{proof}
	Denote  $\mathbf{B}(\xi_t)=[b^1(\xi_t^1),b^2(\xi_t^2),\cdots,b^n(\xi_t^n)]^\top$, $\mathbf{B}=[b^1,b^2,\cdots,b^n]^\top$.
	
	By the definition of $\mathbf{e}_t$ in (\ref{ea}), we have
	{\small\begin{align*}
		&\mathbb{E}\left[\|\mathbf{e}_{t+1}\|^2\right]\\ 
		=& (1-\beta_t)^2 \mathbb{E}\left[\|\mathbf{e}_t\|^2\right]\\
		& + \mathbb{E}\left[\left\| (1-\beta_t) \underbrace{(\mathbf{G}_{{t}}^\mathbb{E}-\mathbf{G}_{t+1}^{\mathbb{E}}+\mathbf{G}_{t+1} - \tilde{\mathbf{G}}_{{t+1}})}_{\mathbf{Z}_t}\right.\right.\\
		&\left. \left.+ \beta_t\underbrace{(\mathbf{G}_{{t+1}} - {\mathbf{G}}_{{t+1}}^\mathbb{E})}_{\mathbf{Z}_t'} \right\|^2\right] \notag \\
		&+ 2(1-\beta_t)^2\mathbb{E} \left[\left\langle \mathbf{e}_t, \mathbf{Z}_t \right\rangle\right] + 2\beta_t (1-\beta_t) \mathbb{E}\left[\left\langle \mathbf{e}_t, \mathbf{Z}_t' \right\rangle\right] \notag\\
		\le& (1-\beta_t)^2\mathbb{E}\left[ \|\mathbf{e}_t\|^2\right] + 2(1-\beta_t)^2\mathbb{E}\left[\|\mathbf{Z}_t\|^2\right]+2\beta_t^2\mathbb{E}\left[\|\mathbf{Z}_t'\|^2\right]\notag\\
		\quad&+ 2(1-\beta_t)^2 \mathbb{E}\left[\left|\left\langle \mathbf{e}_t, \left((\mathbf{W}-\mathbf{I}) ({\Theta}_t - \bar{{\Theta}}_t) + \alpha_t\mathbf{Y}_t\right)\right.\right.\right.\\
		&\left.\left.\left.\mathbb{E}[(A(\xi_{t+1})-A)^\top|\mathbf{e}_t,{\Theta}_t,\mathbf{Y}_t] \right\rangle\right|\right]+ 2(1-\beta_t)\beta_t \mathbb{E}\left[\left|\left\langle \mathbf{e}_t,\right.\right.\right.\\
		&\left.\left.\left. \mathbb{E}[\Theta_{t+1}(A(\xi_{t+1})-A)^\top+(\mathbf{B}(\xi_{t+1})-\mathbf{B})|\mathbf{e}_t,\Theta_{t+1}] \right\rangle\right|\right].\notag
	\end{align*}}
	
	For the second term on the right-hand side of the above inequality, 
	\begin{align}
		\mathbb{E}[\|\mathbf{Z}_t\|^2] &= \mathbb{E}\left[ \left\| \mathbf{G}_{{t}}^\mathbb{E} - \mathbf{G}_{{t+1}}^\mathbb{E} + \mathbf{G}_{{t+1}} - \widetilde{\mathbf{G}}_{{t+1}} \right\|^2 \right] \notag\\
		&\le 4(1+\gamma)^2 \mathbb{E}[\|{\Theta}_{t+1} - {\Theta}_t\|^2] \notag\\
		&= 4(1+\gamma)^2 \mathbb{E}\left[ \left\| (\mathbf{W}-\mathbf{I}) ({\Theta}_t - \bar{{\Theta}}_t) \right.\right.\notag\\
		&\left.\left.\quad+ \alpha_t(\mathbf{Y}_t - \bar{\mathbf{Y}}_t + \bar{\mathbf{Y}}_t -\bar{\mathbf{G}}_t+ \bar{\mathbf{G}}_t) \right\|^2 \right] \notag\\
		&\le 16(1+\gamma)^2 \left( \|\mathbf{W}-\mathbf{I}\|^2 \bar{c}^2\mathbb{E}\left[\|{\Theta}_t - \bar{{\Theta}}_t\|_{\widehat{\mathbf{W}}}^2\right] \right.\notag\\
		&\left.\quad+ \alpha_t^2 \bar{c}^2\mathbb{E}\left[\|\mathbf{Y}_t - \bar{\mathbf{Y}}_t\|^2_{\widehat{\mathbf{M}}}\right] \right. \notag\\
		&\quad \left. + \alpha_t^2 \mathbb{E}\left[\|\mathbf{e}_t\|^2\right] + n(r_{max}+2\mathcal{R})^2\alpha_t^2  \right),\notag
	\end{align}
	where the second inequality follows from Lemma \ref{lem99}.
	
	When $t>\tau(\alpha_t)$, by the property of mixing time (\ref{mixing}),
	{\footnotesize\begin{align*}
		&\mathbb{E}\left[\left|\left\langle \mathbf{e}_t, \left((\mathbf{W}-\mathbf{I}) ({\Theta}_t - \bar{{\Theta}}_t) + \alpha_t\mathbf{Y}_t\right)\mathbb{E}[(A(\xi_{t+1})-A)^\top|\mathbf{e}_t,{\Theta}_t, \mathbf{Y}_t] \right\rangle\right|\right]\\
		&\le\mathbb{E}\left[ \alpha_t\left\|\mathbf{e}_t\right\|  \left\|(\mathbf{W}-\mathbf{I}) ({\Theta}_t - \bar{{\Theta}}_t) + \alpha_t(\mathbf{Y}_t - \bar{\mathbf{Y}}_t + \bar{\mathbf{Y}}_t -\bar{\mathbf{G}}_t+ \bar{\mathbf{G}}_t)\right\|  \right]\\
		&\le\frac{\alpha_t}{2}\mathbb{E}\left[\|\mathbf{e}_t\|^2\right]+\alpha_t\mathbb{E}\left[\left\|(\mathbf{W}-\mathbf{I}) ({\Theta}_t - \bar{{\Theta}}_t)\right\|^2\right]\\
		&\quad+3\alpha_t^3\mathbb{E}\left[  \bar{c}^2\left[\|\mathbf{Y}_t - \bar{\mathbf{Y}}_t\|^2_{\widehat{\mathbf{M}}}\right]+\|\mathbf{e}_t\|^2+ n(r_{max}+2\mathcal{R})^2\right]
	\end{align*}}
	and
	{\footnotesize\begin{align*}
		&\mathbb{E}\left[\left|\left\langle \mathbf{e}_t, \mathbb{E}[\Theta_{t+1}(A(\xi_{t+1})-A)^\top+(\mathbf{B}(\xi_{t+1})-\mathbf{B})|\mathbf{e}_t,\Theta_{t+1}] \right\rangle\right|\right]\\
		\le&\frac{1}{2}\mathbb{E}\left[\|\mathbf{e}_t\|^2\right]+(\mathcal{R}^2+1)\alpha_t^2.
	\end{align*}}
	
	Then
	\begin{align}
		&\mathbb{E}\left[\|\mathbf{e}_{t+1}\|^2\right]\notag\\
		 &\le (1-\beta_t+\alpha_t+32(1+\gamma)^2\alpha_t^2+6\alpha_t^3)\mathbb{E} \left[\|\mathbf{e}_t\|^2\right]\notag\\
		&\quad+\left(32(1+\gamma)^2+2\alpha_t\right)\left\|\mathbf{W}-\mathbf{I}\right\|^2\bar{c}^2\mathbb{E}\left[\|{\Theta}_t - \bar{{\Theta}}_t\|_{\widehat{\mathbf{W}}}^2\right]\notag\\
		&\quad+\left(32(1+\gamma)^2\alpha_t^2+6\alpha_t^3\right)\bar{c}^2\mathbb{E}\left[\|\mathbf{Y}_t - \bar{\mathbf{Y}}_t\|^2_{\widehat{\mathbf{M}}}\right]\notag\\
		&\quad+32n(1+\gamma)^2(r_{max}+2\mathcal{R})^2\alpha_t^2+8n(r_{max}+2\mathcal{R})^2\beta_t^2\notag\\
		&\quad+6n(r_{max}+2\mathcal{R})^2\alpha_t^3+(\mathcal{R}+1)^2\alpha_t^2,\label{e1155}
	\end{align}
	where the inequality is obtained by Young's inequality and $\mathbb{E}[\|\mathbf{Z}'_t\|^2]\le4n(r_{max}+\mathcal{R})^2$.

	By (\ref{e1155}), Lemmas \ref{lem3}-\ref{lem5} and the fact that $\beta_t=\hat{c}'\alpha_t$,  
{\small	\begin{align*}
		&\widetilde{\mathbf{V}}_{t+1}\\
		 &\leq \left( 1 - {\hat{c}} \alpha_t + C_{1}' \alpha_t + \frac{C_{21} C_{3}'}{C_{23}} \alpha_t^2 + \frac{8C_{31} C_{3}'}{\rho_{\mathbf{W}}} \alpha_t^2 \right.\\
		 &\left.\quad+ \frac{8C_{41} C_{3}'}{\rho_{\mathbf{W}}} \alpha_t \right) \frac{\rho_{\mathbf{W}}}{8C_{3}'} \mathbb{E}[\Vert \mathbf{e}_{t} \Vert^2] \\
		&\quad+ \left( 1 - \rho_{\mathbf{M}} + C_{22} \alpha_t^2 + \frac{C_{2}' C_{23}}{C_{3}'} \alpha_t^2 + \frac{8C_{32} C_{23}}{\rho_{\mathbf{W}}} \alpha_t^2 \right.\\
		&\left.\quad+ \frac{8C_{42} C_{23}}{\rho_{\mathbf{W}}} \alpha_t \right) \frac{\rho_{\mathbf{W}}}{8C_{23}} \mathbb{E}[\Vert \mathbf{Y}_{t} - \bar{\mathbf{Y}}_{t} \Vert_{\widehat{\mathbf{M}}}^2] \\
		&\quad+ \left( 1 - \rho_{\mathbf{W}} + C_{33} \alpha_t^2 + \frac{\rho_{\mathbf{W}}}{4} + C_{43} \alpha_t \right) \mathbb{E}[\Vert \Theta_t - \bar{\Theta}_t \Vert_{\widehat{\mathbf{W}}}^2]\\
		&\quad+\left( \frac{\rho_{\mathbf{W}}C_{4}'}{8C_{3}'}  + \frac{9\bar{c}^2}{8C_{23}} \sigma^2\hat{c}'^2  \right)\alpha_t^2 \\
		&\quad+ \left( 1 - C_{44} \alpha_t  + \frac{\rho_{\mathbf{W}} C_{24}}{8C_{3}'} \alpha_t^2 + C_{34} \alpha_t^2 \right) \mathbb{E}[\Vert \bar{\theta}_t - \theta^* \Vert_2^2]. \notag
	\end{align*}}
	Notice that $\alpha_t\le c_{min}'$, $c_1'=\min \left\{ \frac{C_{44}}{2}, \frac{\hat{c}'}{2},  \right\}$, $c_1''=\frac{\rho_{\mathbf{W}}C_{4}'}{8 C_{3}'}+\frac{9 \bar{c}^{2}}{8 C_{23}}\sigma^2\hat{c}'^2$, 
	\begin{align}
		\widetilde{\mathbf{V}}_{t+1}
		&\leq \left(1-\frac{\hat{c}'}{2} \alpha_{t}\right) \frac{\rho_{\mathbf{W}}}{8 C_{3}'} \mathbb{E}\left[\|\mathbf{e}_{t}\|^{2}\right]\notag\\ 
		&\quad+ \left(1-\frac{\rho_{\mathbf{M}}}{2}\right)\frac{\rho_{\mathbf{M}}}{8C_{23}} \mathbb{E}\left[\|\mathbf{Y}_{t} - \bar{\mathbf{Y}}_{t}\|^{2}_{\widehat{\mathbf{M}}}\right]\notag\\
		&\quad+ \left(1-\frac{\rho_{\mathbf{W}}}{2}\right) \mathbb{E}\left[\|\Theta_{t} - \bar{\Theta}_{t}\|^{2}_{\widehat{\mathbf{W}}}\right]\notag\\
		&\quad + \left(1-\frac{C_{44} }{2}\alpha_t\right) E\left[\|\bar{\theta}_{t} - \theta^{*}\|_2^{2}\right]\notag\\
		&\quad+\left( \frac{\rho_{\mathbf{W}}C_{4}'}{8C_{3}'}  + \frac{9\bar{c}^2}{8C_{23}} \sigma^2\hat{c}'^2  \right)\alpha_t^2 ,\notag\\
		&\leq \left(1 - \min\left\{\frac{C_{44}}{2}, \frac{\hat{c}'}{2}\right\} \alpha_{t}\right) \widetilde{\mathbf{V}}_{t}\notag \\
		&\quad+ \left( \frac{\rho_{\mathbf{W}}C_{4}'}{8C_{3}'}  + \frac{9\bar{c}^2}{8C_{23}} \sigma^2\hat{c}'^2  \right)\alpha_t^2\notag\\
		&=(1-c_1'\alpha_t)\widetilde{\mathbf{V}}_t+c_1''\alpha_t^2.\label{edg2}
	\end{align}

	By a similar analysis of Theorem \ref{t1}, we arrive at ($i$) and ($ii$). The proof is complete.
\end{proof}
Similar to the i.i.d. setting, PP-DTD achieves linear convergence to a neighborhood of the optimum under constant step-sizes, and a convergence rate of $\mathcal{O}(T^{-1})$ under decaying step-sizes. Different from the i.i.d. setting,   the convergence rates under markov setting  depend on the mixing times $t^*$ and $t'^*$ rather than on the initial step directly.  Recently, \cite{lin2024finite} proposed a push-sum-type distributed TD algorithm, Push-SA, over directed communication networks, and established a finite-time bound that asymptotically converges to zero under Markovian sampling.  To the best of our knowledge, PP-DTD is the first distributed algorithm for  policy evaluation in MARL over directed graphs that attains a convergence rate of $\mathcal{O}(T^{-1})$ under the Markovian setting.

\section{Numerical experiments}\label{sec:numerical}
\begin{figure}[htb]
	\subfigure[$n=20$]{
		\includegraphics[width=3.4in]{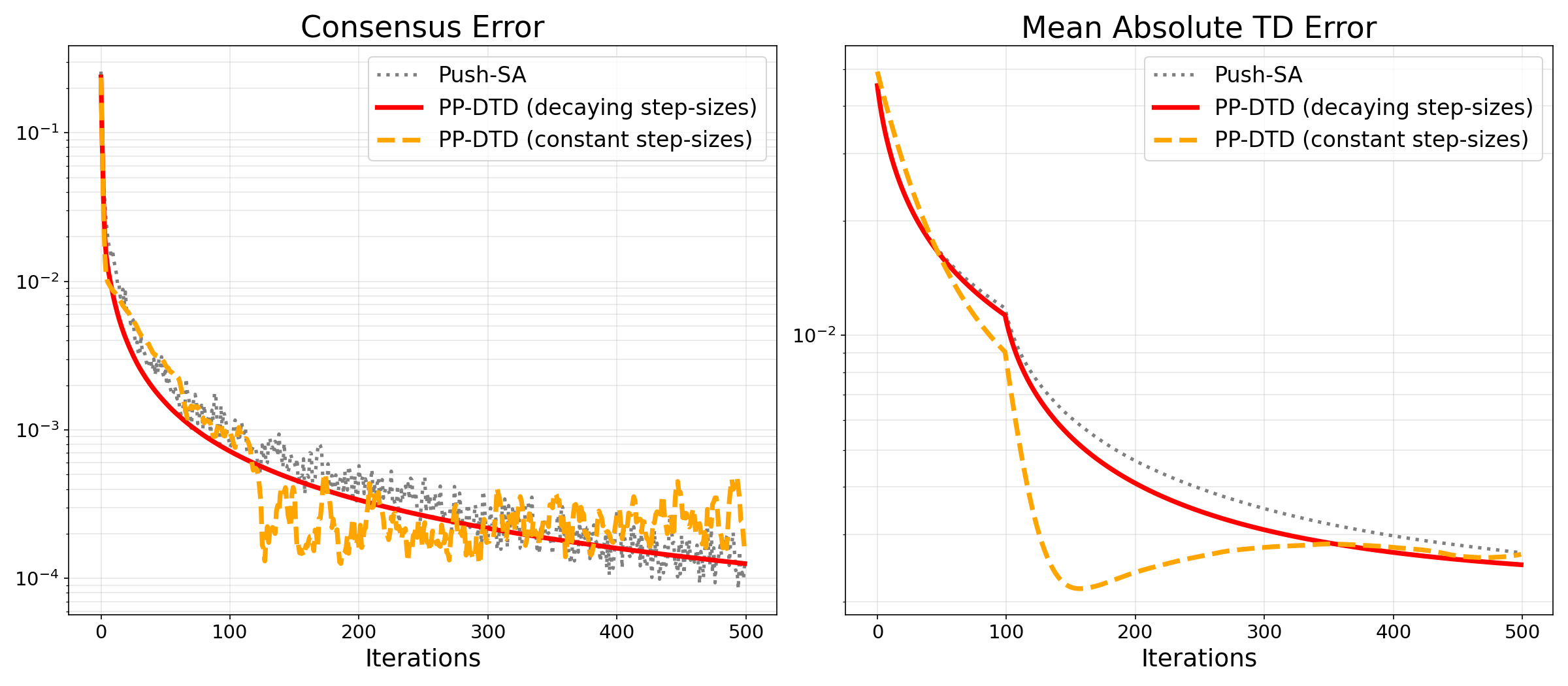}
	}\\
	\subfigure[$n=40$]{
		\includegraphics[width=3.4in]{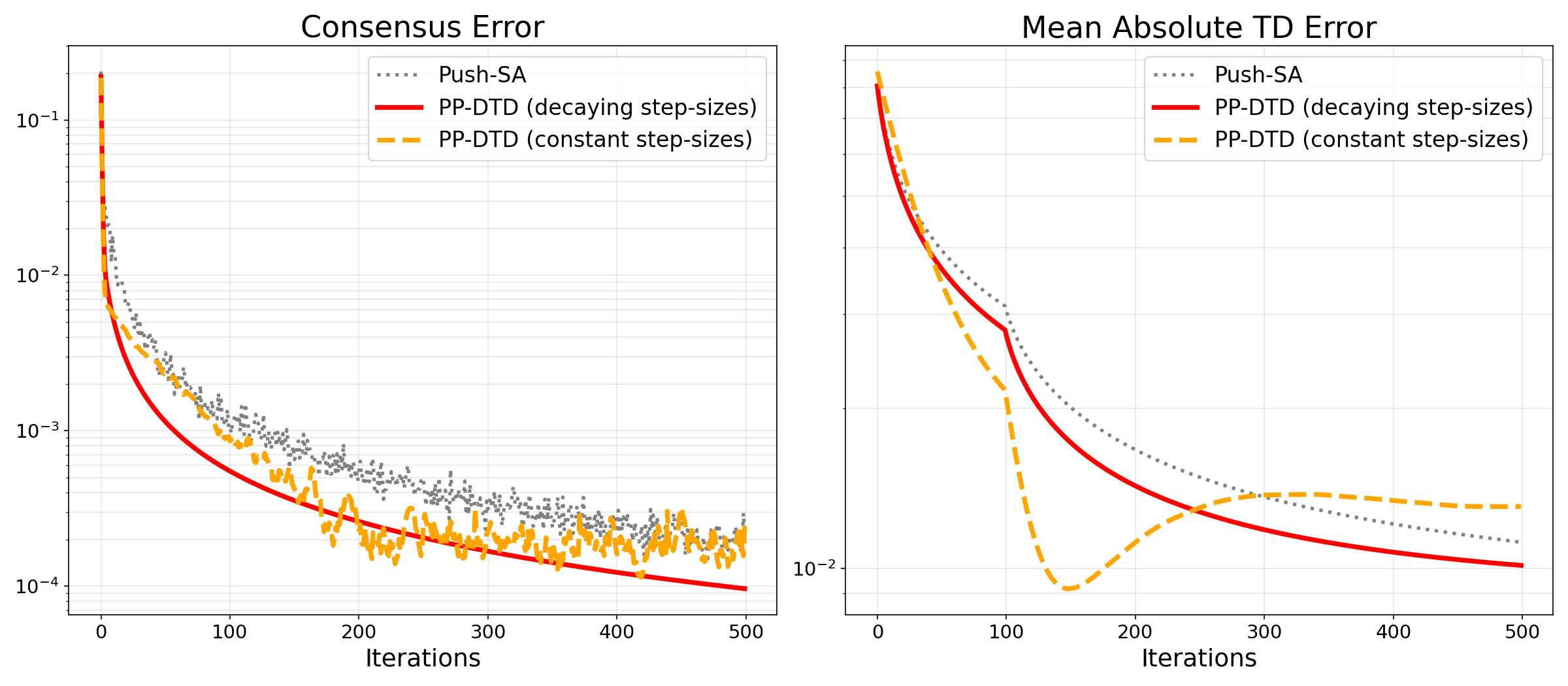}
	}\\
	\subfigure[$n=80$]{
		\includegraphics[width=3.4in]{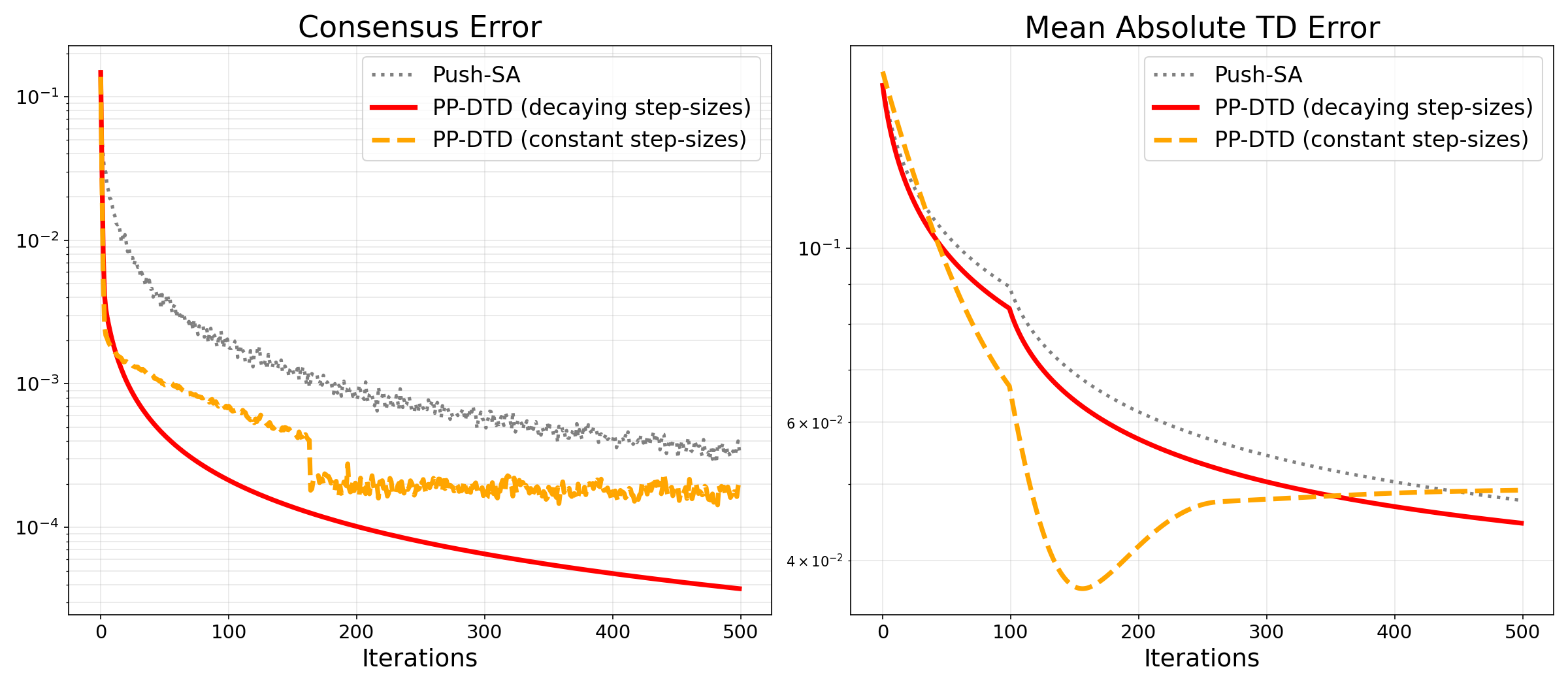}
	}
	\caption{i.i.d. setting.}\label{fig:iid}
\end{figure}

\begin{figure}[htb]
	\subfigure[$n=20$]{
		\includegraphics[width=3.4in]{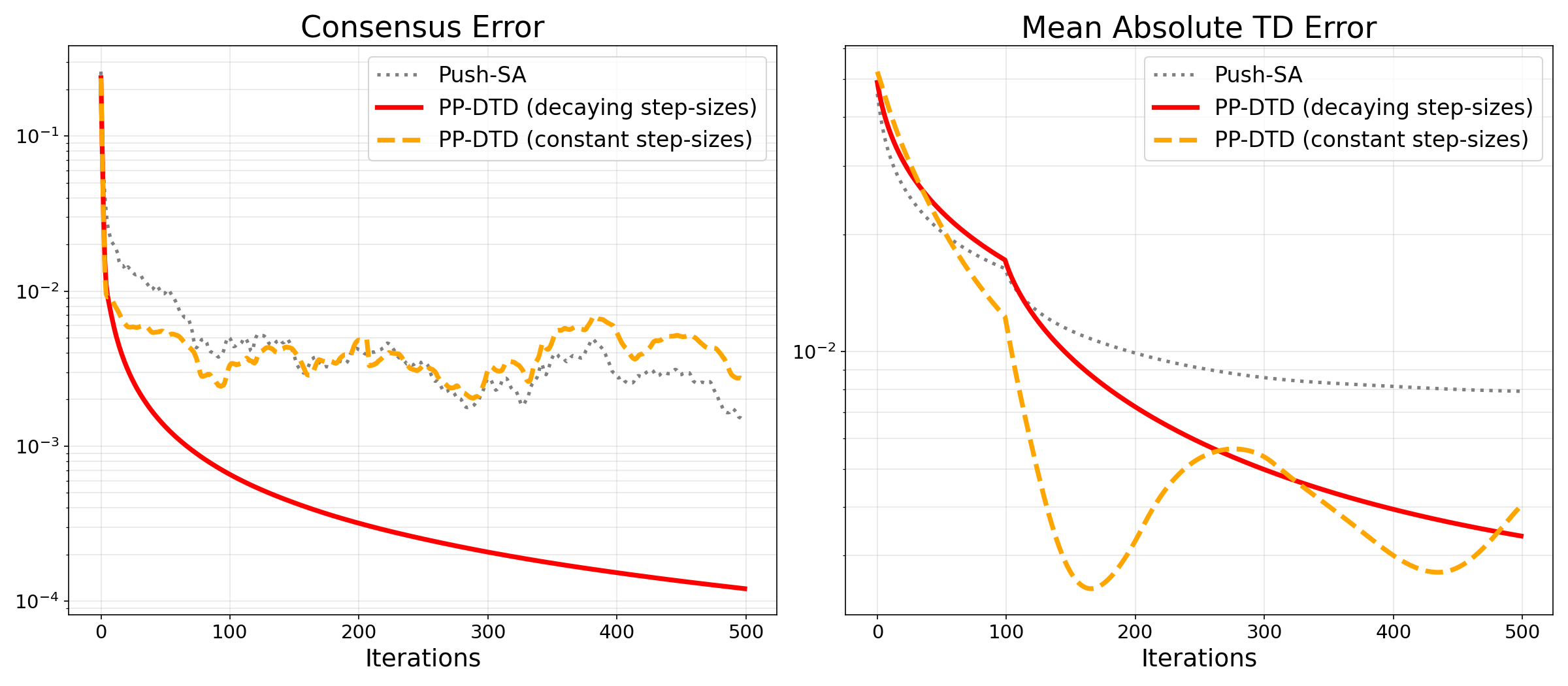}
	}\\
	\subfigure[$n=40$]{
		\includegraphics[width=3.4in]{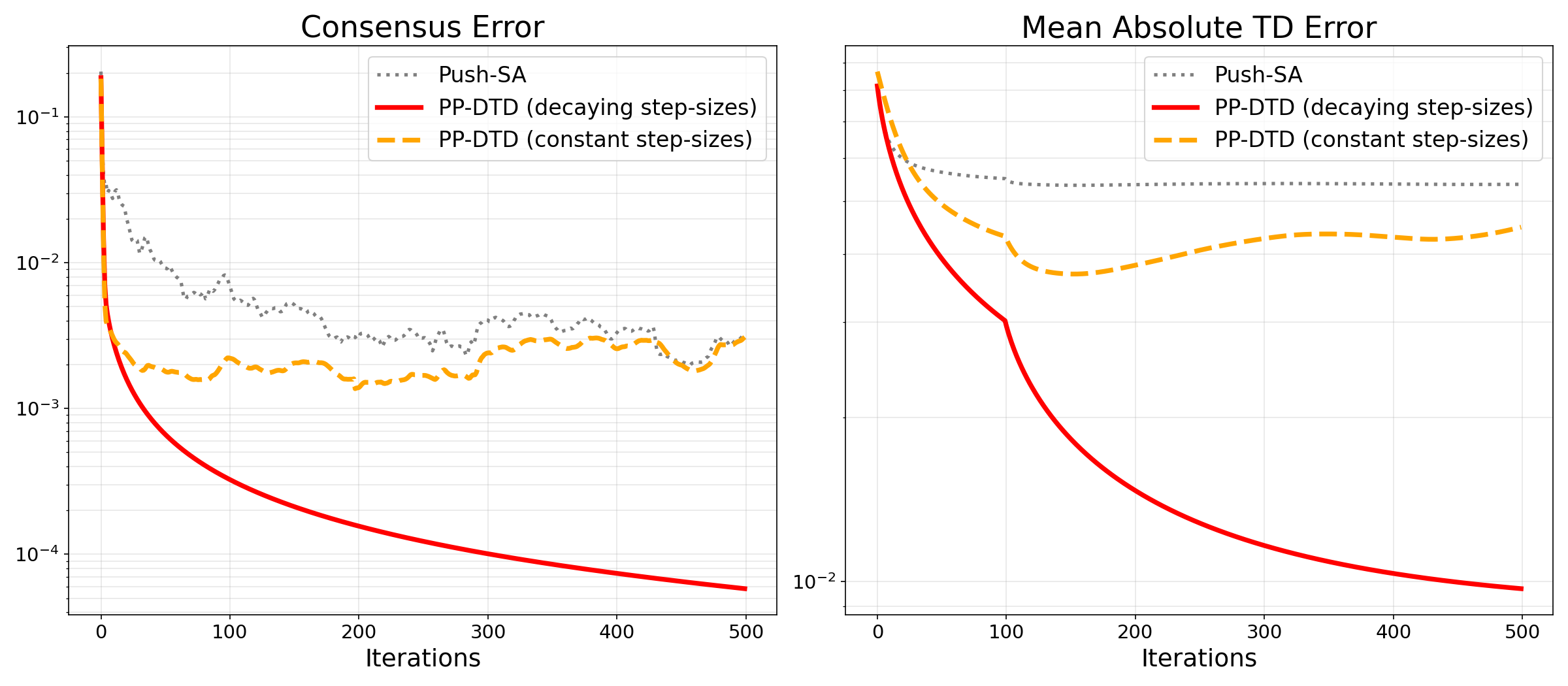}
	}\\
	\subfigure[$n=80$]{
		\includegraphics[width=3.4in]{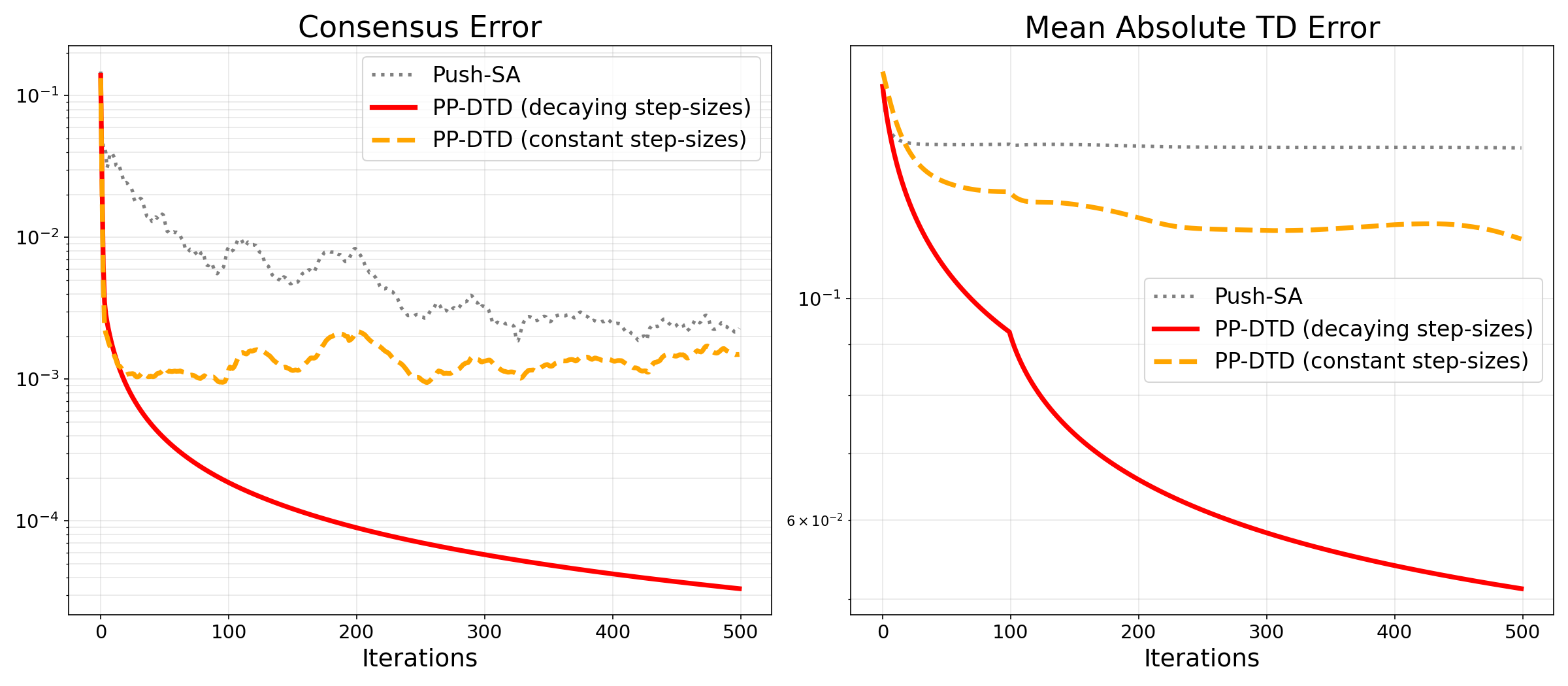}
	}
	\caption{Markovian setting.}\label{fig:markov}
\end{figure}
In this section, we evaluate the empirical performance of  PP-DTD on the cooperative navigation task \cite{lowe2017multi}, where $n$ agents are each given a personal target landmark and work together to cover all landmarks.  In every step, each agent follows its policy to select an action from the set $\{\text{up, down,  left, right, stay}\}$ and receives a local reward based on its distance to its assigned landmark, along with an extra penalty for collisions with other agents. The underlying communication graph $\mathcal{G}$ with $n$ agents is generated by adding random links to a ring network, where a directed link exists between any two nonadjacent nodes with probability $p=0.3$. The discount factor is $\gamma = 0.9$, and state features are constructed by radial basis functions.

We compare the performance of PP-DTD and the Push-SA method proposed in \cite{lin2024finite} across network sizes $n = 20, 40, 80$. For Push-SA, we set the step-sizes as $\alpha_t = \frac{a}{t+5}$. For PP-DTD, we set either $\alpha_t = \frac{a}{t+5}$ (decaying) or $\alpha_t \equiv a$ (constant), along with the momentum parameter $\beta_t = \frac{b}{t+5}$. The parameters $a$ and $b$ are optimized over $100$ equally spaced points in the interval $[10^{-3}, 5]$. For any agent $i$, we take
$\mathcal{G}_\mathbf{W}=\mathcal{G}_\mathbf{M}=\mathcal{G}$ and
\begin{equation*}
	\begin{aligned}
		&w_{ij}=\left\{
		\begin{aligned}
			&\frac{1}{|\mathcal{N}_{\mathbf{W},i}^{\text{in}}|+1},\quad j\in \mathcal{N}_{\mathbf{W},i}^{\text{in}},\\
			&1-\sum_{j\in\mathcal{N}_{\mathbf{W},i}^{\text{in}}}\mathbf{W}_{ij},\quad j=i,
		\end{aligned}\right.
		\\
		&m_{ji}=\left\{
		\begin{aligned}
			&\frac{1}{|\mathcal{N}_{\mathbf{M},i}^{\text{out}}|+1},\quad j\in\mathcal{N}_{\mathbf{M},i}^{\text{out}},\\
			&1-\sum_{j\in\mathcal{N}_{\mathbf{M},i}^{\text{out}}}\mathbf{M}_{ji},\quad j=i,
		\end{aligned}\right.
	\end{aligned}
\end{equation*}
where  $\mathcal{N}_{\mathbf{W},i}^{\text{in}}$ and $\mathcal{N}_{\mathbf{M},i}^{\text{out}}$  denote  the sets of in-neighbors and out-neighbors of agent $i$, 
$|\mathcal{N}_{\mathbf{W},i}^{\text{in}}|$ and $|\mathcal{N}_{\mathbf{M},i}^{\text{out}}|$ are the  cardinality of $\mathcal{N}_{\mathbf{W},i}^{\text{in}}$ and $\mathcal{N}_{\mathbf{M},i}^{\text{out}}$.

We run PP-DTD and Push-SA for 10 times, and record the averaged consensus error among agents and the averaged mean absolute TD error. Figure \ref{fig:iid} presents the results of i.i.d. setting. We can observe from Figure 1 that
PP‑DTD with decaying step‑sizes attains the lowest consensus and value estimation errors. This advantage may arise from two design features: a push‑pull structure that corrects weight imbalance in directed communication graphs, and a hybrid variance reduction technique that reduces variance. On the other hand, PP‑DTD with constant step‑sizes achieves the fastest decrease initially and then fluctuates in the final phase. Therefore, the constant‑step‑size variant is well suited for applications that need rapid convergence and can tolerate a modest final error.   
Similarly, Figure \ref{fig:markov} record the averaged consensus error among agents and the averaged absolute TD error, under the Markovian setting.
In Figure \ref{fig:markov}, both PP‑DTD (with projection radius $\mathcal{R}=5$) and Push‑SA achieve comparable performance to  the i.i.d. setting. Moreover, PP‑DTD exhibits more stable performance across different network sizes and attains significantly lower consensus and TD errors. This stable behavior and better final error are attributed to the projection step, which controls the bias introduced by Markovian sampling.

\section{Conclusion}
\label{sec:conclusion}

In this work, we propose a TD learning based algorithm, named PP-DTD, for the policy evaluation problem in MARL over directed communication networks. We show that PP-DTD achieves a non-asymptotic convergence rate of $\mathcal{O}({T}^{-1/2})$ under constant step-sizes and $\mathcal{O}({T^{-1}})$ under decaying step-sizes if the sample is independent and identically distributed  or  Markovian. Experiments on cooperative navigation tasks confirm that PP-DTD demonstrates favorable convergence performance and scalability and robustness. Future work will explore uncertainty quantification for PP-DTD, extend PP-DTD to general TD($\lambda$) learning,  and develop efficient policy gradient-based algorithms for MARL over directed communication graphs. 

\section*{Acknowledgment}

 The research is supported by the NSFC \#12471283, the NSFC \#12401418,  and  Fundamental Research Funds for the Central Universities DUT24LK001.

\footnotesize
\bibliographystyle{IEEEtran}
\bibliography{mybib}

\end{document}